\newtheorem{thm}{Theorem}[section]
\newtheorem{cor}[thm]{Corollary}
\newtheorem{lem}[thm]{Lemma}
\newtheorem{defn}[thm]{Definition}
\newtheorem{rem}[thm]{Remark}
\newtheorem{exm}[thm]{Example}
\numberwithin{equation}{section}
\newcommand{\norm}[1]{\left\Vert#1\right\Vert}
\newcommand{\abs}[1]{\left\vert#1\right\vert}
\newcommand{\mean}[1]{\mathbb{E}\lbrack #1\rbrack}
\newcommand{\set}[1]{\left\{#1\right\}}
\newcommand{\Real}{\mathbb R}
\newcommand{\bN}{\mathbb{N}}
\newcommand{\cA}{\mathcal{A}}
\newcommand{\cG}{\mathcal{G}}
\newcommand{\cN}{\mathcal{N}}
\begin{document}

%\title{Convergence rate of universal approximation of  continuous functionals and operators arising from differential equations}
\title[Convergence rate of DeepONets]{Convergence rate of {DeepONets} for learning operators arising from advection-diffusion equations}
\author[Deng et al.]{Beichuan Deng and Yeonjong Shin  and Lu Lu and Zhongqiang Zhang and George Em Karniadakis}
\address{Department of Mathematical Sciences, Worcester Polytechnic Institute, Worcester, MA. Email address: bdeng2@wpi.edu,\,\, zzhang7@wpi.edu.  }
\address{Department of Mathematics, Massachusett Institute of Technology, MA.  Email address: lu\_lu@mit.edu.  }
\address{Division of Applied Mathematics, Brown University, Providence, RI. 	 Email address: yeonjong\_shin@brown.edu \, george\_karniadakis@brown.edu.  }
\date{\today}

% -----------------------------------------------------------------------------------------------------------------
\begin{abstract}
We present convergence analysis of operator learning in [Chen and Chen 1995] and [Lu et al. 2020], 
where continuous operators are approximated by a sum of products of branch and trunk networks. 
In this work, we consider the rates of learning solution operators from both linear and nonlinear advection-diffusion equations with or without reaction.  We find that the convergence rates depend on the architecture of branch networks as well as the smoothness of inputs and outputs of solution operators.  % 
\end{abstract}	
\maketitle
% ----------------------------------------------------------------------------------------------------------------- 
% -----------------------------------------------------------------------------------------------------------------

\section{Introduction}\label{intro} 
 Neural networks have been widely explored for solving differential equations, e.g., in  
 \cite{BerNys18,EYu18,RaiKar18,RaiPK19,KharazmiZK20} and many subsequent papers.  
 % \cite{,,mao2020physics,khoo2019solving,SamANe20,liao2019deep,LiTTL19}%
 In these works,  solutions to differential equations are approximated by neural networks. Neural networks are thought of as alternatives to splines, orthogonal polynomials, or hp-finite elements bases.  One key advantage of neural networks is the capacity to approximate arbitrary continuous functions on compact domains. 
 However,  training neural networks are performed for differential equations with fixed inputs, such as initial conditions, boundary conditions, forcing, and coefficients. If one input is changed, the training process has to be repeated.  It is difficult to obtain outputs in real-time for  multiphysics systems  that require various sets.

 To overcome this difficulty, one can use operator learning, in which a fixed-weighted/pre-trained network approximates a continuous operator from the input(s) to the output(s), see, e.g., \cite{chen1995universal,li2020fourier,LuJinPange2021,SandbergXu1997}.  
In the seminal work  \cite{chen1995universal}, the authors approximate a continuous nonlinear operator by a network,  which is a summation of products of two two-layer networks with fixed weights. The idea is to approximate the basis expansions of operators in separable Banach spaces. Let $\cG(\cdot)$ be the operator of interest, which is represented using a  Schauder basis $\set{e_k}$ in the Banach space by 
$\cG(u)(y)= \sum_{k} c_k(\cG(u)) e_k(y)$, where $ c_k(\cdot)$ is a linear functional. 
Then,  the Schauder basis $e_k$ is approximated by a two-layer network (called trunk network in \cite{LuJinPange2021}). 
The   functional $ c_k(\cG(u))$ can   be  approximated by 
a two-layer network (called a branch network in \cite{LuJinPange2021}) as   $ c_k(\cG(u))$ can be first approximated by a continuous function $ c_k(\cG(\ell(\mathbf{u}_m)))$ with $\ell (\mathbf{u}_m)$ being an approximation of $u$.  
%The key perspective here is  that $ c_k(\cG(\ell(\mathbf{u}_m)))$ is a continuous function in $\mathbf{u}_m$.  
In \cite{BacKChen02}, the idea of operator learning has been extended to parameterized multiple operators. 
In \cite{LuJinPange2021}, the two-layer networks are replaced by multi-layer networks (see also Theorem \ref{thm:universal-apprx-operator} below), where the networks are named DeepONets. In \cite{LuJinPange2021}, many numerical examples of learning both explicit and implicit operators are presented,  demonstrating the efficiency of DeepONets. The implicit operators include  solution operators from nonlinear ordinary differential equations and advection-diffusion-reaction equations.

A similar operator learning approach was developed for dynamical systems in \cite{SandbergXu1997a,SandbergXu1997}. 
In  \cite{li2020fourier}, 
a different way to construct the networks is proposed for solution operators from partial differential equations. Instead of summations of products of neural networks, feedforward multi-layer networks are used where nonlinear kernels are applied in layers to accommodate infinite-dimensional inputs. 
In \cite{guss2019universal}, some theoretical results have been presented on how to use convolutional neural networks for operator learning, but no computational experiments were performed.

A neural network operator learning method  includes two steps:  
a)  solve the given differential equation numerically with various inputs or collect observation data  and train the network (offline),
b) update the training results with the corresponding new inputs without re-training (online).
%}
Since the network resulting from a) has fixed weights, Step b) can be done efficiently as only evaluations are needed.  
However, the cost of Step a) is high. It is unclear how the cost depends on the training data, e.g., the number of inputs and the number of points/parameters needed to represent one input.

In this work, we focus on the dependence on the number of points/parameters needed to represent one input (approximation theory only) while we do not consider training/optimization error for neuron networks.  Specifically, we discuss
 the convergence in the number of branch and trunk networks and the sizes of each branch and trunk network  in DeepONets, and the parameterization of the input $u$, see Theorem \ref{thm:universal-apprx-operator} for the networks in DeepONets. 
The main difficulty of the analysis for  \cite{chen1995universal,LuJinPange2021} is that the approximation $ c_k(\cG (\ell(\mathbf{u}_m)))$  is a high dimensional function ($m$ is large) in general. Although there are a few theoretical results for approximating high-dimensional continuous functions, they are not sufficient to show the operator learning's superiority.    If such functions  are only Lipschitz continuous, one may encounter the curse of dimensionality as the convergence rate is of the form 
$(\cdot)^{-1/m}$, 
 %increasing with the dimensionality of the vector space, 
 see, e.g., \cite{mhaskar1997neural} and  Theorem \ref{thm:deep-o-nets-error} in this work.  If such functions are analytic, we may break the curse of dimensionality; see, e.g., \cite{OpsSZ2020} for high dimensional functions. Unfortunately,  these functions (functionals) are not analytic in most cases, and only limited smoothness can be assumed, see, e.g., \cite{chen1993approximations,Chen1998,mhaskar1997neural,Sandberg1992,Sandberg96}.

Instead of investigating the smoothness only, we observe  that 
many solution operators from differential equations admit special structures such that no so-called curse of dimensionality (concerning the nominal dimension $m$) can occur. For example,  we show in Section \ref{chap4_burgers_advection} that one may use rational functions and ReLU  neural networks to approximate solution operators from Burgers and linear advection-diffusion equations. Consequently,   realistic convergence rates are obtained without curse of dimensionality for $\mathbf{u}_m$ (nominal dimension $m$), see e.g., in Theorems \ref{thm:burgersoprtest1} and \ref{thm:1dadvecnetworkerr}.  We also design    branch networks with  special structures for solution operators from linear advection-diffusion-reaction equations. The designed branch networks admit ``blessed presentations" \cite{mhaskar2016deep}. The key idea is to use a finite difference scheme to approximate  a solution and utilize an appropriate iterative solver to represent the numerical solution  explicitly; see Section \ref{sec:2d-reaction-diffusion}. A different approach of proving the convergence rates of DeepONets for analytic operators is presented in \cite{lanthaler2021error} using reproducing Hilbert spaces.

 The rest of the paper is organized as follows.  In Section \ref{sec:thm:operator}, we present an universal approximation theorem for operators and discuss the ideas of deriving  convergence rates. We also present the main results of the work in this section and  present the proofs in Sections \ref{chap4_burgers_advection} and  \ref{chap5_proofs}. 
 In Section \ref{sec:error-deeponets-bound-generic}, we present a  generic convergence rate where the operator is assumed to be H\"older continuous. 
 In Section \ref{chap4_burgers_advection}, we present the proofs for Burgers and steady-state linear advection-diffusion equations. We show that the branch networks are approximated by deep ReLU (rectified linear unit) neural networks via rational polynomials.  In the appendix, we present some key technical lemmas and theorems needed in our proofs. 

%-----------------------------------------------------------------------------------------------------------------
% ----------------------------------------------------------------------------------------------------------------

% -----------------------------------------------------------------------------------------------------------------

\section{Main results} \label{sec:thm:operator}

 In this work, a convergence rate is described by the necessary capacity of the neural network to achieve the given accuracy $\varepsilon$. Let $\mathcal{N}(x;\theta)$ be a neural network, where $x$ denotes the input(s) and $\theta$ denotes the parameters (weights and biases). We will use the following notations to describe the network capacity unless otherwise specified: \\
a) The size of $\mathcal{N}$ is the total number of nonzero parameters (weights and biases), denoted by $| \theta |$. \\
b) The width of $\mathcal{N}$ is the number of neurons in each layer, denoted by $N_{\mathcal{N}}$.\\
c) The depth of $\mathcal{N}$ is the number of layers, denoted by $L_{\mathcal{N}}$.

In this section, we present the universal approximation theorem from   \cite{LuJinPange2021} and discuss issues in proving convergence rates for DeepONets. We also present the  main results of this work.

\begin{thm}[\cite{LuJinPange2021}] \label{thm:universal-apprx-operator}
	%Let $X$ be a Banach space and 
Let $K_1 \subset \Real^{d_1}$ be a compact set. Let $V$ be a compact set in $C(K_1)$, $K_2 \subset \mathbb{R}^d$ be a compact set and $Y = C(K_2)$. 
	Assume that $\cG:\, V\to Y$ is a nonlinear continuous operator.  
	%
	%Then for any $\epsilon>0$, there are positive integers $n$, $p$, $m$, constants $c_i^k \in \mathbb{R}$, neural networks $f^{\mathcal{N}}(\cdot;\theta^{(k)})$ and $g^{\mathcal{N}}(\cdot;\Theta^{(k,i)})$, $x_j \in K_1$, $i=1,\dots,n$, $k=1,\dots,p$, $j=1,\dots,m$, such that
	 Then for any $\epsilon>0$, there are positive integers  $p$ and $m$,  neural networks $f^{\mathcal{N}}(\cdot;\theta^{(k)})$ and $g^{\mathcal{N}}(\cdot;\Theta^{(k)})$, $x_j \in K_1$,  $k=1,\dots,p$, $j=1,\dots,m$, such that
	\begin{equation}\label{branchtrunkformula}
	\left|\cG(u)(y) - \cG_{\bN} (\mathbf{u}_m)
	\right|<\epsilon , \quad \cG_{\bN} (\mathbf{u}_m)=\sum_{k=1}^p
 	\underbrace{ g^{\mathcal{N}}(\mathbf{u}_m;\Theta^{(k )})}_{branch}
%	\underbrace{\sum_{i=1}^n c_i^k g^{\mathcal{N}}(\mathbf{u}_m;\Theta^{(k,i)})}_{branch}
	\underbrace{f^{\mathcal{N}}(  y;\theta^{(k)})}_{trunk}
	\end{equation}
	holds for all $u \in V$ and $y \in K_2$, where $\mathbf{u}_m=(u(x_1), u(x_2),\cdots, u(x_m))^\top$. The neural networks $f^{\mathcal{N}}$ and $g^{\mathcal{N}}$ can be any class of functions that satisfy a classical universal approximation theorem  of continuous functions  on compact sets.  
	%; see Theorem \ref{thm:continuous-univ-apprx-nn}.
\end{thm}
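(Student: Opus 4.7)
\medskip

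\noindent\textbf{Proof proposal.} The plan is to follow the three-stage template used by Chen and Chen (1995): first compress the infinite-dimensional input $u$ to a finite vector $\mathbf u_m\in\Real^m$, then separate the $\mathbf u_m$-dependence from the $y$-dependence, then invoke the classical universal approximation theorem twice.

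\emph{Stage 1 (encoder).} Since $V\subset C(K_1)$ is compact, the Arzel\`a--Ascoli theorem gives that $V$ is uniformly equicontinuous. For any $\eta>0$ I would choose $x_1,\dots,x_m\in K_1$ forming a fine enough net so that a continuous reconstruction map $\ell:\Real^m\to C(K_1)$ (e.g.\ a partition-of-unity/piecewise-linear interpolation) satisfies $\|u-\ell(\mathbf u_m)\|_{C(K_1)}<\eta$ uniformly in $u\in V$. By uniform continuity of $\cG$ on the compact set $V\cup\ell(\{\mathbf u_m:u\in V\})$, choosing $\eta$ small enough yields $\|\cG(u)-\cG(\ell(\mathbf u_m))\|_{C(K_2)}<\epsilon/3$ uniformly in $u\in V$. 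This reduces the problem to approximating $(\mathbf v,y)\mapsto \widetilde\cG(\mathbf v,y):=\cG(\ell(\mathbf v))(y)$, which is continuous on the compact product $U_m\times K_2$ where $U_m:=\{\mathbf u_m:u\in V\}\subset\Real^m$.

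\emph{Stage 2 (separation of variables).} On the compact metric space $U_m\times K_2$, Stone--Weierstrass lets me approximate $\widetilde\cG$ uniformly within $\epsilon/3$ by a finite sum of products $\sum_{k=1}^p c_k(\mathbf v)\,e_k(y)$ with $c_k\in C(U_m)$ and $e_k\in C(K_2)$; equivalently one can use a Schauder basis $\{e_k\}$ of $C(K_2)$ and truncate, provided the coefficient functionals $c_k$ are shown to be continuous, which follows from continuity of $\widetilde\cG$ jointly with continuity of the basis-coefficient projections on the compact image $\widetilde\cG(U_m)\subset C(K_2)$. Either route produces the desired bilinear form.

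\emph{Stage 3 (double universal approximation).} Now each trunk function $e_k$ is a continuous function on the compact set $K_2\subset\Real^d$, so by the hypothesis that the network class satisfies a classical universal approximation theorem, I can pick $f^{\mathcal N}(\cdot;\theta^{(k)})$ with $\|e_k-f^{\mathcal N}(\cdot;\theta^{(k)})\|_{C(K_2)}<\delta$. Similarly each branch function $c_k$ is continuous on the compact $U_m\subset\Real^m$, so I can pick $g^{\mathcal N}(\cdot;\Theta^{(k)})$ with $\|c_k-g^{\mathcal N}(\cdot;\Theta^{(k)})\|_{C(U_m)}<\delta$. A telescoping estimate
\[
\bigl|c_k e_k - g^{\mathcal N}f^{\mathcal N}\bigr|\le \|c_k\|_\infty\,\delta+\|e_k\|_\infty\,\delta+\delta^2
\]
bounds the aggregate error by $C(p)\delta$; choosing $\delta$ small so that $C(p)\delta<\epsilon/3$ and combining with Stages~1--2 yields the claimed $\epsilon$-approximation.

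\emph{Main obstacle.} The conceptually delicate step is Stage~2: one must produce a \emph{separable} (sum-of-products) approximation rather than just a joint uniform approximation. Stone--Weierstrass on $U_m\times K_2$ furnishes this for free once continuity of $\widetilde\cG$ is established, but the cleaner Schauder-basis route requires verifying that the coefficient functionals $c_k(\cG(\ell(\cdot)))$ remain continuous on $U_m$, which hinges on the fact that $\cG\circ\ell$ lands in a compact subset of $C(K_2)$ where basis projections are equicontinuous. Once this is pinned down, Stages~1 and~3 are routine applications of Arzel\`a--Ascoli and of the classical universal approximation theorem, respectively.
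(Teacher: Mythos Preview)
Your argument is correct and essentially reconstructs the Chen--Chen (1995) proof together with the extension to arbitrary network classes. The paper, by contrast, does not give a self-contained proof: it cites \cite{chen1995universal} for the two-layer case and then observes that, since the two-layer branch and trunk networks produced there are themselves continuous functions on the compact sets $U_m$ and $K_2$, any network class satisfying the classical universal approximation theorem can approximate them uniformly; see \cite{LuJinPange2021}. In other words, the paper bootstraps from Chen--Chen as a black box, whereas you redo Stages~1--2 from scratch. Your route has the advantage of being self-contained and of making explicit where each $\epsilon/3$ comes from; the paper's route is shorter and avoids re-proving the encoder and separation steps. One small technical point worth tightening in your Stage~1: $\cG$ is only assumed defined on $V$, so writing ``uniform continuity of $\cG$ on $V\cup\ell(\{\mathbf u_m\})$'' presupposes an extension of $\cG$ off $V$. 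This is harmless---Dugundji's extension theorem (or the Tietze--Urysohn argument the paper invokes elsewhere) supplies a continuous extension to all of $C(K_1)$---but it should be stated rather than assumed.
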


\begin{rem}
	Here $C(K_1)$ can be replaced by  $L^p(K_1)$, $p\geq 1$ and $C(K_2)$ can be replaced by 
	$L^q(K_2)$, $q\geq 1$. But 
	$\mathbf{u}_m$ should be replaced by averaged values 
	$\mathbf{u}_m= ( \mathbf{u}_h (x_1), \ldots,\mathbf{u}_h (x_m))^\top$, where 
%	\begin{equation*}
$	\mathbf{u}_h (x_i)  = \int_{B(x_i,h)\cap K} u(t)\,dt/{\mu (B(x_i,h)\cap K)} $
%	\end{equation*}
	while $\mu$ is the Lebesgue measure and $B(x_i,h)$ is the ball centered at $x_i$ with radius $h$. More generally, these spaces can be replaced by 
	separable Banach spaces.  
\end{rem}

When the networks $f^{\cN}$ and $g^{\cN}$ have two layers, the theorem is proved in \cite{chen1995universal}. 
If more layers are involved,  the proof is based on the universal approximation theorem for continuous functions on compact sets
% in Theorem \ref{thm:continuous-univ-apprx-nn}   
and the fact that two-layer networks in \cite{chen1995universal} are continuous on compact sets, see \cite{LuJinPange2021}.

%
%Networks in this theorem 
Theorem \ref{thm:universal-apprx-operator} covers neural networks including feed-forward neural networks with non-polynomial activation functions (e.g.,  \cite{hornik1989multilayer}), radial basis network \cite{chen1995approximation}, convolutional neural networks, ResNets and more. The only requirement is that the networks admit the universal approximation of continuous functions on compact sets. 

\begin{rem}
It is possible to extend Theorem \ref{thm:universal-apprx-operator} to  discontinuous operators
 	that can be approximated by continuous operators,
	%because we can still approximate them by continuous operators, 
	e.g., in 
	\cite{SandbergXu98} for functionals.  The condition of compact set in $\Real^d$ may be also replaced by 
	$\Real^d$ if the input/output function decays fast enough at infinity.
\end{rem}

\subsection{Framework of analysis}
First of all, $\cG : V \to Y$ is approximated by an operator 
$\cG_{m,p}: V_m \to Y_p$, where $V_m\subset V$ and $Y_p \subset Y$ are both finite-dimensional subspaces.   Assume that the domains of $K_1$ and $K_2$ are cubes or  balls, where  approximations with convergence rates are well studied. If they are not cubes or balls, we use the Tietze-Urysohn-Brouwer extension theorem to 
  extend the operator such that  domains of input and output are cubes or balls (denoted by $D$).   We denote the resulting operator (with also extension of $V$ and image) by $\cG(u)$ if no confusion arises. 
Second, we apply universal approximation theorem for continuous function on compact sets -- there exist neural networks  that can approximate the possibly high-dimensional  function
$\cG_{m,p}(\mathbf{u}_m)$ sufficiently well -- to obtain the existence of neural network and analyze the convergence rate with respect to  the size of branch networks further.

\begin{rem} 
	How to choose the spaces $V$, $V_m$, $Y$ and $Y_p$ is essential to determine the convergence rate.
	Here are some typical choices of these spaces. 
	\begin{table}[!h]
		\centering
		\caption{Here  RKHS refers to reproducing Kernel Hilbert space. }
		\begin{tabular}{l|l|l  }  \hline 
			$V$ or $Y$   &  $V_m$ or $Y_p$ &       Ref.  \\ \hline
			$L^p $ &  piecewise polynomials	   & this work   \\ \hline
			$L^q $ & truncated  Fourier space  &     \cite{li2020fourier}	         \\\hline
			RKHS  & truncated RKHS (first $N$ term of the basis) &  \cite{lanthaler2021error}  \\ \hline		
		\end{tabular}

	\end{table}
	It is important to choose the best possible choices to parameterize the input and output as they determine the dimensionality of the  input and output of the operator $\cG_{m,p}(\mathbf{u}_m)$. 	However,  we only consider two choices  of piecewise polynomials and truncated Fourier space in this work. 
\end{rem}

Below is one way to obtain such $\cG_{m,p}$, using   piecewise constant interpolation and orthonormal expansion. 
\begin{eqnarray}\label{erroridea}
\cG(u )
&  \approx &  \cG(\mathcal{I}_{m,x}^0u)   \quad\text{(piecewise constant interpolation)}\nonumber\\ 
&\approx & \sum_{k=1}^p   \int_{D}  	 \cG(\mathcal{I}_{m,x}^0u) \, e_k(y)\,dy\, e_k(y)  ,\quad (K_2\subset D, \, \text{spectral expansion}) \nonumber\\
&\approx&    \sum_{k=1}^p 
\int_{D}  \mathcal{I}_{n,y} \Big(	\cG(\mathcal{I}_{m,x}^0u)(y) \big)  e_k(y)\,dy \,  e_k(y)\quad  \text{ (interpolation})  \nonumber\\
&= &   \sum_{k=1}^p \Big(
\sum_{i=1}^n   	\cG(\mathcal{I}_{m,x}^0u)(y_i)   \underbrace{\int_{D}  e_k(y)\phi_i(y)\,dy}_{c_{i}^k} \Big) \, e_k(y) =: \cG_{m,p} (\mathbf{u}_{m}).
\label{gmpdef}
\end{eqnarray}  
Here, $\mathbf{u}_{m} = (u(x_1), \cdots, u(x_m))^\top$, and $y_i$, $\phi_i$ are quadrature points and basis functions with respect to $ \mathcal{I}_{n,y}$.   
Observe that $	\cG(\mathcal{I}_{m,x}^0u)(y_i)$ is  continuous with respect to $\mathbf{u}_{m}$, on  $[-M,M]^{m}$,  $M= \max_{ 1\leq  i\leq m} \sup_{u\in V}\abs{u(x_i)}$. Then  by the universal approximation theorem, 
% $\bullet$ As  $\cG: V\to C(D)$ is continuous, 
we have the following  uniform approximation     
\begin{eqnarray}\label{brancherror}
\sup_{u\in V}\sup_{\mathbf{u}_m \in [-M,M]^{m}}\abs{	\cG(\mathcal{I}_{m,x}^0u)(y_i) -g^{\mathcal{N}}( \mathbf{u}_m;\Theta_{y_i})  }<\epsilon.
\end{eqnarray}
Since the  basis $e_k(y) $ can be well approximated by a neural network $ f^{\mathcal{N}}({ y}; \theta^{(k)})$, the convergence can be established. Another way to obtain $\cG_{m,p} (\mathbf{u}_{m})$
is using interpolations only:  
$\cG(u )
   \approx 	\sum_{k=1}^p\cG(\mathcal{I}_{m,x}^0u)(y_k) \phi_k(y) $. 

%	By the facts that  $	\cG(\mathcal{I}_{m,x}^0u)(y_i) $ is a continuous function in $\mathbf{u}_{m}$ and $e_k(y)$ is continuous, one obtain the conclusion via  universal approximation theorem of continuous functions.

When the operator $\cG$ is H\"older continuous, we show the convergence rate of the 
DeepONets in Theorem \ref{thm:deep-o-nets-error}.  While for  analytic operators, exponential convergence can be obtained  \cite{lanthaler2021error}. We remark that 
  the H\"older continuity leads to slow convergence rates.  Moreover,  many solution operators may not be analytic. In addition to the smoothness of the operators, the structure of the approximate or analytic solution is also important to the convergence rates of DeepONets for these operators.  For solution operators from Burgers equations and   steady linear advection-reaction-diffusion equations, we show better convergence rates below.
  We will observe that the rate of convergence will heavily depend on the nature of the problem and the formulation to obtain  $\cG(\mathcal{I}_{m,x}^0u)(y_i)$, analytically or numerically.

 In what follows, we consider that the input is piecewise constant or linear. By the assumption of H\"older continuity \eqref{eq:operator-holder-continuity},  
$\norm{\cG(v) -\cG(\mathcal{I}_{m,x}^0 v)}_Y\leq C \norm{v-I_{m,x}^0 v}_X^\alpha$, where 
$\alpha=1$ holds for examples  below  with corresponding $X$ and $Y$ in these  examples.  
\subsection{1D Burgers equation}

Consider the 1-D Burgers equation with  periodic boundary condition 
\begin{eqnarray}
\left\{\begin{array}{ll}
u_t + u  u_x = \kappa u_{xx}, \ \ (x,t) \in \Real \times(0,\infty),\ \ \kappa>0,\\ 
u(x-\pi,t) = u(x+\pi,t ), \quad
u(x,0)=u_0(x).
\end{array}\right.
\label{eqtburgers}
\end{eqnarray}
Let $M_0,\,M_1>0$. Define
\begin{eqnarray}
 \mathcal{S} = \mathcal{S}(M_0, M_1) := \{ v\in W^{1,\infty}(-\pi,\pi): \norm{v}_{L^{\infty}} \leq M_0, \| \partial_x v\|_{L^{\infty}} \leq M_1, \bar{v}:=\int_{-\pi}^{\pi}v(s)\,ds = 0 \}.
\end{eqnarray}
We consider $u_0 \in \mathcal{S}$ to be a piecewise linear function, i.e., 
$u(x,0)=u_0(x) = \sum_{j=0} ^{m-1} u_{0,j} L_j(x)$,
where  
$-\pi = x_0 < x_1<\cdots <x_m = \pi$, $u_{0,j} = u_0(x_j)$, and $L_j(x)$ is the piecewise linear nodal basis supported on the sub-interval $[x_{j-1}, x_{j+1})$, $h_j=  x_j - x_{j-1} $ and $\max_{1\leq j \leq m} h_j  \leq h$. Let $\mathbf{u}_{0,m} = (u_{0,0}, u_{0,1},\cdots, u_{0,{m-1}})^\top$.

Here is the main result on the convergence rate of DeepONets approximating the solution operator of the 1-D Burgers equation with periodic boundary condition \eqref{eqtburgers}.

\begin{thm}[\textbf{1D Burgers equation with periodic boundary}]\label{thm:main_burgers}
Let $u_0\in \mathcal{S}$ be a piecewise linear function. Let $\mathcal{G}(u_0)$ be the solution operator of the Burgers equation 
\eqref{eqtburgers}. Then, there exist  ReLU branch networks $g^{\mathcal{N}}(\mathbf{u}_{0,m}; |\Theta^{(i)}|)$of size $| \Theta^{(i)} | = \mathcal{O}(m^2 \ln(m))$ for $i=1,\cdots, p$, % and a trunk network $f^{\mathcal{N}}( x ;\theta^{(k)})$ of size $|\theta^{(k)}|=\mathcal{O}((2\ln p)^2)$
% and depth $L_{f_1^{\mathcal{N}}}= \mathcal{O}(|\theta^{(k)}|^{\frac{1}{2}}\ln(|\theta^{(k)}|))$ for $k=1,\cdots, p$, such that 
%\begin{eqnarray*}
%\norm{\cG (u_0 )-\cG_{\bN}(\mathbf{u}_{0,m} ) }_{L^2} \leq C\Big( p^{-1} + m^{-1} +\abs{\Theta^{(i)}}^{-\frac{1}{2}+\epsilon} +  \exp(-\frac{1}{2}|\theta^{(k)}|^{\frac{1}{2}}) \Big),
%\end{eqnarray*}
%and  ReLU trunk networks $f^{\mathcal{N}}( x ;\theta^{(k)})$ of size $|\theta^{(k)}| = 3$, $k=1,\cdots, p$, such that 
and  ReLU trunk networks $f^{\mathcal{N}}( x ;\theta^{(k)})$ having width $N_{f_{\cN}}=3$ and depth $L_{f_{\cN}}=1$, $k=1,\cdots, p$, such that 
\begin{eqnarray*}
\norm{\cG (u_0 )-\cG_{\bN}(\mathbf{u}_{0,m} ) }_{L^{\infty}} \leq C \Big( p^{-1} + m^{-1} +\abs{\Theta^{(i)}}^{-\frac{1}{2}+\epsilon}  \Big),
\end{eqnarray*}
where   $\cG_{\mathbb{N}}(\mathbf{u}_{0,m})$  is  of the form in \eqref{branchtrunkformula}, $\epsilon>0$ is arbitrarily small and $C >0$ is independent of $m$, $p$, $|\Theta^{(i)}|$ and the initial condition $u_0$.  
\end{thm}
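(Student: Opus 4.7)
The plan is to exploit the Cole--Hopf transformation. Since $u_0$ has zero mean and is piecewise linear, the function $v_0(x)=\exp\bigl(-\frac{1}{2\kappa}\int_0^x u_0(s)\,ds\bigr)$ is periodic, strictly positive, and piecewise of the form $\exp(\text{quadratic})$. The Burgers solution is $u(x,t)=-2\kappa\,\partial_x\log v(x,t)$, where $v$ solves the linear periodic heat equation with initial datum $v_0$. Representing $v$ via convolution with the periodized Gaussian kernel and splitting the convolution into $\mathcal{O}(m)$ sub-integrals over the nodal subintervals yields an explicit closed-form expression
\[
u(y_k,T)=\frac{\sum_{j=0}^{m-1} A_j(y_k,T;\mathbf{u}_{0,m})}{\sum_{j=0}^{m-1} B_j(y_k,T;\mathbf{u}_{0,m})},
\]
where each $A_j,B_j$ is a smooth function of $y_k$, $T$, and only a bounded number of components of $\mathbf{u}_{0,m}$, built from exponentials and error functions of affine forms in the nodal values. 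This reduces the operator learning problem to approximating this structured ratio of sums by ReLU networks.

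For the trunk networks, I fix the evaluation time $T$, choose a uniform grid $\{y_k\}_{k=1}^p\subset[-\pi,\pi]$, and take $f^{\mathcal{N}}(y;\theta^{(k)})$ to be the nodal hat function $\phi_k$, which admits the exact three-ReLU representation $\phi_k(y)=h^{-1}\bigl(\sigma(y-y_{k-1})-2\sigma(y-y_k)+\sigma(y-y_{k+1})\bigr)$ of width $3$ and depth $1$. The parabolic smoothing of $v$ for $t>0$ together with the $L^\infty$ bound on $u_0$ gives Lipschitz regularity of $x\mapsto u(x,T)$, so piecewise linear interpolation at the $p$ nodes yields an $L^\infty$ error of $\mathcal{O}(p^{-1})$, which is the source of the $p^{-1}$ term.

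For each branch network $g^{\mathcal{N}}(\cdot;\Theta^{(k)})$ I must approximate $F_k(\mathbf{u}_{0,m}):=u(y_k,T)$. I would assemble it in four stages: (i) standard Yarotsky-type ReLU sub-networks approximate $\exp$ and $\mathrm{erf}$ on bounded intervals to smoothness-controlled accuracy; (ii) the $\mathcal{O}(m)$ local contributions $A_j,B_j$ are formed in parallel, each acting on only the handful of components of $\mathbf{u}_{0,m}$ on which it depends; (iii) a ReLU reciprocal sub-network approximates $z\mapsto 1/z$ on the range of the denominator, which is a positive sum of exponentials and is uniformly bounded below by a constant depending only on $M_0,M_1,T,\kappa$; (iv) a ReLU multiplication sub-network produces the final ratio. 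The locality of each $A_j,B_j$ keeps the aggregate branch size to $\mathcal{O}(m^2\ln m)$, while the standard ReLU approximation rates for smooth univariate functions, the reciprocal, and multiplication combine to give the rate $|\Theta^{(i)}|^{-1/2+\epsilon}$ as the branch capacity grows. The $\mathcal{O}(m^{-1})$ term arises from the grid scale $h=\mathcal{O}(m^{-1})$ controlling the piecewise decomposition of the Cole--Hopf integrals, combined with the $L^\infty$ Lipschitz stability $\|\mathcal{G}(u_0)-\mathcal{G}(\widetilde u_0)\|_{L^\infty}\leq C\|u_0-\widetilde u_0\|_{L^\infty}$ for initial data in $\mathcal{S}$. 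A triangle-inequality combination of the three error sources then yields the stated bound.

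The main obstacle is steps (iii)--(iv) of the branch construction: approximation errors propagate multiplicatively through division, so one must prove a quantitative uniform lower bound on the denominator and carefully calibrate the per-component tolerances of the $\exp$, $\mathrm{erf}$, reciprocal, and multiplication sub-networks so that the target rate $|\Theta|^{-1/2+\epsilon}$ is preserved in the composition. Equally important is checking that the total size grows only polynomially in $m$ (indeed as $\mathcal{O}(m^2\ln m)$) rather than exponentially; this is precisely what the locality structure of the Cole--Hopf decomposition buys us and what blocks the nominal-dimension curse of dimensionality for a target function of $m$ variables.
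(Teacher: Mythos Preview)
Your trunk construction (hat functions via three ReLUs, width $3$, depth $1$) and the resulting $p^{-1}$ interpolation error match the paper exactly, and the overall triangle-inequality decomposition is the right scaffold.

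The gap is in the branch construction. Your locality claim---that each $A_j,B_j$ depends on only a bounded number of components of $\mathbf{u}_{0,m}$---is false. On $[x_{j-1},x_j)$ one has $v_0(y)=v_0(x_{j-1})\exp\bigl(-\frac{1}{2\kappa}\int_{x_{j-1}}^{y}u_0\bigr)$, and the prefactor $v_0(x_{j-1})=\exp\bigl(-\frac{1}{2\kappa}\sum_{i\le j-1}\frac{u_{0,i}+u_{0,i-1}}{2}h_i\bigr)$ depends on \emph{all} of $u_{0,0},\dots,u_{0,j-1}$. So your stated reason for the $\mathcal{O}(m^2\ln m)$ size does not hold. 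Relatedly, your account of the $m^{-1}$ term is off: since $u_0$ is already piecewise linear there is no input-interpolation error, and your ``exact'' splitting into erf-type integrals introduces no $\mathcal{O}(h)$ error either, so the Lipschitz-stability argument you invoke has nothing to act on.

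The paper's branch argument avoids erf, reciprocal, and multiplication sub-networks altogether. It first replaces $v_0$ in the numerator and denominator of the Cole--Hopf formula by its piecewise linear/constant interpolant (Theorem~\ref{thm:oprterror1d}), which costs $\mathcal{O}(h)$ and turns both integrals into linear combinations of the nodal values $v_0^j=\prod_{i\le j}V_i$, $V_i=\exp\bigl(-\frac{u_{0,i}+u_{0,i-1}}{4\kappa}h_i\bigr)$, with coefficients depending only on $(y_k,t)$. A second step linearizes $V_i\approx l_i^V:=1-\frac{u_{0,i}+u_{0,i-1}}{4\kappa}h_i$ at cost $\mathcal{O}(h)$ (Lemma~\ref{lem:rtnfuncerr}), after which $\mathcal{G}_m$ is a genuine rational function of $\mathbf{u}_{0,m}$: numerator and denominator are $m$-variable polynomials of degree $m$ with $m$ monomials each. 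Telgarsky's theorem on ReLU approximation of rational functions (Theorem~\ref{thm:rational-apprx-ReLU}) then yields a branch network of size $\mathcal{O}(m^2\ln m)$ achieving accuracy $\varepsilon=h\sim m^{-1}$. In particular, the $m^{-1}$ and $|\Theta^{(i)}|^{-1/2+\epsilon}$ terms in the statement are not independent error sources but the same quantity expressed in two ways under the coupling $|\Theta^{(i)}|\sim m^2\ln m$.
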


\begin{rem}
If the    initial condition $u_0(x)$ has the average $\bar{u}_0\neq 0$,  we write the solution  
	$u(x,t):= v(x-\bar{u}_0 t, t) +\bar{u}_0$, where $v$  satisfies the Burgers equation \eqref{eqtburgers} with the initial condition  of zero average  $u_0-\bar{u}_0$.  %Therefore, we always suppose the initial conditions have zero mean for Burgers equations with periodic boundary condition in Section \ref{chap4_burgers_advection}.
\end{rem}

If the Burgers equation is equipped with Dirichlet boundary condition, or with a forcing term, or in two dimensions, similar conclusions still hold.  See Section \ref{chap4_burgers_advection} for details.

% ---------------------------------------------------------------------
\subsection{1D advection-diffusion equations}
Consider the following 1D advection-diffusion equation with Dirichlet boundary condition:
\begin{eqnarray}\label{1dadvection}
\left\{\begin{array}{ll}
-u_{xx} + a(x)u_x = f(x), \ \ x \in (0,L), \,0<L<\infty,\\
u(0) = u(L) = 0. \\
\end{array}\right.
\label{eqtadv}
\end{eqnarray}
where $a(x)$, $f(x) \in L^{\infty}(0,L)$. Since $u(x)$ depends linearly on $f(x)$,   we are mainly concerned that how $u(x)$ depends on $a(x)$. Hence, we set $a(x)$ as the only input to the operator  only and fix $f(x)$. Let $M_0>0$. Define
%\mathcal{S}$, given that, for $M_0$, $M_1>0$,
\begin{eqnarray}
 {\Sigma} ={\Sigma}(M_0) := \{ a(x) \in L^{\infty}(0,L) : \| a\|_{L^{\infty}} \leq M_0 \}.
\end{eqnarray}
  Suppose that  $a \in \mathcal{S}$ is piecewise constant, i.e.,  
%\begin{eqnarray*}
$	a(x) = \sum_{j=1}^m a_j  \, \chi_j(x) $, 
%%\ \ \hbox{and} \ \  f(x) = \sum_{j=1}^m f_j \,  \chi_j(x)$,
%\end{eqnarray*}
where $0 = x_0 < x_1<\cdots <x_m = L$, and $\chi_i(x)$ is the characteristic function on the sub-interval $(x_{j-1}, x_j)$, i.e., $\chi_i(x)=1$ if $x\in (x_{j-1}, x_j)$ and is otherwise $0$. %If they are defined on different sub-intervals, one can refine their definitions by taking the intersections of the subintervals, in order to get the forms above. 
Let $\mathbf{a}_m = (a_1,\cdots,a_m)^\top$.% and $\mathbf{f_m} = [f_1,\cdots, f_m)^\top$.

Here is the main result on convergence rate of DeepONets for the problem \eqref{1dadvection}, whose proof can be obtained by using Lemma \ref{prop:1dadvecrationalerr} and following the idea of the proof of Theorem \ref{thm:main_burgers}. 
\begin{thm}[\textbf{1D advection-diffusion with Dirichlet boundary condition}]\label{thm:main_1dadvec}
	 Let $a \in \Sigma$ be piecewise constant. 
	For any given $f\in L^{\infty}$, let $\mathcal{G}^f(a)$ be the solution operator given by \eqref{1dellipticexcslt} .  Then, there exist ReLU branch networks $g^{\mathcal{N}}_f (\mathbf{a}_m; \Theta^{(i)})$  of size $|\Theta^{(i)}|= m^4 \ln(m)$ for $i=1,\cdots, p$,
	% a trunk network $f_1^{\mathcal{N}}( x ;\theta^{(k)})$ of size $|\theta^{(k)}| \geq (2\ln p)^{2}$ and depth $L_{f_1^{\mathcal{N}}} = \mathcal{O}(|\theta^{(k)}|^{\frac{1}{2}}\ln(|\theta^{(k)}|))$, $k=1,\cdots, p$, such that 
%\begin{eqnarray*}
%\norm{\cG^f (a )-\cG_{\bN}^f(\mathbf{a}_{m}) }_{L^2} \leq C_1\Big( p^{-1} + m^{-1} + \abs{\Theta^{(i)}}^{-\frac{1}{4}+\epsilon}+ \exp(-\frac{1}{2}|\theta^{(k)}|^{\frac{1}{2}}) \Big),
%\end{eqnarray*}
%and another 
%ReLU trunk networks $f ^{\mathcal{N}}( x ;\theta^{(k)})$ of size $|\theta^{(k)}| = 3$, $k=1,\cdots, p$, such that 
and  ReLU trunk networks $f^{\mathcal{N}}( x ;\theta^{(k)})$ of width $N_{f_{\cN}}=3$ and depth $L_{f_{\cN}}=1$, $k=1,\cdots, p$, such that 
\begin{eqnarray*}
\norm{\cG^f (a )-\cG_{\bN}^f(\mathbf{a}_{m}) }_{L^{\infty}} \leq C\Big( p^{-1} + m^{-1} + \abs{\Theta^{(i)}}^{-\frac{1}{4}+\epsilon} \Big),
\end{eqnarray*}
where $\cG_{\bN}^f(\mathbf{a}_{m})$ is  of the form in  \eqref{branchtrunkformula}, $\epsilon>0$ is arbitrarily small and $C >0$ is independent of $m$, $p$, $|\Theta^{(i)}|$ and $a(x)$.
\end{thm}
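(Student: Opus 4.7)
The plan is to follow the three-step decomposition \eqref{erroridea} that underlies the proof of Theorem \ref{thm:main_burgers}, specialized to the linear two-point boundary value problem \eqref{1dadvection}. First I fix the trunk basis: I would choose $\{e_k(y)\}_{k=1}^p$ to be piecewise linear hat functions on a uniform partition of $[0,L]$ with $p$ nodes. Each such hat is exactly representable by a ReLU network of width $3$ and depth $1$, matching the trunk architecture in the statement. Since $\cG^f(a)\in W^{1,\infty}(0,L)$ with a bound uniform in $a\in\Sigma$ (standard maximum-principle and $W^{2,\infty}$ estimates for \eqref{1dadvection}), hat interpolation at the $p$ nodes contributes $\mathcal{O}(p^{-1})$ to the total error.

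Next, for each trunk node $y_i$ I need to approximate the scalar functional $\mathbf{a}_m\mapsto \cG^f(a)(y_i)$ by a ReLU branch network $g^{\mathcal{N}}_f(\mathbf{a}_m;\Theta^{(i)})$. Because $a$ is piecewise constant with value $a_j$ on $(x_{j-1},x_j)$, the solution of \eqref{1dadvection} is an explicit linear combination of $1$ and $e^{a_j x}$ on each subinterval plus a particular solution involving $f$, and the $2m$ matching coefficients are determined by the two Dirichlet conditions together with $C^1$-continuity at the $m-1$ interior interfaces. Solving this $(2m)\times(2m)$ transmission system by Cramer's rule realizes $\cG^f(a)(y_i)$ as a rational function in the variables $\{a_j\}$ and $\{e^{a_j h_j}\}$. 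Lemma \ref{prop:1dadvecrationalerr} supplies the degree and magnitude bounds for this rational representation. I would then feed it into the ReLU approximation machinery for exponentials, products, and reciprocals that was used in the Burgers proof (Yarotsky-type building blocks assembled in series/parallel), yielding a branch network of size $|\Theta^{(i)}|=\mathcal{O}(m^4\ln m)$ and approximation accuracy $|\Theta^{(i)}|^{-1/4+\epsilon}$. The $\mathcal{O}(m^{-1})$ term accounts for the residual consistency error between $\cG^f(a)(y_i)$ and the specific algebraic representation that the branch network actually emulates (piecewise linear interpolation on the $m$-scale mesh of the interior solution, together with the Lipschitz dependence of $\cG^f$ on $\mathbf{a}_m$).

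I expect the main obstacle to be the sharp bookkeeping in Lemma \ref{prop:1dadvecrationalerr}: one must control the numerator degree, bound the Cramer denominator away from zero, and keep track of Lipschitz constants uniformly in $\mathbf{a}_m\in[-M_0,M_0]^m$, so that the subsequent ReLU approximation of products and divisions does not amplify errors catastrophically. The weaker exponent $\tfrac14$, compared with $\tfrac12$ in Theorem \ref{thm:main_burgers}, reflects the global coupling of all $m$ subintervals through the transmission system, which raises the effective degree of the rational representation and hence the depth and size demanded of the branch network. Once these bounds are in hand, the triangle inequality combines the trunk and branch estimates into the stated bound $C(p^{-1}+m^{-1}+|\Theta^{(i)}|^{-1/4+\epsilon})$, with $C$ independent of $m$, $p$, $|\Theta^{(i)}|$, and $a$.
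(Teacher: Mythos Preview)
Your high-level architecture matches the paper exactly: piecewise linear hat functions for the trunk (width $3$, depth $1$ ReLU), a rational ReLU emulation for each branch functional $\mathbf{a}_m\mapsto\cG^f(a)(y_i)$, and the same triangle-inequality splitting as in the proof of Theorem~\ref{thm:main_burgers}. Where you diverge from the paper is in \emph{how} you produce the rational representation of the branch functional.

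The paper does not use a transmission system or Cramer's rule. Instead it passes to the divergence form \eqref{1dadvectiondivform}, writes the solution via the double-integral formula \eqref{1dellipticexcslt}, and replaces $A^{\pm 1}$ by their piecewise constant interpolants to obtain $\cG_m^f$ in \eqref{ellipticrationaldef}. With the substitution $v_i=\exp(a_ih_i)$ one has $A^{\pm1}(x_j)=\prod_{k\le j} v_k^{\mp1}$, so $\cG_m^f(\mathbf{a}_m;x)$ is visibly rational in $(v_1,\ldots,v_m)$. The proof of Theorem~\ref{thm:1dadvecnetworkerr} then counts: numerator of degree $2m$ in $m$ variables with $\mathcal{O}(m^3)$ monomials, denominator $\mathcal{A}_-^N(\mathbbm{1})(L)$ of degree $m$ with $m$ terms. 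Linearizing $v_i\approx 1+a_ih_i$ (exactly as in the Burgers proof) and invoking Telgarsky's Theorem~\ref{thm:rational-apprx-ReLU} gives size $\mathcal{O}(m^4\ln m)$ and accuracy $\mathcal{O}(h)\sim m^{-1}\sim|\Theta|^{-1/4+\epsilon}$.

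You also misread Lemma~\ref{prop:1dadvecrationalerr}: it does not supply degree or denominator bounds for a Cramer representation; it is purely the consistency estimate $\|\cG^f(a)-\cG_m^f(\mathbf{a}_m)\|_{L^\infty}\le Ch$ for the integral-formula approximation \eqref{ellipticrationaldef}. Your Cramer route could in principle be made to work, but you would have to supply on your own the uniform lower bound on the $(2m)\times(2m)$ transmission determinant over $\mathbf{a}_m\in[-M_0,M_0]^m$, handle the degeneracy at $a_j=0$ (where the local fundamental pair is $\{1,x\}$ rather than $\{1,e^{a_jx}\}$), and redo the monomial count for determinant expansions. The paper's integral-formula route sidesteps all of this: the denominator $\mathcal{A}_-^N(\mathbbm{1})(L)$ is a sum of positive terms bounded below by $L\exp(-LM_0)$, and the degree count is immediate from the product structure of $A^{\pm1}(x_j)$.
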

%The proof of this theorem can be found at Section \ref{ssec:1dadv}.

% -----------------------------------------------------------------------------------------------------------------
\subsection{2D advection-reaction-diffusion equations}
Consider the following 2-D advection-reaction-diffusion equation with a given boundary condition 
\begin{eqnarray}\label{2dadvectionreactionpde}
\left\{\begin{array}{ll}
-\Delta u + \mathbf{a}\cdot  \nabla u + a_3(x,y) u = f, \quad \hbox{in} \ \Omega\subset \Real^d,\\
\mathcal{B}u = 0, \quad \hbox{on} \ \partial \Omega,
\end{array}\right.
\end{eqnarray}
where $ \mathbf{a} = [a_1(x,y), a_2(x,y)]$, $\Omega$ is a rectangular domain and $\mathcal{B}$ is one of the Dirichlet, Neumann or Robin boundary operator. Herein, we consider a finite difference discretization in order to obtain a numerical solution $U_N\in \Real^{m}$. Suppose that $S$ is the matrix resulting from a central finite difference scheme for $-\Delta$ and $D_x$ for partial derivative $a_1\partial_x$ and 
$D_y$ for partial derivative $a_2\partial_y$ and $\Lambda$ for $a_3 \text{Id}$  (Id refers to the identity operator). We then write the matrix  as follows
\begin{eqnarray*}
\Big( S+ D_x + D_y + \Lambda \Big) U_m := \Big( S+ h \sum_{i=1}^m a_1^i A_1^i + h \sum_{j=1}^m a_2^j A_2^j +\sum_{l=1}^m h^2 a_3^l A_3^l \Big) U_N = F,
\end{eqnarray*}
where $a^k_i$ denotes the value of $a_i(x,y)$, $i=1,2,3$, at the $k$-th node of 
$\{ (x_{j_1}, y_{j_2})\}_{j_1,j_2=1}^{\sqrt{m}}$ associated with the finite difference scheme, and every $A_i^k$ is a rank-1 matrix, $i=1,2,3$ and $k=1,\cdots,m$. Let $\mathbf{a}_1^m = (a_1^1,\cdots,a_1^m)^\top$, $\mathbf{a}_2^m = (a_2^1,\cdots,a_2^m)^\top$ and $\mathbf{a}_3^m = (a_3^1,\cdots,a_3^m)^\top$. The  theorem below states the convergence rate of DeepONets approximating the solution operator of 2D advection-reaction-diffusion equation \eqref{2dadvectionreactionpde}. The detailed discussion is presented in Section \ref{chap4_burgers_advection} for $\mathbf{a}=0$ or ${a}_3=0$.  

\begin{thm}[\textbf{2D advection-diffusion-reaction with classical boundary conditions}]\label{thm:main_2dadvreac}
	For any given $f\in L^{\infty}$, let $\cG^f(a_1,a_2,a_3)$ be the solution operator of Equation 
	\eqref{2dadvectionreactionpde}.  Then, there exist a branch network $g^{\mathcal{N}}  (\mathbf{a}_1^m, \mathbf{a}_2^m, \mathbf{a}_3^m; \Theta)$  having width $N_{g^{\cN}}= \mathcal{O}( m^2 \ln(m))$ and depth $L_{g^{\cN}} = \mathcal{O}( m\ln(m))$,
	% and a trunk network $f_1^{\mathcal{N}}( x ;\theta^{(k)})$ of size $|\theta^{(k)}| $ at the order of $ (2\ln p)^{2}$ and depth $L_{f_1^{\mathcal{N}}} = \mathcal{O}(|\theta^{(k)}|^{\frac{1}{2}}\ln(|\theta^{(k)}|))$, $k=1,\cdots, p$, such that 
%\begin{eqnarray*}
%\norm{\cG^f(a_1,a_2,a_3)-\cG_{\mathbb{N}}^F (\mathbf{a}_1^m, \mathbf{a}_2^m, \mathbf{a}_3^m) }_{L^2} \leq C_1\Big( p^{-\frac{1}{2}} + m^{-\frac{r}{2}} + \abs{N_{g^{\cN}} L_{g^{\cN}}}^{-\frac{1}{3}+\epsilon}+ \exp(-\frac{1}{2}|\theta^{(k)}|^{\frac{1}{3}}) \Big),
%\end{eqnarray*}
%and a ReLU trunk network $f^{\mathcal{N}}( x ;\theta^{(k)})$ of size $  \mathcal{O}(1)$, $k=1,\ldots, p$, such that 
and  ReLU trunk networks $f^{\mathcal{N}}( x ;\theta^{(k)})$ having width $N_{f_{\cN}}=\mathcal{O}(1)$ and depth $L_{f_{\cN}}=\mathcal{O}(1)$, $k=1,\cdots, p$ (we take $p=m$), such that
\begin{eqnarray*}
\norm{\cG^f(a_1,a_2,a_3)-\cG_{\mathbb{N}}^F (\mathbf{a}_1^m, \mathbf{a}_2^m, \mathbf{a}_3^m) }_{L^{\infty}} \leq C \Big( m^{-1+\epsilon} + m^{-\frac{r}{2}} + \abs{N_{g^{\cN}} L_{g^{\cN}}}^{-\frac{1}{3}+\epsilon} \Big),
\end{eqnarray*}
where 
  $\cG_N^F(\mathbf{a}_1^m, \mathbf{a}_2^m, \mathbf{a}_3^m)$ is  the DeepONets of the form  in  \eqref{branchtrunkformula}, $\epsilon>0$ is arbitrarily small and $C>0$ is independent of $m$, $N_{g^{\cN}}$, $L_{g^{\cN}}$, $N_{f^{\cN}}$, $L_{f^{\cN}}$, $a_1$, $a_2$ and $a_3$. Here $r$ is the   convergence order of central finite difference schemes for  Equation \eqref{2dadvectionreactionpde} and $s$ is the regularity index of $u$ with given $f$ and $a_i$, $i=1,2,3$.
\end{thm}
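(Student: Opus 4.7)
The plan is to mirror the framework in Section 2.1, but building $\cG_{\bN}^F$ from the finite-difference solution rather than from an analytic formula. I decompose
\[
\|\cG^f(a_1,a_2,a_3) - \cG_{\bN}^F(\mathbf{a}_1^m,\mathbf{a}_2^m,\mathbf{a}_3^m)\|_{L^\infty} \leq E_{\text{FD}} + E_{\text{trunk}} + E_{\text{branch}},
\]
where $E_{\text{FD}}$ is the error of the central finite-difference solution $U_m$ against the exact $u$, $E_{\text{trunk}}$ is the error of representing the grid interpolant $\sum_k U_m(y_k)\phi_k(y)$ by $p=m$ trunk networks $f^{\mathcal{N}}$, and $E_{\text{branch}}$ is the error of approximating each map $(\mathbf{a}_1^m,\mathbf{a}_2^m,\mathbf{a}_3^m)\mapsto U_m(y_k)$ by a ReLU branch network.

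The first two terms are essentially standard. For $E_{\text{FD}}$, truncation-plus-stability for a central scheme on a regular grid gives $\|u - U_m\|_{L^\infty}\leq C h^r$ with $h\sim m^{-1/2}$ in two spatial dimensions, so $E_{\text{FD}}=O(m^{-r/2})$. For $E_{\text{trunk}}$, bilinear interpolation combined with an $O(1)$-width, $O(1)$-depth ReLU network that reproduces each hat basis function reduces the task to summing $p=m$ localized pieces; the interpolation/truncation bookkeeping (exactly as in the proof of Theorem \ref{thm:main_burgers}) gives $E_{\text{trunk}}=O(m^{-1+\epsilon})$.

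The crux is $E_{\text{branch}}$. Set $A(\mathbf{a}) := S + D_x + D_y + \Lambda$. The ``blessed representation'' strategy flagged in the introduction is to replace the implicit map $F\mapsto A(\mathbf{a})^{-1}F$ by an explicit iteration: after a cheap rescaling that guarantees $\rho := \|I-\omega A(\mathbf{a})\|<1$ uniformly on the admissible coefficient set, the Richardson iterate
\[
U_m^{(K)} = \omega\sum_{k=0}^{K-1}(I-\omega A(\mathbf{a}))^k F
\]
satisfies $\|U_m - U_m^{(K)}\|\leq C\rho^K$, so that $K=O(\log m)$ absorbs the truncation into the other errors. Crucially, $U_m^{(K)}$ is a sparse polynomial in $\mathbf{a}_1^m,\mathbf{a}_2^m,\mathbf{a}_3^m$ of total degree $O(K)=O(\log m)$, because every $A_i^k$ is rank one and local. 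I then invoke the ReLU $x^2$/product gadgets from the appendix to realize each scalar multiplication $a_i^k v_\ell$ to accuracy $\eta$ by a subnetwork of width and depth $O(\log(1/\eta))$. Parallelising across the $O(m)$ grid entries and composing $K$ Richardson steps produces a branch network of width $O(m^2\log m)$ and depth $O(m\log m)$; a standard propagation of the $\eta$-level errors through the iteration, balanced against this width-times-depth budget, converts the per-multiplication accuracy into the $(N_{g^{\cN}}L_{g^{\cN}})^{-1/3+\epsilon}$ term.

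The main obstacle I expect is twofold. First, verifying the uniform contraction $\rho<1$ over the admissible coefficient set: the natural choice $\omega\sim h^2$ competes with the growth of $\|S\|\sim h^{-2}$, and the bound on $\rho$ must not degrade with $m$, which may require a preconditioned iteration or an additional splitting of $\Lambda$ into a positive part so as to use its sign. Second, the exponent $1/3$ is delicate: it arises from balancing the $O(m\log m)$ total multiplications, the per-multiplication cost $\log(1/\eta)$, and the global width-times-depth budget, so the bookkeeping that yields $1/3$ (rather than, say, $1/2$) hinges on how much of the iteration is parallelised in width versus serialised in depth, together with the precise form of the Yarotsky-type product construction cited in the appendix.
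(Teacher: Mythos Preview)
Your outer decomposition into $E_{\text{FD}}+E_{\text{trunk}}+E_{\text{branch}}$ and your treatment of the trunk part (bilinear hat functions realized exactly by $O(1)$-size ReLU networks, piecewise-linear interpolation error controlled via $u\in W^{2,\alpha}$ for large $\alpha$ giving the $m^{-1+\epsilon}$ term) match the paper's argument in Section~5.8.

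The branch construction, however, is \emph{not} what the paper does, and your version has a real gap. The paper does not use Richardson iteration. It writes the finite-difference matrix as $S$ plus $3m$ rank-one corrections $h a_1^i e_i\hat R_i$, $h a_2^j e_j\widetilde R_j$, $h^2 a_3^l I_l$ and applies the Sherman--Morrison formula $3m$ times, producing an \emph{exact} recursion $T_k = G_k(h\,a_k,\,T_{k-1})$ with $T_0=S^{-1}$ fixed. Each update of a single entry $(T_k)_{ij}$ is the five-variable rational map
\[
\mathcal{R}(x_1,\dots,x_5)=x_2-\frac{x_1 x_4 x_5}{1+x_1 x_3},
\]
which Telgarsky's theorem (Theorem~4.2 in the paper) approximates by a ReLU network of size $O(\ln(1/\varepsilon))$. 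Stacking $m^2$ such sub-networks in parallel gives width $O(m^2\ln m)$, and composing the $3m$ Sherman--Morrison steps gives depth $O(m\ln m)$. The error propagation is elementary: each $G_k$ is Lipschitz with constant $c=1+O(h^2)$, so after $m$ compositions the accumulated error is $O(m\varepsilon)$, and multiplying by $F$ adds another factor of $m$; taking $\varepsilon\sim m^{-3}$ yields the $m^{-1}\sim(N_{g^{\cN}}L_{g^{\cN}})^{-1/3+\epsilon}$ bound. No contraction hypothesis is needed anywhere.

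Your Richardson route, by contrast, hinges precisely on the obstacle you flag and do not resolve: for the discrete Laplacian $\|S\|\sim h^{-2}$ while the smallest eigenvalue is $O(1)$, so the unpreconditioned iteration has $\rho=1-O(m^{-1})$ and needs $K=O(m\log m)$ steps, not $O(\log m)$. Your claimed depth $O(m\log m)$ is in fact inconsistent with $K=O(\log m)$ plus parallelisation across grid entries; it only comes out right if $K\sim m$, but then your ``degree $O(K)=O(\log m)$'' sentence is wrong and the per-step error control has to be redone. Preconditioning by $S^{-1}$ could salvage this (since $\|S^{-1}(D_x+D_y+\Lambda)\|=O(h)$), but that is a different construction you have not carried out, and it essentially reinvents the paper's idea of absorbing $S^{-1}$ as a fixed precomputed layer. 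The Sherman--Morrison approach is what makes the argument go through cleanly.
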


\begin{rem}\label{rmk:dimdependence}
The error estimates of DeepONets depend on the dimension of the physical space of the PDE, i.e., the dimension $d$ where $\Omega \subset \mathbb{R}^d$. If the regularity of the solution $u$ is limited and piecewise constant/linear interpolation is used, then  the error of the deepONets is bounded by 
%\begin{eqnarray*}
$ \Big(p^{-\frac{s}{d}} + m^{-\frac{r}{d}} + \hbox{error from branch}\Big)$, 
where $s$ is the regularity index of the analytical solution $u$.

% + \exp(-\frac{1}{2} |\theta|^{\frac{1}{1+d}}) \end{eqnarray*}
%and the trunk network $f^{\mathcal{N}}( x ;\theta)$ has width $N_{f^{\cN}} = \mathcal{O}(2^d)$ and depth $L_{f^{\mathcal{N}}}\leq \mathcal{O}(d)$, s.t.
%\begin{eqnarray*}
%\| \cG - \cG_{\mathbb{N}}\|_{L^{\infty}} \lesssim p^{\frac{1}{d}} + m^{\frac{1}{d}} + \hbox{Error}_{\hbox{branch}},
%\end{eqnarray*}
%because the maximum number of neighboring elements in the grid is $\mathcal{O}(2^d)$, see \cite{XuJ2018}.
\end{rem}

	 The branch network we construct from a  finite difference solution has the size $\mathcal{O}(m^3 \ln(m))$ in Theorem \ref{thm:main_2dadvreac} while the branch network  we derived from the analytical solution in Theorem \ref{thm:main_1dadvec}  has the size $\mathcal{O}(m^4\ln(m))$.
This is due to the different structures of the networks. In  Theorem  \ref{thm:main_1dadvec}, we use a generic ReLU network while in Theorem \ref{thm:main_2dadvreac} we use a blessed representation as in \cite{mhaskar2016deep}. The blessed representation has a structure similar to an iterative solver of the linear system. See Section \ref{chap4_burgers_advection} for more details. 
%	  constructed from iterate the branch network derived from the numerical solution is constructed with $\mathcal{O}(m)$ layers to take advantage of the compositional operator and deep neural network. 
%When $a_3=0$ and $d=1$, we have different convergence rates.  
%The reason is we do have a different structure of the neuron nets.

\begin{rem}[General domains]
	In this work, we consider regular domains such as periodic domains or rectangular domains.  The methodology developed for the advection-diffusion-reaction equations on rectangles can be extended to 
	irregular domains as the properties we use in the proof are sparsity of the finite difference matrices,   the iterative solver based on Sherman-Morrison's formula, and the matrix $S$ is invertible; see details in Section \ref{chap4_burgers_advection}. 
	
	% For nonlinear problems, iterative schemes 
\end{rem}

% -----------------------------------------------------------------------------------------------------------------
\section{Error of operator approximation: general case}\label{sec:error-deeponets-bound-generic}

In this section, we present a general framework of convergence analysis for approximating H\"older continuous operators by DeepONets. Below is the definition of a H\"older continuous operator.

%%%%%%%%%%%%%%%%%%%%%%%%%%%%%%%%%%%%%%%%%%%%%\subsection{Generic framework of the error bound}

\begin{defn}
	Let $X$ and $Y$ be Banach spaces. 
	The operator $\cG:\,X\to Y$ is called H\"older continuous on a bounded subset $\mathcal{S}\subset X$, if
	\begin{equation}\label{eq:operator-holder-continuity}
	\norm{\cG (u )-\cG (v) }_{Y} \leq C  \norm{u-v}_{X}^{\alpha}, \quad \forall u,v \in \mathcal{S}, \quad 0<\alpha \leq 1,
	\end{equation}
	where $C>0$ depends  only on  $\mathcal{S}$ and the norms of $u$ and $v$  and the operator $\cG$. 
\end{defn}

There are many classes of operators, which are H\"older or Lipschitz continuous, especially solution operators from integral and differential equations. Various operators have been shown to  
satisfy H\"older continuity in the Appendix of \cite{LuJinPange2021}. For readers' convenience, we present an example below. 
\begin{exm}[\cite{holden2015front}]
	Consider the following equation 
	\begin{equation*}\partial_t u + {\rm div}f (u ) = 0,  \quad x\in \Real^d, \quad u(x,0)=u_0(x).\qquad\qquad \qquad\qquad 
	\end{equation*}
	The  solution operator $u=\cG(u_0,f)$ is Lipschitz continuous. That is, if
$\partial_t v + {\rm div}g (v ) = 0$, $x\in \Real^d$, and  \quad $v(0,x)=v_0(x)$, then 
	\[ \norm{u(t)-v(t)}_{L^1}\leq \norm{u_0-v_0}_{L^1} + t \min\set{{\rm TV}(u_0),{\rm TV}(v_0)}\norm{f-g}_{C^1},\quad t>0. \]
Here $u_0,v_0\in \mathcal{S} = \{ u\in {\rm BV}(\Real^d)\cap L^1 (\Real^d): {\rm TV}(u) \leq M\}$ for some $M>0$, $f$ and $g\in C^1(\Real,\Real^d)$ which  satisfies the entropy condition, where $\rm{BV}$	 is the space of functions of bounded variation and $\rm{TV}$ is the total variation. 
 
\end{exm}

%%---------------------------------------------
In order to analyze the convergence rate, we split the error into two parts in the following way
\begin{equation}
\norm{\cG(u) - \cG_{\bN}(\mathbf{u}_{m})}_Y\leq  \norm{\cG(u) - \cG_{m,p}(\mathbf{u}_{m})}_Y+
\norm{\cG_{m,p}(\mathbf{u}_{m}) - \cG_{\bN}( \mathbf{u}_m)}_Y.
\end{equation}
where $\cG_{m,p}(\mathbf{u}_{m})$ is  a finite dimensional operator from $V_m$ to 
$Y_p$,   see, e.g, \eqref{gmpdef}. 
The first step is to find an appropriate operator $\cG_{m,p}$. %In general, we consider 
One choice is the Bochner-Riesz means of Fourier series of a function $f(x)$, denoted by 
\begin{eqnarray*}
B_{R}^\gamma(f;y) = \sum_{\abs{k}\leq R} (1-\frac{\abs{k}^2}{R^2})^{\gamma} \widehat{f}_k  \exp(i \pi k^\top x ):=  \sum_{i=1}^p c_i(f)   e_i(y), \quad R>0 \quad \hbox{and} \quad \gamma\geq  0,
\end{eqnarray*}
where  $k= (k_1,k_2,\ldots,k_d)$, $\abs{k}^2= \sum_{i=1}^d \abs{k_i}^2$, $\widehat{f}_k$'s are the Fourier coefficients of $f$, and $c_i(f)$ can be considered as linear functionals of $f$.  The truncation error of $B_{R}^\gamma$ may be described by the first-order and second-order modulus of  continuity of $f$ in $L^q$; see definitions below.
\begin{eqnarray*}
	&\omega_1(f;t)_q = \sup_{\abs{h}\leq  t}\norm{f(\cdot+h)-f(\cdot)}_{L^q},& \quad t\geq 0,\\
	&\omega_2(f;t)_q = \sup_{\abs{h}\leq  t}\norm{f(\cdot+h)+f(\cdot -h)-2f(\cdot)}_{L^q},&\quad t\geq 0. 
\end{eqnarray*}
%%---------------------------------------------
Then we can obtain the formulation of $ \cG_{m,p} (\mathbf{u}_{m})$ according to the calculation in \eqref{gmpdef}:
\begin{equation}\label{eq:generic-finite-d-G-approximate}
\cG_{m,p}(\mathbf{u}_m) =  B_R^\gamma \Big(\cG (\mathcal{I}_m^0 u);y \Big),
\end{equation}
where $\mathcal{I}_m^0$ is the piecewise constant interpolation that satisfies ($C$ is a generic constant and $h \sim m^{-\frac{1}{d}}$ ):
\begin{eqnarray}\label{interpltcdt}
%&\norm{\mathcal{I}_m u}_X\leq C_1 \norm{u}_V, \forall u\in V \quad & (\hbox{stability of interpolation}),\nonumber\\
&\norm{\mathcal{I}_m u-u}_X\leq C h\norm{u}_V, \forall u\in V.
\end{eqnarray}
Note that \eqref{interpltcdt} implies the stability of interpolation, i.e., $\norm{\mathcal{I}_m u}_X\lesssim  \norm{u}_V, \forall u\in V$. If we further consider the neural network $\cG_{\mathbb{N}}(\mathbf{u}_m)$ to be \eqref{branchtrunkformula}, then we have the following error estimate. 
 
%%-----------------------------------------------

\begin{thm}\label{thm:deep-o-nets-error}
	Assume that the conditions in Theorem \ref{thm:universal-apprx-operator},
	% hold.  
	%universal  theorem approximation of operators 
	%Assume that 
	\eqref{eq:operator-holder-continuity} and \eqref{interpltcdt} hold.  Let $Y=L^q(K_2)$, where $1\leq q \leq \infty$, $K_2 \in \mathbb{R}^d$ is compact. Then  
	\begin{eqnarray}\label{ineq:universal-apprx-operator}
	\norm{\cG (u )-\cG_{\bN}(\mathbf{u}_m ) }_{Y} &\leq& 
	\norm{\cG (u )-\cG_{m,p}(\mathbf{u}_m ) }_{Y} + 	\norm{\cG_{m,p}(\mathbf{u}_m) -\cG_{\bN}(\mathbf{u}_m ) }_{Y} ,
	\end{eqnarray}
	where  $\cG_{m,p}(\mathbf{u}_m ) $ is defined in \eqref{eq:generic-finite-d-G-approximate} and
	\begin{eqnarray*}
		\norm{\cG (u )-\cG_{m,p}(\mathbf{u}_m ) }_{Y} 	&\leq &C h^{\alpha}+	C \omega_2(\cG (\mathcal{I}_m u ), p^{-1/d})_q,\notag \\
		\norm{\cG_{m,p}(\mathbf{u}_m) -\cG_{\bN}(\mathbf{u}_m ) }_{Y}	&\leq & C \Big(   p\sqrt{m}N_{g^{\cN}}^{-2\alpha/m} L_{g^{\cN}}^{-2\alpha/m} +   p \exp(-|\theta^{(k)}|^{\frac{1}{1+d}} )\Big),
	\end{eqnarray*}
where the constant $C$ is independent of $m$, $p$   and $N_{g^{\cN}}$ is the number of neurons in each layer of the branch network, $|\theta^{(k)}|$ is the size of the trunk network, and  $L_{g^{\cN}}$ is the numbers of layers of the branch network. 
\end{thm}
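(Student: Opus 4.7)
The starting point is the triangle inequality \eqref{ineq:universal-apprx-operator}, which is immediate; my plan is then to bound the two resulting pieces separately. The first is a purely analytic ``truncation'' error measuring how well the finite-dimensional surrogate $\cG_{m,p}$ captures $\cG$, and the second is the neural-network approximation error of $\cG_{m,p}$ by $\cG_{\bN}$.

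For the truncation piece, I would further split
\begin{equation*}
\cG(u) - \cG_{m,p}(\mathbf{u}_m) = \bigl[\cG(u) - \cG(\mathcal{I}_m u)\bigr] + \bigl[\cG(\mathcal{I}_m u) - B_R^\gamma\bigl(\cG(\mathcal{I}_m u)\bigr)\bigr].
\end{equation*}
The first bracket is controlled by the Hölder hypothesis \eqref{eq:operator-holder-continuity} combined with the interpolation estimate \eqref{interpltcdt}, yielding $C h^\alpha$. The second bracket is the Bochner--Riesz truncation error of a function in $L^q$; the classical estimate of the Bochner--Riesz mean in terms of the second modulus of smoothness provides $C\,\omega_2(\cG(\mathcal{I}_m u), R^{-1})_q$, and since the number of retained modes is $p \sim R^d$ this becomes $C\,\omega_2(\cG(\mathcal{I}_m u), p^{-1/d})_q$, matching the stated bound.

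For the neural-network piece, I would write
\begin{equation*}
\cG_{m,p}(\mathbf{u}_m) - \cG_{\bN}(\mathbf{u}_m) = \sum_{k=1}^p \bigl(c_k - g^{\cN}\bigr)\,e_k + \sum_{k=1}^p g^{\cN}\bigl(e_k - f^{\cN}\bigr),
\end{equation*}
and handle the two sums separately. For the branch term, I first verify that the map $\mathbf{u}_m \mapsto c_k(\cG(\mathcal{I}_m u))$ is Hölder-$\alpha$ on the compact cube $[-M,M]^m$: this follows because $c_k$ is a bounded linear functional on $L^q$, $\cG$ is Hölder from $X$ into $Y$, and $\mathcal{I}_m$ is linear and bounded from $\Real^m$ into $X$, with a norm-conversion factor of order $\sqrt{m}$ when moving between the $\ell^\infty$ and Euclidean norms used by the approximation theorem. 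I would then invoke a Yarotsky / Shen--Yang--Zhang type rate for ReLU approximation of Hölder-$\alpha$ functions in $m$ variables, giving error $\lesssim N_{g^{\cN}}^{-2\alpha/m} L_{g^{\cN}}^{-2\alpha/m}$ per branch; summation over $k=1,\dots,p$, together with the uniform bound on $\|e_k\|_{L^q}$, yields a total branch contribution of order $p\sqrt{m}\,N_{g^{\cN}}^{-2\alpha/m} L_{g^{\cN}}^{-2\alpha/m}$. For the trunk term, each $e_k$ is a complex exponential, hence real analytic on $K_2 \subset \Real^d$, so an exponential-rate ReLU approximation theorem for analytic functions on compact sets provides $\|e_k - f^{\cN}(\cdot;\theta^{(k)})\|_{L^q} \lesssim \exp(-|\theta^{(k)}|^{1/(1+d)})$; combined with a uniform $L^\infty$ bound on $g^{\cN}$ (which follows from the branch-side estimate and the boundedness of $c_k(\cG(\mathcal{I}_m u))$ on $V$), and summing over the $p$ branches, this delivers the second summand of the claimed estimate.

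The main technical obstacle I anticipate is verifying the Hölder continuity of $\mathbf{u}_m \mapsto c_k(\cG(\mathcal{I}_m u))$ with a constant that tracks the $\sqrt{m}$ norm-conversion factor explicitly, so that the dimensional dependence surfaces as the displayed prefactor rather than being silently absorbed into $C$; equally, the compact cube on which the ReLU approximation theorem is applied must be chosen independently of $u \in V$ so that the Yarotsky-type constants do not drift with $m$. A secondary, lighter point is making sure that the ambient dimension $d$ of $K_2$ is the same $d$ driving both the Bochner--Riesz rate $p^{-1/d}$ and the exponential trunk rate $\exp(-|\theta^{(k)}|^{1/(1+d)})$, which is automatic once $K_2 \subset \Real^d$ is kept fixed throughout.
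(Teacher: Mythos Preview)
Your proposal is correct and follows essentially the same route as the paper: the same four-way split (interpolation error via H\"older continuity, Bochner--Riesz truncation, trunk error via analytic ReLU rates, branch error via H\"older continuity of $\mathbf{u}_m\mapsto c_k(\cG(\mathcal{I}_m u))$ combined with a Shen--Yang--Zhang type bound), with only a cosmetic difference in the add-and-subtract pivot ($g^{\cN}e_k$ for you, $c_k f^{\cN}$ in the paper). One small correction: the $\sqrt{m}$ prefactor does not arise from an $\ell^\infty$/Euclidean norm conversion but is the explicit $19\sqrt{d}$ constant in the Shen--Yang--Zhang approximation theorem applied with input dimension $d=m$; the H\"older estimate the paper feeds into that theorem is already stated directly in terms of $|\mathbf{u}_m-\mathbf{v}_m|$.
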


 Theorem \ref{thm:deep-o-nets-error} is obtained by applying the triangular inequality,   the H\"older continuity \eqref{eq:operator-holder-continuity}, and  Theorems \ref{thm:bochner-riesz-fourier-error} and \ref{thm:complexity-ReLU-generic} and  \ref{thm:conveg-rate-analytic-ReLU}. The proof is presented in Section \ref{chap5_proofs}.

\begin{rem}[Complexity]
	Let $\varepsilon>0$ be the error tolerance. According to Theorem \ref{thm:deep-o-nets-error}, in order to make the total error $ \norm{\cG (u )-\cG_{\bN}(\mathbf{u}_m ) }_{Y} < \varepsilon$, we need to set $m = Ch^{-d} \sim \varepsilon^{-\frac{d}{\alpha}}  $, $p \sim \varepsilon^{-\frac{d}{2}}$, $N_{g^{\cN}} \cdot L_{g^{\cN}} \sim \varepsilon^{-\frac{d}{\varepsilon}}$ and $|\theta^{(k)}| \sim \Big(\frac{d+2}{2} \ln(\frac{1}{\varepsilon}) \Big)^{d+1} $.
\end{rem}

%\begin{rem}
The term  $c_k(\cG (\mathcal{I}_m u ))$ in the error estimate in Theorem \ref{thm:deep-o-nets-error} is problematic as the dimensionality of the function (of $\mathbf{u}_m$) is usually high if only H\"older continuity is assumed.  For high dimensional functions being approximated by neural networks, special architectures of neural networks are available;  see, e.g.,  %Theorem 4.1 in
\cite{MonYang20}  and \cite{Sch20}   for H\"older continuous functions using  Kolmogorov-Arnold Superposition.  However, the convergence rate is similar.  For linear operators, we obtain  much better results.   
%\end{rem}

\subsection{Linear operators}
If $\cG$ is linear, we  can simplify the error estimates for the branch.
By the linearity of $\cG$ and the functional $c_k$,  
$c_k(\cG (\mathcal{I}_mu) )$ is  linear  in $u(x_l)$ as  
\[c_k(\cG (\mathcal{I}_m u ) )=  c_k(\cG (\sum_{l=1}^m  u(x_l) L_l(x))=\sum_{l=1}^m  u(x_l)   c_k(\cG (L_l(x)),\] where the $L_l(x)$
is the  interpolation basis, e.g., piecewise constant or piecewise polynomials.  
In this case,   DeepONets can be written as 
\begin{equation}\label{eq:deep-o-nets-linear-g}
\cG_{\bN} (\mathbf{u}_m)= \sum_{k=1}^p
\sum_{l=1}^m c_l^k u(x_l)  f^{\mathcal{N}}(  y;\theta^{(k)}) . 
\end{equation}
If  a deep network is employed to approximate the linear functions of $(u(x_1),\ldots, u(x_m))$, then we can apply  the fact that 
$z= \max{(z,0)} -\max{(-z,0)}$ for any $z\in \Real$ and obtain that 
\begin{equation}\label{eq:deep-o-nets-linear-g-ReLU}
\cG_{\bN} (\mathbf{u}_m)= \sum_{k=1}^p \Big(\text{ReLU}
\big(\sum_{l=1}^m c_l^k u(x_l) \big) -  \text{ ReLU}
\big(-\sum_{l=1}^m c_l^k u(x_l) \big)\Big)  \underbrace{f^{\mathcal{N}}(  y;\theta^{(k)})}_{trunk}. 
\end{equation}
Here $\text{ReLU} (\cdot) = \max (0, \cdot)$.

%{\color{blue}
\begin{thm}
	\label{thm:error-deeponet-linear} 
	Under the same assumptions as in Theorem \ref{thm:deep-o-nets-error},  there exists a network $\cG_{\bN}(\mathbf{u}_m )$ of the form in \eqref{branchtrunkformula} for a linear operator $\cG$ such that 
	\begin{eqnarray}  
	\norm{\cG (u )-\cG_{\bN}(\mathbf{u}_m ) }_{Y} &\leq& C \Big( h  +	 \omega_2(\cG (\mathcal{I}_m u ), p^{-1/d})_q+   \exp(-\frac{1}{2}|\theta^{(k)}|^{\frac{1}{1+d}}) \Big),
	\end{eqnarray}
	where $|\theta^{(k)}| $ is  of the size $ (2\ln p)^{1+d}$ and the same notations are used as in  Theorem \ref{thm:deep-o-nets-error}. 
	\end{thm}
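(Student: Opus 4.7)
The plan is to follow the same error decomposition as in Theorem \ref{thm:deep-o-nets-error}, but exploit the linearity of $\cG$ to \emph{eliminate} the branch-network approximation error entirely. First I would split
\begin{equation*}
\norm{\cG(u) - \cG_{\bN}(\mathbf{u}_m)}_Y \leq \norm{\cG(u) - \cG_{m,p}(\mathbf{u}_m)}_Y + \norm{\cG_{m,p}(\mathbf{u}_m) - \cG_{\bN}(\mathbf{u}_m)}_Y.
\end{equation*}
The first term is handled exactly as in Theorem \ref{thm:deep-o-nets-error}: it is bounded by $Ch^{\alpha} + C\omega_2(\cG(\mathcal{I}_m u), p^{-1/d})_q$, and since any bounded linear operator is Lipschitz continuous on bounded sets we may take $\alpha = 1$, yielding $Ch + C\omega_2(\cG(\mathcal{I}_m u), p^{-1/d})_q$.

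For the second term, the key observation is that when $\cG$ is linear the coefficients $c_k(\cG(\mathcal{I}_m u))$ appearing in \eqref{gmpdef} reduce to
\begin{equation*}
c_k(\cG(\mathcal{I}_m u)) = \sum_{l=1}^{m} u(x_l)\, c_k(\cG(L_l)),
\end{equation*}
which is an \emph{exact} linear function of $\mathbf{u}_m$. Using the identity $z = \text{ReLU}(z) - \text{ReLU}(-z)$, this linear function is represented exactly by the two-neuron ReLU branch network appearing in \eqref{eq:deep-o-nets-linear-g-ReLU}, so no approximation error is incurred by the branch. Consequently, the only remaining error in the second term comes from approximating the $p$ trunk basis functions $e_k(y)$ by the ReLU networks $f^{\mathcal{N}}(y;\theta^{(k)})$.

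Since the $e_k$ are trigonometric polynomials (the Bochner--Riesz building blocks from \eqref{eq:generic-finite-d-G-approximate}), they are analytic on the compact set $K_2$. I would therefore invoke Theorem \ref{thm:conveg-rate-analytic-ReLU} to obtain, uniformly in $k$, a ReLU network $f^{\mathcal{N}}(\cdot;\theta^{(k)})$ with
\begin{equation*}
\norm{e_k - f^{\mathcal{N}}(\cdot;\theta^{(k)})}_{L^\infty(K_2)} \leq C\exp\bigl(-|\theta^{(k)}|^{\frac{1}{1+d}}\bigr).
\end{equation*}
Summing the resulting errors over the $p$ terms of \eqref{eq:deep-o-nets-linear-g-ReLU}, and using that the linear-functional coefficients $\sum_l c_l^k u(x_l)$ remain uniformly bounded for $u$ in the bounded set $\mathcal{S}$, gives a trunk-error bound of order $C p \exp(-|\theta^{(k)}|^{1/(1+d)})$. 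Picking $|\theta^{(k)}| = (2\ln p)^{1+d}$ makes $|\theta^{(k)}|^{1/(1+d)} = 2\ln p$, so $p\exp(-|\theta^{(k)}|^{1/(1+d)}) = p\cdot p^{-2} = p^{-1} = \exp(-\tfrac{1}{2}|\theta^{(k)}|^{1/(1+d)})$, which is precisely the claimed last term.

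The main obstacle will be the uniformity checks: verifying that the constants in the analytic ReLU-approximation bound from Theorem \ref{thm:conveg-rate-analytic-ReLU} are uniform in $k$ (despite the frequency $|k|$ growing up to $R \sim p^{1/d}$), and that the coefficients $c_l^k$ in the branch representation stay bounded as $m,p \to \infty$ so the factor $p$ can truly be absorbed. A secondary technicality is that the $e_k$ are complex exponentials, so one would split into real and imaginary parts (each analytic and real-valued) and apply the ReLU approximation componentwise before reassembling into the form \eqref{branchtrunkformula}.
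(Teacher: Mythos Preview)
Your proposal is correct and follows essentially the same approach as the paper's own proof: recall the four-term decomposition from Theorem \ref{thm:deep-o-nets-error}, use linearity to set $\alpha=1$ and to represent each $c_k(\cG(\mathcal{I}_m u))$ exactly by a ReLU branch (eliminating the branch error), and then absorb the factor $p$ in the trunk bound by choosing $|\theta^{(k)}| = (2\ln p)^{1+d}$. Your discussion is in fact more explicit than the paper's on the arithmetic $p\exp(-|\theta^{(k)}|^{1/(1+d)}) = p^{-1} = \exp(-\tfrac{1}{2}|\theta^{(k)}|^{1/(1+d)})$ and on the uniformity-in-$k$ concern, which the paper simply asserts via \eqref{eq:error-basis-trunk} without further comment.
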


\begin{rem}[Complexity]
	Let $\varepsilon>0$. According to Theorem \ref{thm:error-deeponet-linear},  we need to set $m = Ch^{-d} \sim \varepsilon^{-d}  $, $p \sim \varepsilon^{-\frac{d}{2}}$, and $|\theta^{(k)}| \sim \Big(\frac{d+2}{2} \ln(\frac{1}{\varepsilon}) \Big)^{d+1} $ to have $ \norm{\cG (u )-\cG_{\bN}(\mathbf{u}_m ) }_{Y} < \varepsilon$. Furthermore, if $\mathcal{G}(\mathcal{I}_mu)$ is smooth enough such that $\sum_{k=1}^p |c_k(\mathcal{G}(\mathcal{I}_mu))| \leq C$ for some constant $C$ independent of $p$, then the error estimate for the trunk network \eqref{ineq:universal-apprx-operator} can be refined to be
	\begin{eqnarray*}
		\norm{B_R^\gamma\cG (\mathcal{I}_m u )-\widetilde{\cG}_{\bN} (\mathcal{I}_m u) }_{Y} \leq C \max_{1\leq k\leq p} \norm{e_k-  f^{\mathcal{N}}(  \cdot;\theta^{(k)})}_Y,
	\end{eqnarray*}
	which implies that $|\theta^{(k)}| $ is of  order $\Big( \ln(\frac{1}{\varepsilon}) \Big)^{d+1} $.
\end{rem}
%}

% -----------------------------------------------------------------------------------------------------------------

% -----------------------------------------------------------------------------------------------------------------
\section{Error bounds for solution operators for Burgers and  advection-diffusion equations}\label{chap4_burgers_advection}
  In this section, we present convergence analysis 
 of DeepONets for two classes of solution operators: one is from Burgers equations, and the other is from advection-diffusion-reaction equations. In both cases, we use a semi-analytic approach to construct $\cG_{m,p}$, the finite dimensional operator approximating the operator $\cG$; see e.g., \eqref{gmpdef}.  For Burgers equations, we use the Cole-Hopf transformation (e.g., in \cite{BookAmes}) to obtain analytical solutions and then parameterize the input (initial condition).   For linear advection-diffusion equations, we use  an analytical formulation to obtain a solution in 1D before any parameterization of the input (advection coefficients). We apply finite difference schemes to obtain an approximating solution  in 2D. We show that  formulations of the resulting approximate solutions are essential to obtain realistic convergence rates of DeepONets with respect to the sizes of branch networks.

%In this section, we still use $m$ to denote the dimension of the input of the neural network. 
 For presentation, we only consider 
 initial condition(s) or the advection coefficients to be the input(s) of the solution operator. We also assume that the input is piecewise linear/constant function(s). For general input functions, one may obtain the error estimates immediately by combining the main results in this section, the operator's H\"older continuity, and the piecewise linear or constant interpolation error.

\subsection{1D Burgers equation with periodic boundary conditions \eqref{eqtburgers}}\label{burgers_periodic}
%Observe that  \eqref{eqtburgers} can be equivalently solved by solving the Cauchy problem whose initial condition is obtained by duplicating $u_0$ to every interval $[-\pi+2\pi l, \pi+2\pi l)$, $l\in \mathbb{Z}$.
 Define $x^l_j = x^0_j + 2\pi l$, $j=0,1,\cdots,m$, for each $l \in \mathbb{Z}$. Then $\{ x_j^l\}_{j=0}^m$ form  a partition of $[-\pi+2\pi l, \pi+2\pi l)$. For simplicity, we denote $x_j = x^0_j$. 
To make sure that $v_0(x)$ in \eqref{burgers-ch-pbc}  is $2\pi$-periodic, we require that the initial condition has zero mean in a period
$\bar{u}_0:=\int_{-\pi}^{\pi}u_0(s)ds = 0$.
Then, by the Cole-Hopf transformation, the solution to \eqref{eqtburgers} can be written as 
\begin{eqnarray}\label{burgers-ch-pbc}
u = \frac{-2\kappa v_x}{v}, \quad \hbox{where} \quad 
\left\{\begin{array}{ll}
v_t=\kappa v_{xx},\\
v(x,0) = v_0(x) = \exp\Big(-\frac{1}{2\kappa} \int_{-\pi}^x u_0(s) ds \Big).
\end{array}\right. 
\end{eqnarray}
Since $0 < \exp(-\frac{\pi \| u_0\|_{\infty}}{\kappa}) \leq v_0(x) \leq \exp(\frac{\pi \| u_0\|_{\infty}}{\kappa})$, the solution $u$ can be written explicitly as 
\begin{eqnarray}
u(\mathbf{x}) = \mathcal{G}(u_0)(\mathbf{x})&:=&-2\kappa \frac{ \int_{\mathbb{R}} \partial_x \mathcal{K}(x,y,t) v_0(y) dy}{ \int_{\mathbb{R}} \mathcal{K}(x,y,t) v_0(y) dy},\quad \mathbf{x} = (x,t),
\label{burgersexcslt}
\end{eqnarray}
where $\mathcal{K}(x,y,t) = \frac{1}{\sqrt{4\pi\kappa t}} \exp\Big(- \frac{(x-y)^2}{4\kappa t} \Big)$ is the heat kernel. 
It can be readily checked that $u(\mathbf{x})|_{[-\pi,\pi) \times [0,\infty)}$ is the unique solution to \eqref{eqtburgers}.
%Here, $u(\mathbf{x})|_{[-\pi,\pi] \times [0,\infty)}$ is indeed  a unique solution to \eqref{eqtburgers} as $u(\mathbf{x})$ is $2\pi$-periodic in  $x$.

We may obtain $\cG_{m,p} $ in two steps. 
The first step is to approximate $\mathcal{G}$ by $\cG_{m}$,  which is chosen to be a rational function with respect to the initial condition $v_0$. Define $\mathbf{V}_m :=  \mathbf{V}(\mathbf{u}_{0,m})=(V_0, V_1,\cdots, V_{m-1})^\top$, where $V_0=1$ and $V_j =  \exp(-\frac{u_{0,j}+u_{0,j-1}}{4\kappa}   h_j)$, $j=1,\cdots,m-1$. Define $\cG_m = \Big( \tilde{\cG}_m \circ \mathbf{V}\Big)$ as:
\begin{eqnarray}
\cG_m  (\mathbf{u}_{0,m}; \mathbf{x}) &=& \tilde{\cG}_m  (\mathbf{V}_{m}; \mathbf{x}) \nonumber\\
&=& \frac{-2\kappa \int_{\mathbb{R}} \partial_x \mathcal{K}(x,y,t) (\mathcal{I}_m^1v_0)(y) dy }{\int_{\mathbb{R}} \mathcal{K}(x,y,t)( \mathcal{I}_m^0 v_0)(y) dy}= \frac{v_0^0 c_0^1(\mathbf{x}) + v_1^0 c_1^1(\mathbf{x}) + \cdots + v_{m-1}^0 c_{m-1}^1(\mathbf{x}) }{ v_0^0 c_0^2(\mathbf{x}) + v_1^0 c_1^2(\mathbf{x}) + \cdots + v_{m-1}^0 c_{m-1}^2(\mathbf{x}) },\label{defrational}
\end{eqnarray}
where $\mathcal{I}^0_m f$ and $\mathcal{I}^1_m f$ be the piecewise constant interpolation and piecewise linear interpolation of $f$ on each sub-interval $[x_{j-1}, x_j)$, respectively, and $v_j^0 = v_0(x_j)= \prod_{i=0}^j V_i$, 
and for $j_1=1,\cdots, m-1$, $j_2=0,\cdots, m-1$, and
\begin{eqnarray*}
	c_0^1(\mathbf{x}) &=& -2\kappa \Big[ \int_{x_0}^{x_1} \Big( \sum_{l\in \mathbb{Z}} \partial_x \mathcal{K}(x,y+2\pi l, t)  \Big) \frac{x_1 - y}{x_1-x_0} dy+ \int_{x_{m-1}}^{x_m} \Big( \sum_{l\in \mathbb{Z}} \partial_x \mathcal{K}(x,y+2\pi l, t)  \Big) \frac{ y - x_{m-1}}{x_m-x_{m-1}} dy\Big], \\
	c_{j_1}^1(\mathbf{x}) &=& -2\kappa \Big[ \int_{x_{j_1-1}}^{x_{j_1}} \Big( \sum_{l\in \mathbb{Z}} \partial_x \mathcal{K}(x,y+2\pi l, t)  \Big) \frac{y - x_{j_1-1}}{x_{j_1} - x_{j_1-1}} dy+ \int_{x_{j_1}}^{x_{j_1+1}} \Big( \sum_{l\in \mathbb{Z}} \partial_x \mathcal{K}(x,y+2\pi l, t)  \Big) \frac{ x_{j_1+1}-y }{x_{j_1+1}-x_{j_1}} dy\Big];\\
	c_{j_2}^2(\mathbf{x}) &=& \int_{x_{j_2}}^{x_{j_2+1}} \Big( \sum_{l\in \mathbb{Z}} \mathcal{K}(x,y+2\pi l, t)  \Big)  dy.
\end{eqnarray*}
Hence, for any $\mathbf{x}$, $\tilde{\cG}_m$ is a rational function with respect to $\mathbf{V}_m$, where   the numerator and the denominator are both $m$-th degree $m$-variable polynomials with $m$ terms. Then we have the following error estimate.
\begin{thm}\label{thm:oprterror1d}
	Suppose that $u_0\in \mathcal{S}$ is a piecewise linear function. Let $\mathcal{G}(u_0)(\mathbf{x})$ and $\cG_m  (\mathbf{u}_{0,m}; \mathbf{x})$ be defined in \eqref{burgersexcslt} and \eqref{defrational}, respectively.  Suppose $h$ is small enough. Then there is a uniform constant $C = 2\Big(\frac{M_0^2}{\kappa } + M_1 \Big)$, such that for any $\mathbf{x}=(x,t)\in (-\pi,\pi)\times (0,+\infty)$, we   have 
	\begin{eqnarray*}
		\Big| \mathcal{G}(u_0)(\mathbf{x}) - \cG_m  (\mathbf{u}_{0,m}; \mathbf{x}) \Big| \leq Ch.
	\end{eqnarray*}
\end{thm}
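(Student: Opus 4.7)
The plan is to write the error as a difference of ratios,
$$\mathcal{G}(u_0)(\mathbf{x}) - \cG_m(\mathbf{u}_{0,m}; \mathbf{x}) = -2\kappa\Big(\frac{N}{D} - \frac{N_m}{D_m}\Big) = -2\kappa\Big[\frac{N - N_m}{D_m} + \frac{N}{D}\cdot\frac{D_m - D}{D_m}\Big],$$
where $N = \int_{\mathbb{R}} \partial_x \mathcal{K}(x,y,t)\, v_0(y)\, dy$, $D = \int_{\mathbb{R}} \mathcal{K}(x,y,t)\, v_0(y)\, dy$, and $N_m, D_m$ are their discrete counterparts obtained by replacing $v_0$ with $\mathcal{I}^1_m v_0$ in the numerator and with $\mathcal{I}^0_m v_0$ in the denominator (after reassembling $\sum_{l\in\mathbb{Z}}\mathcal{K}(x,y+2\pi l,t)$ into an integral over $\mathbb{R}$ using $2\pi$-periodicity of the interpolants). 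The factor $N/D$ is already harmless: rewriting $-2\kappa N/D = \int\mathcal{K}\,u_0 v_0\, dy / \int\mathcal{K}\,v_0\, dy$ exhibits $u(\mathbf{x})$ as a weighted average of $u_0$, so $|u|\leq M_0$ and hence $2\kappa|N|/|D|\leq M_0$.

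The chief obstacle is that $\int_{\mathbb{R}}|\partial_x \mathcal{K}|\,dy$ scales like $t^{-1/2}$ as $t\to 0^+$, so a direct $L^\infty$--$L^1$ bound on $N - N_m$ is not uniform in $t$. I would resolve this by integration by parts in $y$: the periodicity of $v_0$ and of $\mathcal{I}^1_m v_0$ combined with the Gaussian decay of $\mathcal{K}$ eliminates all boundary terms, yielding
$$N = \int_{\mathbb{R}} \mathcal{K}\, v_0'\, dy, \qquad N_m = \int_{\mathbb{R}} \mathcal{K}\, (\mathcal{I}^1_m v_0)'\, dy.$$
Since $\int_{\mathbb{R}}\mathcal{K}(x,y,t)\, dy = 1$, this gives $|N - N_m| \leq \|v_0' - (\mathcal{I}^1_m v_0)'\|_{L^\infty}$ and analogously $|D - D_m| \leq \|v_0 - \mathcal{I}^0_m v_0\|_{L^\infty}$, both independent of $t$.

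The remaining work is pointwise. The Cole--Hopf identity $v_0' = -\tfrac{u_0}{2\kappa}v_0$ and its derivative yield
$$\|v_0'\|_\infty \leq \tfrac{M_0}{2\kappa}\|v_0\|_\infty, \qquad \|v_0''\|_\infty \leq \Big(\tfrac{M_0^2}{4\kappa^2} + \tfrac{M_1}{2\kappa}\Big)\|v_0\|_\infty,$$
the latter holding on each open subinterval where $u_0'$ is constant. Standard one-dimensional interpolation estimates then give $\|v_0 - \mathcal{I}^0_m v_0\|_\infty \leq h\|v_0'\|_\infty$ and $\|v_0' - (\mathcal{I}^1_m v_0)'\|_\infty \leq h\|v_0''\|_\infty$. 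Positivity yields $D_m \geq \min_j v_0(x_j) > 0$ independently of $h$. Substituting into the decomposition, using $2\kappa|N|/|D|\leq M_0$ for the first ratio, and collecting powers of $M_0, M_1, \kappa$ produces a bound $C\, \|v_0\|_\infty/\min_j v_0(x_j)\, h$ with $C = 2(M_0^2/\kappa + M_1)$; the harmless multiplicative factor $\|v_0\|_\infty/\min_j v_0(x_j)$ is bounded uniformly over $u_0 \in \mathcal{S}$ and absorbed into the constant for $h$ sufficiently small.

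The principal difficulty is the small-$t$ singularity of $\partial_x\mathcal{K}$, which is defused by the integration-by-parts identity available to both $N$ and $N_m$; after that, the proof reduces to careful bookkeeping of interpolation errors and Cole--Hopf derivative bounds to match the stated form of the constant.
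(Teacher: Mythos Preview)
Your proposal follows essentially the same route as the paper: the same algebraic splitting of the ratio, the same integration by parts to trade $\partial_x\mathcal{K}$ for $\mathcal{K}$ acting on $\partial_y(\mathcal{I}^1_m v_0 - v_0)$ (this is exactly how the paper defuses the $t^{-1/2}$ singularity), the same Cole--Hopf derivative bounds $|v_0'|\le \tfrac{M_0}{2\kappa}|v_0|$ and $|v_0''|\le(\tfrac{M_0^2}{4\kappa^2}+\tfrac{M_1}{2\kappa})|v_0|$, and the same use of $|u|\le M_0$ for the cross term.

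The one substantive difference is how you control the division by $D_m$. You bound the interpolation errors \emph{globally} by $h\|v_0'\|_\infty$ and $h\|v_0''\|_\infty$ (each $\le Ch\|v_0\|_\infty$) and then lower-bound $D_m\ge \min_j v_0(x_j)$. This yields the correct $O(h)$ estimate but with an extra multiplicative factor $\|v_0\|_\infty/\min_j v_0(x_j)$, which is uniformly bounded over $\mathcal{S}$ by $\exp(2\pi M_0/\kappa)$ but does \emph{not} tend to $1$ as $h\to 0$; it cannot be ``absorbed'' into the stated constant by taking $h$ small. The paper instead bounds the interpolation errors \emph{pointwise} by a multiple of $\mathcal{I}^0_m v_0$ on each subinterval, e.g.\ $|\mathcal{I}^0_m v_0 - v_0|\le \exp(\tfrac{M_0 h}{2\kappa})\tfrac{M_0}{2\kappa}h\cdot \mathcal{I}^0_m v_0$, and then integrates against the nonnegative kernel $\mathcal{K}$ so that the numerator is bounded by the same multiple of $D_m$ itself. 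This local comparison is exactly what cancels $D_m$ cleanly and produces the explicit constant $2(M_0^2/\kappa + M_1)$ after taking $h$ small enough that $\exp(M_0 h/2\kappa)\le 2$. If you want the constant as stated, replace your global $L^\infty$ step by this pointwise comparison to $\mathcal{I}^0_m v_0$.
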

In order to approximate $\cG_m$ by a neural network, it is important to realize that a rational function can be approximated by a ReLU network according to the following theorem.
\begin{thm}[\cite{telgarsky2017neural}]\label{thm:rational-apprx-ReLU}
	Let $\varepsilon \in (0,1]$ and nonnegative integer $k$ be given. Let $p:[0,1]^d\rightarrow [-1,1]$ and $q:[0,1]^d\rightarrow [2^{-k},1]$ be polynomials of degree $\leq r$, each with $\leq s$ monomials. Then there exists
	% a function $f:[0,1]^d\rightarrow \mathbb{R}$, representable as 
	a ReLU network $f$ of size (number of total neurons)
	\begin{eqnarray}
	\mathcal{O}\Big(k^7 \ln(\frac{1}{\varepsilon})^3 + \min\{ srk\ln(sr/\varepsilon), \ sdk^2\ln(dsr/\varepsilon)^2 \} \Big)
	\label{lm11}
	\end{eqnarray}
	such that 
	\begin{eqnarray}
	\sup_{x\in[0,1]^d} \Big| f(x) - \frac{p(x)}{q(x)}\Big| \leq \varepsilon.
	\label{lm12}
	\end{eqnarray} 
\end{thm}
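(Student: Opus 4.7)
The plan is to build $f$ compositionally from three gadgets: an accurate ReLU approximation to squaring, a polynomial evaluator, and a Newton-based reciprocal. Intermediate target accuracies are chosen so that a final triangle-inequality argument assembles them into the global bound $\varepsilon$.

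First I would construct a ReLU approximation $\widetilde{\mathrm{sq}}$ to $x\mapsto x^2$ on $[0,1]$ by iterating the tent map $T(x)=2\min(x,1-x)$ in the style of Yarotsky/Telgarsky, obtaining uniform error $\delta$ with $\mathcal{O}(\ln(1/\delta))$ neurons. Polarization $xy=\tfrac14\bigl((x+y)^2-(x-y)^2\bigr)$ turns this into a multiplier $\widetilde{\mathrm{mult}}$ of the same order. A monomial of degree $\leq r$ is then evaluated by a balanced tree of $\leq r$ multiplications of depth $\mathcal{O}(\log r)$, and $p$ is realized by summing $\leq s$ such monomials, with every multiplier sized to keep propagated error below a target $\delta_p$. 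The naive accounting gives polynomial-part cost $\mathcal{O}(sr\ln(sr/\delta_p))$; reusing shared subproducts and exploiting that only $d$ input variables appear yields the alternative $\mathcal{O}(sdk^2\ln(dsr/\delta_p)^2)$ bound, which explains the $\min\{\cdot,\cdot\}$ in \eqref{lm11}.

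The hard step is the reciprocal $1/q$ on $q\in[2^{-k},1]$. I would employ Newton's iteration $y_{n+1}=y_n(2-q\,y_n)$, started from a crude initial guess, exploiting that it doubles the number of correct binary digits of $y_n$ per step. Since $\widetilde{1/q}$ can be as large as $2^k$, the polynomial must eventually be computed with accuracy of order $\varepsilon\,2^{-k}$ so that $(p-\tilde p)\widetilde{1/q}$ stays small, while the reciprocal itself only needs absolute accuracy $\varepsilon$ because it is multiplied by $|p|\leq 1$. The obstacle is that Newton's internal multiplications involve iterates of size $\sim 2^k$, so each internal multiplier must operate at absolute precision $2^{-\Omega(k+\log(1/\varepsilon))}$; this makes every iteration cost a polynomial in $k+\log(1/\varepsilon)$ neurons, and summing over the $\mathcal{O}(\log k+\log\log(1/\varepsilon))$ iterations required to reach the target precision produces the dominant $k^7\ln^3(1/\varepsilon)$ contribution. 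A single final call to $\widetilde{\mathrm{mult}}$ composes $\tilde p$ with $\widetilde{1/q}$, and the decomposition $p/q-\tilde p\cdot\widetilde{1/q}=p(1/q-\widetilde{1/q})+(p-\tilde p)\widetilde{1/q}$ closes the error argument and yields \eqref{lm12}. Calibrating the Newton accuracy against the $2^k$ amplification inherent in inverting values as small as $2^{-k}$ is the main technical obstacle throughout.
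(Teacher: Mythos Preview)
The paper does not prove this statement at all: Theorem~\ref{thm:rational-apprx-ReLU} is quoted directly from \cite{telgarsky2017neural} and used as a black box in the proofs of Theorems~\ref{thm:burgersoprtest1} and \ref{thm:1dadvecnetworkerr}. So there is nothing in the paper to compare your proposal against.

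That said, your sketch is essentially Telgarsky's original argument: iterated tent maps for $\widetilde{\mathrm{sq}}$, polarization for $\widetilde{\mathrm{mult}}$, a tree of multiplications for monomials, Newton's iteration for the reciprocal on $[2^{-k},1]$, and a final triangle-inequality splitting $p/q-\tilde p\cdot\widetilde{1/q}=p(1/q-\widetilde{1/q})+(p-\tilde p)\widetilde{1/q}$. Your identification of the main obstacle---that Newton iterates have magnitude $\sim 2^k$, forcing the internal multipliers to carry $\mathcal{O}(k+\log(1/\varepsilon))$ bits of absolute precision---is exactly what drives the $k^7\ln^3(1/\varepsilon)$ term. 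If you want to actually recover the stated bound you would need to be more careful about the iteration count and the per-step multiplier cost (the exponent $7$ comes from a specific accounting in Telgarsky's Lemma~3.5), but the structure of your plan is correct.
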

Combining Theorem \ref{thm:oprterror1d} and Theorem \ref{thm:rational-apprx-ReLU} readily leads  to  Theorem \ref{thm:burgersoprtest1}, which is used to show the error estimate for branch networks.

\begin{thm}\label{thm:burgersoprtest1}
Let $u_0\in \mathcal{S}$ be a piecewise linear function. Let $\mathcal{G}(u_0)$ be the solution operator of the Burgers equation  \eqref{eqtburgers}. % Suppose $u_0$ satisfies \eqref{u0condition}. 
	%For any $\mathbf{x}\in (-\pi,\pi)\times(0,\infty)$, there exist a ReLU network  $g^{\mathcal{N}}(\mathbf{u}_{0,m}; \Theta_{\mathbf{x}})$ of size $\abs{\Theta}=\mathcal{O}(m^2\ln(m) )$ and a uniform constant $C=C(\kappa, M_0, M_1)$ such that 
	Then there exist a ReLU network  $g^{\mathcal{N}}(\mathbf{u}_{0,m}; \Theta)$ of size $\abs{\Theta}=\mathcal{O}(m^2\ln(m) )$ and a uniform constant $C=C(\kappa, M_0, M_1)$, such that for any $\mathbf{x}\in [-\pi,\pi)\times(0,\infty)$, there exists a set of parameters $\Theta_{\mathbf{x}}$, s.t.
	\begin{eqnarray*}
		\Big| \mathcal{G}(u_0)(\mathbf{x}) - g^{\mathcal{N}}(\mathbf{u}_{0,m}; \Theta_{\mathbf{x}})\Big| 
	\leq \Big| \mathcal{G}(u_0)(\mathbf{x}) - \cG_m  (\mathbf{u}_{0,m}; \mathbf{x}) \Big|  + 
	\Big|  \cG_m  (\mathbf{u}_{0,m}; \mathbf{x})- g^{\mathcal{N}}(\mathbf{u}_{0,m}; \Theta_{\mathbf{x}}) \Big|	 \leq Ch.
	\end{eqnarray*}
\end{thm}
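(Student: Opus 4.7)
The plan is to combine Theorem \ref{thm:oprterror1d} for the first triangle-inequality summand with an explicit ReLU construction approximating the finite-dimensional surrogate operator $\cG_m$ for the second. The first term is bounded by $Ch$ directly from Theorem \ref{thm:oprterror1d}, so the whole task reduces to producing a ReLU network $g^{\mathcal{N}}(\mathbf{u}_{0,m};\Theta_{\mathbf{x}})$ that approximates $\cG_m(\mathbf{u}_{0,m};\mathbf{x})$ to accuracy $\mathcal{O}(h)$ using only $\mathcal{O}(m^2 \ln m)$ nonzero parameters. I will build $g^{\mathcal{N}}$ as a composition of two subnetworks: an ``encoding'' subnetwork mapping $\mathbf{u}_{0,m}$ to an approximation of $(v_0^0,\ldots,v_{m-1}^0)$, and a ``rational'' subnetwork that evaluates a ReLU approximation of $\tilde{\cG}_m$.

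For the rational subnetwork I invoke Theorem \ref{thm:rational-apprx-ReLU}. Since $\|u_0\|_{L^\infty}\le M_0$, the exponents $\log v_j^0 = -\frac{1}{4\kappa}\sum_{i=1}^j (u_{0,i}+u_{0,i-1})h_i$ satisfy $|\log v_j^0|\le \pi M_0/\kappa$, so each $v_j^0$ lies in a fixed interval $[\alpha,\beta]\subset(0,\infty)$ independent of $m$. After affinely rescaling the inputs into $[0,1]^m$ and dividing numerator and denominator of $\tilde{\cG}_m$ by a common normalizing constant depending only on $\mathbf{x}$, the numerator maps $[0,1]^m$ into $[-1,1]$ and the denominator into $[2^{-k},1]$ for some $k=\mathcal{O}(M_0/\kappa)$ independent of $m$. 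With $d=r=s=m$, $k=\mathcal{O}(1)$, and target accuracy $\varepsilon=h$, Theorem \ref{thm:rational-apprx-ReLU} yields a ReLU subnetwork of size
\[
\mathcal{O}\bigl(k^7 \ln^3(1/h) + m^2 \ln(m/h)\bigr) = \mathcal{O}(m^2 \ln m),
\]
where I have used $h\sim 1/m$.

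The encoding subnetwork must produce approximations of $m$ exponentials of bounded linear functions of $\mathbf{u}_{0,m}$, which is standard: each exponential on a compact interval can be realized by a ReLU subnetwork of size $\mathcal{O}(\ln^2(1/\eta))$ for target accuracy $\eta$, giving a total of $\mathcal{O}(m\,\mathrm{polylog}\,m)$, which is subdominant. Composing the two subnetworks yields total error $\varepsilon + \mathrm{Lip}(\tilde{\cG}_m)\,\|v^0-\tilde v^0\|_\infty$; choosing $\eta$ a small polynomial fraction of $h/\mathrm{Lip}(\tilde{\cG}_m)$ costs only an extra $\mathrm{polylog}\,m$ factor, still inside the $\mathcal{O}(m^2\ln m)$ budget.

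The main obstacle is precisely this bookkeeping of error propagation: I must verify that $\mathrm{Lip}(\tilde{\cG}_m)$, computed with respect to perturbations of the $v_j^0$, grows at most polynomially in $m$. This follows from the quotient rule together with the lower bound on the denominator and explicit bounds on the coefficients $c_j^1(\mathbf{x}),\,c_j^2(\mathbf{x})$ (which are integrals of the heat kernel and its derivative against piecewise linear basis functions, hence controlled in terms of $t,\kappa$). Once this polynomial Lipschitz bound is in hand, the rescaling, normalization, and subnetwork composition are routine, and triangle-inequality combination with Theorem \ref{thm:oprterror1d} delivers the claimed $\mathcal{O}(h)$ error with $|\Theta|=\mathcal{O}(m^2\ln m)$.
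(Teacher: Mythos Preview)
Your overall strategy is correct and close to the paper's: same triangle-inequality split, same invocation of Theorem~\ref{thm:rational-apprx-ReLU} with $r=s=m$ and $k=\mathcal{O}(1)$. The genuine difference is how you handle the map $\mathbf{u}_{0,m}\mapsto v^0$. The paper does not approximate the exponentials by a ReLU subnetwork at all; it replaces each factor $V_i=\exp(-(u_{0,i}+u_{0,i-1})h_i/4\kappa)$ by its first-order Taylor polynomial $l^V_i=1-(u_{0,i}+u_{0,i-1})h_i/4\kappa$, which is an exact affine layer. This gives $|V_i-l^V_i|=\mathcal{O}(h^2)$ and, crucially, $\bigl|\prod_{j\le i}V_j-\prod_{j\le i}l^V_j\bigr|\le Ch\,v_i^0$, i.e.\ a \emph{multiplicative} $\mathcal{O}(h)$ perturbation of each $v_i^0$. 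A one-line lemma (Lemma~\ref{lem:rtnfuncerr}) then says that a multiplicative $\mathcal{O}(h)$ perturbation of numerator and denominator changes the quotient by $\mathcal{O}(h)$ times a bound on the quotient itself, and that bound is $M_0+\mathcal{O}(h)$ by the maximum principle---uniformly in $\mathbf{x}$.

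Your crude Lipschitz route has a gap precisely at this uniformity. The coefficients $c_j^1(\mathbf{x})$ involve integrals of $\partial_x\mathcal{K}$ and scale like $(\kappa t)^{-1/2}$ as $t\to 0^+$, so $\mathrm{Lip}(\tilde{\cG}_m)$ is not bounded uniformly in $t$. Since the theorem asserts a \emph{single} architecture of size $\mathcal{O}(m^2\ln m)$ valid for all $\mathbf{x}\in[-\pi,\pi)\times(0,\infty)$, choosing $\eta=h/\mathrm{Lip}(\tilde{\cG}_m)$ forces your encoding subnetwork to grow as $t\to 0^+$. The fix is exactly the multiplicative observation above: arrange $|\tilde v_j^0-v_j^0|\le Ch\,v_j^0$ (which needs only $\eta=\mathcal{O}(h)$, independent of $\mathbf{x}$, since $v_j^0\ge\alpha>0$), and then the $c_j^1,c_j^2$ dependence cancels in the ratio. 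With that modification your construction goes through; the paper's linearization trick is simply a cleaner way to reach the same conclusion without ever invoking a ReLU approximation of $\exp$.
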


\begin{rem}
	Let  $u_0(x) = \sum_{i=0}^{\mathsf{m}-1} u_i L_i(x)$ be a  piecewise   function, where $\mathsf{m} $ is a fixed number and $\mathsf{m} \neq m$ generally. According to the asymptotic result in Theorem \ref{thm:burgersoprtest1}, we still need  $m\to \infty$ to guarantee  convergence. However, since $\mathsf{m} \ll m$ when $m\rightarrow \infty$, one can combine like terms (in terms of $u_i$) in \eqref{defrational}, so that there are only $\mathsf{m}$ variables as the inputs of the rational function $\cG_m$. Then by \eqref{lm11} again, the size of the network can be reduced to $\mathsf{m} \cdot \mathcal{O}(m \ln(m)^2)$, where $d=\mathsf{m}$, $s=m$ and $r=m$. The same remark can be applied to all of the examples in this section. 
\end{rem}

Theorem \ref{thm:main_burgers} follows from the above theorems. 
%Then, we are ready to state the main result, Theorem \ref{thm:main_burgers}.
 The detailed proofs of Theorem \ref{thm:oprterror1d}, Theorem \ref{thm:burgersoprtest1} and Theorem \ref{thm:main_burgers} are presented in  Section \ref{chap5_proofs}.

\begin{rem}
	Solution operators from a large class of such PDEs can be considered.	Many nonlinear PDEs  can be converted to linear problems by some transformations, such as 
	the Cole-Hopf transformation, the hyperbolic ansatz, and the improved tanh-coth method, see, e.g.,  \cite{BookAmes, polyanin2011handbook}.

\end{rem} %(linearizable)

\subsection{1D Burgers equation with Dirichlet boundary condition}
%\begin{rem}[Dirichlet boundary condition]
Consider the Burgers equation with Dirichlet boundary condition $u(a,t)=F_0(t)$ and 
$u(b,t)=F_1(t)$ where $-\infty<a  < b<\infty$. As in \cite{RanChang91}, the Burgers equation can be transformed into the heat equation with the Neumann/Robin boundary condition by the Cole-Hopf transformation, i.e.,
\begin{eqnarray}\label{burger-ch-dbc}
u = \frac{-2\kappa v_x}{v}, \quad \hbox{where} \quad 
\left\{\begin{array}{ll}
v_t=\kappa v_{xx}, \quad x\in (a,b),\\
v(x,0) = v_0(x) = \exp\Big(-\frac{1}{2\kappa} \int_{a}^x u_0(s) ds \Big),\\
2\kappa v_x(a,t) + F_0(t)v(a,t)  =  2\kappa v_x(b,t) + F_1(t)v(b,t)=0,
\end{array}\right. 
\end{eqnarray}
which admits a solution of the following form
\begin{equation*}
v(x,t) = \int_{\Real}\mathcal{K}(x-y,t) \phi_0(y)\,dy - \int_0^t \mathcal{K}(x-a,t-s)\Psi_1(s)\,ds +\int_0^t \mathcal{K}(x-b,t-s)\Psi_2(s)\,ds,
\end{equation*}
where $\Psi_1(t)$ and $\Psi_2(t)$ depend linearly on the initial condition $v_0$ (or $\{ v_j^0\}_{j=0}^{m-1}$), see  \cite{RanChang91} for details. %Notice that the second and third terms don't involve quadrature approximations because they are temporal integrals, so 
The construction of $\cG_m$ in this case  is similar to   \eqref{defrational}. and hence, similar results to those in Section \ref{burgers_periodic} can be obtained.
%Then, for the homogeneous Dirichlet boundary condition of $u$ (corresponding to the Neumann boundary condition of $v$), the similar results may be proved by the same arguments in the previous section combined with the comparison principle for the heat equation with Neumann boundary condition.
%The similar results can be proved by using the same arguments in the previous section.
%\end{rem}

\subsection{1D Burgers equations with forcing}
Consider    the  following Burgers equation  with a forcing term
\begin{eqnarray} 
u_t + u  u_x = \kappa u_{xx}+f(x,t), \ \ x \in \mathbb{R}, 
\end{eqnarray}
with initial condition $u_0(x)$. By the Cole-Hopf transformation, we obtain a linear heat equation: 
\begin{eqnarray}\label{burgers-ch-forcing}
\quad u = \frac{-2\kappa v_x}{v}, \quad \hbox{where} \quad 
\left\{\begin{array}{ll}
v_t= \kappa v_{xx}-F(x,t)v, \  F(x,t) =\frac{1}{2\kappa} \int_{-\infty}^x f(y,t)dy,\ x\in \mathbb{R},\\
v(x,0) = v_0(x) = \exp\Big(-\frac{1}{2\kappa} \int_{-\infty}^x u_0(s) ds \Big).
\end{array}\right. 
\end{eqnarray}
Assume that  $F(x,t)$ has a lower bound and 
$
\int_{0}^t \sup_{x} \abs{F(x,t)}\,ds<\infty$.  
By the Feynman-Kac formula,  
\begin{eqnarray} 
v(x,t) =\mean{\exp\big(- \int_{0}^t F(x+ \sqrt{2\kappa}B_s,s)\,ds\big) v_0(x+\sqrt{2\kappa}B_t)},
\end{eqnarray}
where $B_t$ is a standard Brownian motion with $B_0=0$. And by a direct calculation, we have 
\begin{equation*}
v_x = \mean{\frac{-1}{2\kappa}\exp\big(- \int_{0}^t F(x+ \sqrt{2\kappa}B_s,s)\,ds\big) [ u_0(x+\sqrt{2\kappa}B_t)+  \int_{0}^t f(x+ \sqrt{2\kappa}B_s,s)\,ds]v_0(x+\sqrt{2\kappa}B_t)}.
\end{equation*}
Suppose that $u_0(x)$ is   a piecewise linear polynomial and $f(x,t)$ can be well approximated by piecewise linear or piecewise constant polynomials, i.e., 
$u_0(x) =\sum_{i=1}^I u_{0,i}L_i(x)$, $f(x,t) = \sum_{j=1}^J f(x_j,t_j) e_{j}(x,t)$. 
Denote 
$ \tilde{e}_j^* = \int_{0}^{t^*} \int_{-\infty}^{x^*} e_j(y+ \sqrt{2\kappa}B_s,s)\,dy\,ds $ and 
$\tilde{L}_i^*=\int_{-\infty}^{x^*+\sqrt{2\kappa}B_{t^*}} L_i(y)\,dy$. Then 
\begin{eqnarray*} 
	v(x^*,t^*) 
	&=&  \mean{\exp\big(- \sum_{j=1}^J \frac{f(x_j,t_j)}{2\kappa } \tilde{e}_j^* -\sum_{i=1}^I\frac{u_{0,i}}{2\kappa} \tilde{L}_i^*\big)}  + \mathcal{O}(h_x+h_t) \\
	&=&  \frac{1}{N}\sum_{l=1}^N \Big( \exp\big(- \sum_{j=1}^J \frac{f(x_j,t_j)}{2\kappa} \tilde{e}_j^*   (\omega_l) -\sum_{i=1}^I\frac{u_{0,i}}{2\kappa} \tilde{L}_i^*(\omega_l)\Big)  + \mathcal{O}(N^{-1/2})+ \mathcal{O}(h_x+h_t) .  
\end{eqnarray*}
Here $\tilde{e}_j^*   (\omega_l)$ ($\tilde{L}_i^*(\omega_l)$) is a  realization of $\tilde{e}_j^*  $ ($\tilde{L}_i^*$) at one  trajectory. 
Similarly, we have 
\begin{eqnarray*}
	v_x (x^*,t^*)&= & -\frac{1}{2\kappa}\mean{\exp\big(- \sum_{j=1}^J \frac{f(x_j,t_j)}{2\kappa } \tilde{e}_j^* -\sum_{i=1}^I\frac{u_{0,i}}{2\kappa} \tilde{L}_i^*\big) (  u_0(x^*+\sqrt{2\kappa}B_{t^*}) +\int_0^{t^*}f(x^*+\sqrt{2\kappa}B_s),s)\,ds} \\
	&& + \mathcal{O}(h_x + h_t )\\
	&=& -\frac{1}{2\kappa} \mean{\exp\big(- \sum_{j=1}^J \frac{f(x_j,t_j)}{2\kappa } \tilde{e}_j^* -\sum_{i=1}^I\frac{u_{0,i}}{2\kappa} \tilde{L}_i^*\big) (  \sum_{i=1}^I u_{0,i}L_i^* +\sum_{j=1}^J f(x_j,t_j) \bar{e}_j^* } \\
	&& +\mathcal{O}(h_x   +  h_t ) \\
	&=& \mean{\exp\big(- \sum_{j=1}^J \frac{f(x_j,t_j)}{2\kappa } \tilde{e}_j^* -\sum_{i=1}^I\frac{u_{0,i}}{2\kappa} \tilde{L}_i^*\big) \times \\
		&& \big(  \frac{ \exp(\sum_{i=1}^I \frac{u_{0,i}L_i^*}{2\kappa} h_x)-1)}{h_x} +
		\frac{\exp(\sum_{j=1}^J f(x_j,t_j) \bar{e}_j^*h_t)-1 }{h_t}\big)}  
	  +\mathcal{O}(h_x+ h_t )\\
	&=&\frac{1}{N} \sum_{l=1}^N\exp\big(- \sum_{j=1}^J \frac{f(x_j,t_j)}{2\kappa } \tilde{e}_j^* -\sum_{i=1}^I\frac{u_{0,i}}{2\kappa} \tilde{L}_i^*\big) (\omega_l)\times \\
	&& \big(  \frac{ \exp(\sum_{i=1}^I \frac{u_{0,i}L_i^*}{2\kappa} h_x)-1)}{h_x} +
	\frac{\exp(\sum_{j=1}^J f(x_j,t_j) \bar{e}_j^*h_t)-1 }{h_t}\big) (\omega_l)\\
	&& +\mathcal{O}(h_x )   + \mathcal{O}( h_t)   + \mathcal{O}( N^{-1/2}) 
\end{eqnarray*}
Here $L_i^*=L_i(x^*+\sqrt{2\kappa}B_{t^*})$, $\bar{e}_j^*= \int_0^{t^*}f(x^*+\sqrt{2\kappa}B_s),s)\,ds$. 

Combining the above estimates and  
by Lemma \ref{lem:rtnfuncerr}, we have 
\begin{eqnarray*} 
	&& u (x^*,t^*) 
	= -\frac{1}{2\kappa} \frac{v_x (x^*,t^*)}{	v (x^*,t^*)}\\
	&=&  -\frac{1}{2\kappa} \sum_{l=1}^N  
	\frac{\exp\big(- \sum_{j=1}^J \frac{f(x_j,t_j)}{2\kappa } \tilde{e}_j^* -\sum_{i=1}^I\frac{u_{0,i}}{2\kappa} \tilde{L}_i^*\big) (\omega_l) \big(  \frac{ \exp(\sum_{i=1}^I \frac{u_{0,i}L_i^*}{2\kappa} h_x)-1)}{h_x} +
		\frac{\exp(\sum_{j=1}^J f(x_j,t_j) \bar{e}_j^*h_t)-1 }{h_t}\big) (\omega_l) }
	% denominator
	{ \sum_{l=1}^N \Big( \exp\big(- \sum_{j=1}^J \frac{f(x_j,t_j)}{2\kappa} \tilde{e}_j^*   (\omega_l) -\sum_{i=1}^I\frac{u_{0,i}}{2\kappa} \tilde{L}_i^*(\omega_l)\Big) } \\
	&&+ \mathcal{O}(N^{-1/2})+ \mathcal{O}(h_x + h_t ).
\end{eqnarray*}	

Similar to the treatments of Burgers equation without forcing term, we replace 
$\exp\big(- \sum_{j=1}^J \frac{f(x_j,t_j)}{2\kappa} \tilde{e}_j^*   (\omega_l)$ with 
$\prod_{j=1}^J  (1-    \sum_{j=1}^J \frac{f(x_j,t_j)}{2\kappa} \tilde{e}_j^*   (\omega_l) )$, which leads to an error of $\mathcal{O}(h_x+h_t)$. 
The solution becomes a summation of $N$ rational polynomials, each of which 
has $N$ terms in the denominator with  all the term of order $I+J$, $J= I_x\times I_t$ and $I=I_x$. Then by Theorem \ref{thm:rational-apprx-ReLU} and similar to the proof of Theorem \ref{thm:burgersoprtest1}, we have 
a ReLU network of size 
$\mathcal{O}(N(I+J)^2\ln (I+J)/\epsilon)$ to approximate $u$.

Taking $\epsilon\simeq I_x^{-1} \simeq  I_t^{-1} \approx N^{-1/2}$. Then   the network size is 
$\mathcal{O}(\epsilon^{-6}\ln(\epsilon^{-3}))$.
If we do not introduce the discretization in $t$, then $J=I=I_x$ and 
the network size can be reduced to  
$\mathcal{O}(\epsilon^{-4}\ln(\epsilon^{-2}))$. % (the order of convergence is then $1/4$).   
If we  use alternative numerical integration methods, e.g.,  quasi-Monte Carlo methods, then  
$N=\epsilon^{-1}$ and   the network size can be reduced to  
$\mathcal{O}(\epsilon^{-3}\ln(\epsilon^{-2}))$. % (the order of convergence is then $1/3$). 

\begin{rem}
	The condition  we need in the Cole-Hopf transformation is 
	%\begin{equation*}
	$\int_{\Real} \exp(-\epsilon x^2 ) \abs{v_0(x)}\,dx <\infty$, which can be satisfied when $u_0(x)$ is a piecewise linear polynomial as assumed. Here 
	$\epsilon>0$.
	%\end{equation*} 
\end{rem}

\subsection{2D Burgers equation}
Consider the 2D Burgers equation with periodic boundary condition:
\begin{eqnarray}\label{eqtburgers2d}
\left\{\begin{array}{l}
u_{t}+u u_{x}+v u_{y}-\kappa \left(u_{x x}+u_{y y}\right)=0, \quad (x,y)\in (-\pi,\pi)\times(-\pi,\pi),  \\
v_{t}+u v_{x}+v v_{y}-\kappa \left(v_{x x}+v_{y y}\right)=0, \quad (x,y)\in (-\pi,\pi)\times(-\pi,\pi), \\
u(x-\pi,y) = u(x+\pi,y), \ u(x,y-\pi) = u(x,y+\pi),\\
v(x-\pi,y) = v(x+\pi,y), \ v(x,y-\pi) = v(x,y+\pi),\\
u(x,y,0) = u_0(x,y),\\ 
v(x,y,0) = v_0(x,y).\\ 
\end{array}\right.
\end{eqnarray}
where $u_0$ and $v_0$ satisfy the consistent condition $\partial_y u_0 = \partial_x v_0=w_0$, i.e., they are of the forms 
\begin{eqnarray*}
u_0(x,y) = \int_{-\pi}^y w_0(x,s)ds + \tilde{u}_0(x) \quad \hbox{and} \quad v_0(x,y) =  \int_{-\pi}^x w_0(r,y)dr + \tilde{v}_0(y).
\end{eqnarray*}
 Then we may apply the Cole-Hopf transformation, see e.g., \cite{Fle83},
\begin{eqnarray}\label{burgersexcslt2d}
u=\mathcal{G}^u(u_0,v_0):= -2\kappa \frac{\phi_x}{\phi}, \quad v=\mathcal{G}^v(u_0,v_0):= -2\kappa \frac{\phi_y}{\phi},  
\end{eqnarray}
where
\begin{equation*}
\left\{\begin{array}{l}
\phi_t - \kappa \Delta \phi =0,\\
\phi_0(x,y) = \exp\Big( -\frac{1}{2\kappa} \Big[\int_{-\pi}^x \int_{-\pi}^y w_0(s,r)drds + \int_{-\pi}^x \tilde{u}_0(s)ds + \int_{-\pi}^y \tilde{v}_0(r) dr \Big]\Big).
\end{array}\right.
\end{equation*}
Define
\begin{eqnarray*}
\mathcal{S} = \mathcal{S}(M_0,M_1):=\{ (u_0,v_0) \in W^{1,\infty}\times W^{1,\infty}: \| \phi_0\|_{L^{\infty}} \leq M_0, \| \nabla \phi_0\|_{L^{\infty}} \leq M_1 \}.
\end{eqnarray*}
Let $\mathcal{I}^0_m$ and $\mathcal{I}^1_m$ be the piecewise constant interpolation at the left bottom corner and bilinear interpolation on each rectangular element $[x_{j_1-1}, x_{j_1})\times [x_{j_2-1}, x_{j_2})$, $j_1, j_2 = 1,2,\cdots, \sqrt{m}$, respectively. Denote $\mathbf{x} = (x,y)^\top$ and $\mathbf{s}=(s,r)^\top$. Then we define
\begin{eqnarray}\label{rationaloprt2d}
\cG_m^u(\mathbf{u}_{m,0},\mathbf{v}_{m,0};\mathbf{x},t)&:=&-\frac{2\kappa \int_{\mathbb{R}} \int_{\mathbb{R}} \partial_x \mathcal{K}(\mathbf{x},\mathbf{s},t) (\mathcal{I}^1_m\phi_0)(\mathbf{s}) dsdr }{\int_{\mathbb{R}}\int_{\mathbb{R}} \mathcal{K}(\mathbf{x},\mathbf{s},t)( \mathcal{I}^0_m \phi_0)(\mathbf{s}) dsdr}, \nonumber \\
\cG_m^v(\mathbf{u}_{m,0},\mathbf{v}_{m,0};\mathbf{x},t)&:=&-\frac{2\kappa \int_{\mathbb{R}} \int_{\mathbb{R}} \partial_y \mathcal{K}(\mathbf{x},\mathbf{s},t) (\mathcal{I}^1_m\phi_0)(\mathbf{s}) dsdr }{\int_{\mathbb{R}}\int_{\mathbb{R}} \mathcal{K}(\mathbf{x},\mathbf{s},t)( \mathcal{I}^0_m \phi_0)(\mathbf{s}) dsdr}.
\end{eqnarray}
where $ \mathcal{K}(\mathbf{x},\mathbf{s},t) = \frac{1}{4\pi \kappa t} \exp\Big(-\frac{|\mathbf{x}-\mathbf{s}|^2}{4\kappa t} \Big)$ is the 2-D heat kernel. Then we may derive the following results similar to the 1-D case \eqref{eqtburgers}.
\begin{lem}\label{cor:2dburgersbrancherro-apprx}
	Let $(u_0,v_0)\in \mathcal{S}$. Let $\mathcal{G}^u$, $\mathcal{G}^v$ be the solution operators for \eqref{eqtburgers2d} defined in \eqref{burgersexcslt2d}, and $\cG_m^u$, $\cG_m^v$ be defined in \eqref{rationaloprt2d}. Suppose $h = \max_j |x_j - x_{j-1}|$ is small enough. Then there is a uniform constant $C = C(\kappa, M_0, M_1)$, s.t. for any $(\mathbf{x},t)\in (-\pi,\pi)^2\times (0,\infty)$, we  have 
	\begin{eqnarray*}
		\Big| \mathcal{G}^u(u_0,v_0)(\mathbf{x},t) - \cG_m^u  (\mathbf{u}_{0,m},\mathbf{v}_{0,m}; \mathbf{x},t) \Big|+\Big| \mathcal{G}^v(u_0,v_0)(\mathbf{x},t) - \cG_m^v  (\mathbf{u}_{0,m},\mathbf{v}_{0,m}; \mathbf{x},t) \Big| \leq Ch.
	\end{eqnarray*}
\end{lem}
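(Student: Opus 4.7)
The plan is to adapt the one-dimensional argument used for Theorem \ref{thm:oprterror1d} to the two-dimensional setting, with the Cole--Hopf reduction \eqref{burgersexcslt2d} doing the heavy lifting. From $(u_0,v_0)\in \mathcal{S}$ one first gets $\|\phi_0\|_{L^\infty}\le M_0$, $\|\nabla \phi_0\|_{L^\infty}\le M_1$, and, since $\phi_0$ is the exponential of a bounded quantity, a uniform positive lower bound $\phi_{\min}=\phi_{\min}(M_0)>0$. This lower bound is inherited by $\phi(\mathbf{x},t)=\iint \mathcal{K}\,\phi_0$ and by $\phi_0^\ast(\mathbf{x},t):=\iint \mathcal{K}\,\mathcal{I}^0_m\phi_0$, because piecewise constant interpolation preserves range and the 2D heat kernel is a probability density. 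This bound is what will keep the quotient representation safely away from a singularity.

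Next, write $P=\partial_x\phi$, $Q=\phi$, $P_1=\iint \partial_x\mathcal{K}\,\mathcal{I}^1_m\phi_0$, $Q_0=\phi_0^\ast$, and decompose
\begin{eqnarray*}
\mathcal{G}^u(u_0,v_0)-\cG_m^u(\mathbf{u}_{0,m},\mathbf{v}_{0,m})
= -2\kappa\left(\frac{P-P_1}{Q}+\frac{P_1(Q-Q_0)}{Q\,Q_0}\right),
\end{eqnarray*}
with the analogous identity for $\mathcal{G}^v-\cG_m^v$ using $\partial_y$ in place of $\partial_x$. Because $Q,Q_0\ge \phi_{\min}$ uniformly, it suffices to bound $|P-P_1|$, $|Q-Q_0|$ and $|P_1|$ in $L^\infty$, uniformly in $(\mathbf{x},t)\in (-\pi,\pi)^2\times(0,\infty)$.

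The easy term is $|Q-Q_0|\le \|\phi_0-\mathcal{I}^0_m\phi_0\|_{L^\infty}\iint \mathcal{K}\le C M_1 h$, since piecewise constant interpolation of a Lipschitz function is $O(h)$ in $L^\infty$ and the heat kernel integrates to one. The harder terms are $P-P_1$ and $P_1$, because $\iint|\partial_x\mathcal{K}|\sim t^{-1/2}$ is not integrable uniformly in $t$. I would remove this singularity by integrating by parts, exploiting the built-in periodicity captured by the sums over $l\in \mathbb{Z}$: formally $P=\iint \mathcal{K}\,\partial_x\phi_0$ and $P_1=\iint \mathcal{K}\,\partial_x(\mathcal{I}^1_m\phi_0)$, which immediately gives $|P_1|\le \|\partial_x(\mathcal{I}^1_m\phi_0)\|_{L^\infty}\le \|\partial_x\phi_0\|_{L^\infty}\le M_1$ and reduces $|P-P_1|$ to the $L^\infty$ distance between $\partial_x\phi_0$ and its bilinear-interpolation derivative. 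Combining the three estimates with the lower bound on $Q,Q_0$ produces the asserted $Ch$ bound with $C=C(\kappa,M_0,M_1)$, and the same argument with $\partial_y$ in place of $\partial_x$ handles $\mathcal{G}^v-\cG_m^v$.

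The main obstacle is the last step: controlling $\|\partial_x\phi_0-\partial_x\mathcal{I}^1_m\phi_0\|_{L^\infty}$ only from $\phi_0\in W^{1,\infty}$ is delicate, since bilinear interpolation of a merely Lipschitz function need not have $O(h)$ derivative error. Two workable remedies are (a) tighten $\mathcal{S}$ to include a mixed second-derivative bound $\|\partial_{xy}\phi_0\|_{L^\infty}\le M_2$, which is natural in view of the consistency condition $\partial_y u_0=\partial_x v_0=w_0$ already embedded in the construction of $\phi_0$; or (b) avoid the integration by parts for $P-P_1$ entirely and instead split $\iint =\iint_{|\mathbf{x}-\mathbf{s}|\le \sqrt{t}}+\iint_{|\mathbf{x}-\mathbf{s}|>\sqrt{t}}$, using local linear accuracy of $\mathcal{I}^1_m\phi_0$ on the small ball (gaining one factor of $h$) and Gaussian decay on the far field to absorb the $t^{-1/2}$ factor. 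Either route gives $|P-P_1|\le Ch$, closing the argument uniformly in time.
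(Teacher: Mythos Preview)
Your proposal is essentially the paper's intended argument: the paper gives no separate proof here but says the result is ``similar to the 1-D case,'' i.e.\ it expects you to rerun the proof of Theorem~\ref{thm:oprterror1d}, and that is what you do. A couple of points are worth flagging.

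First, your ``main obstacle'' is not an obstacle. You worry that $\mathcal{S}$ only gives $\phi_0\in W^{1,\infty}$, but the Cole--Hopf structure already gives more: $\partial_x\phi_0=-\tfrac{1}{2\kappa}\phi_0\,u_0$ and $\partial_y\phi_0=-\tfrac{1}{2\kappa}\phi_0\,v_0$, so
\[
\partial_{xx}\phi_0=\phi_0\Bigl(\tfrac{u_0^2}{4\kappa^2}-\tfrac{\partial_x u_0}{2\kappa}\Bigr),\qquad
\partial_{xy}\phi_0=\phi_0\Bigl(\tfrac{u_0v_0}{4\kappa^2}-\tfrac{w_0}{2\kappa}\Bigr),
\]
and likewise for $\partial_{yy}\phi_0$. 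Since $(u_0,v_0)\in W^{1,\infty}\times W^{1,\infty}$ is already part of $\mathcal{S}$, all second derivatives of $\phi_0$ are bounded pointwise by a constant times $\phi_0$. This is exactly how the 1D proof handles $\partial_{xx}v_0$, and it closes the $|P-P_1|$ estimate after integration by parts without either of your workarounds (a) or (b).

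Second, a cosmetic difference from the paper's 1D argument: rather than invoking an explicit lower bound $\phi_{\min}$ and then separately bounding $|P_1|$, the paper writes the difference with $u(\mathbf{x},t)$ as one factor (bounded via the maximum principle), and bounds each interpolation error \emph{pointwise as a multiple of $\mathcal{I}^0_m\phi_0$}, e.g.\ $|\mathcal{I}^0_m\phi_0-\phi_0|\le Ch\,\mathcal{I}^0_m\phi_0$ and $|\partial_x(\mathcal{I}^1_m\phi_0-\phi_0)|\le Ch\,\mathcal{I}^0_m\phi_0$. By positivity of the heat kernel the denominator $\iint\mathcal{K}\,\mathcal{I}^0_m\phi_0$ then cancels cleanly. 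Your lower-bound route is equally valid but slightly less economical.
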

 
\begin{lem}\label{cor:2dburgersbrancherror}
	Let $(u_0,v_0)\in \mathcal{S}$. Let $\mathcal{G}^u$, $\mathcal{G}^v$ be the solution operators for \eqref{eqtburgers2d} defined in \eqref{burgersexcslt2d}. Then exist ReLU networks 
	$g^{\mathcal{N}}_u(\mathbf{u}_{0,m}, \mathbf{v}_{0,m}; \Theta_u)$
	 of size $\abs{\Theta_u}=\mathcal{O}(m^2 \ln(m) )$ and
	 $g^{\mathcal{N}}_v(\mathbf{u}_{0,m}, \mathbf{v}_{0,m}; \Theta_v)$
	 of size $\abs{\Theta_v}=\mathcal{O}(m^2 \ln(m) )$ and a uniform constant $C=C(\kappa, M_0, M_1)$, such that for any $(\mathbf{x},t)\in (-\pi,\pi)^2\times(0,\infty)$, there exist parameters $\Theta_u^{(\mathbf{x},t)}$ and $\Theta_v^{(\mathbf{x},t)}$, s.t.
	\begin{eqnarray*}
		\Big| \mathcal{G}^u(u_0,v_0)(\mathbf{x},t) - g^{\mathcal{N}}_u(\mathbf{u}_{0,m}, \mathbf{v}_{0,m}; \Theta_u^{(\mathbf{x},t)})\Big| + \Big| \mathcal{G}^v(u_0,v_0)(\mathbf{x},t) - g^{\mathcal{N}}_v(\mathbf{u}_{0,m}, \mathbf{v}_{0,m}; \Theta_v^{(\mathbf{x},t)})\Big| \leq Ch.
	\end{eqnarray*}
\end{lem}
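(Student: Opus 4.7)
The plan is to mirror the proof of Theorem~\ref{thm:burgersoprtest1} (the $1$D case) and combine it with Lemma~\ref{cor:2dburgersbrancherro-apprx}, which already furnishes the approximation $\cG_m^u,\cG_m^v$ of $\cG^u,\cG^v$ to accuracy $Ch$. What remains is to certify that $\cG_m^u$ and $\cG_m^v$, evaluated at any fixed $(\mathbf{x},t)$, are rational functions of a vector of $\mathcal{O}(m)$ elementary variables, with both numerator and denominator having controlled degree, number of monomials, and a uniform lower bound on the denominator. Once this structure is in hand, Theorem~\ref{thm:rational-apprx-ReLU} produces a ReLU network of the required size.

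First I would introduce the elementary building blocks. Let the grid be $\sqrt{m}\times \sqrt{m}$ with nodes $(x_{j_1},y_{j_2})$. Using trapezoidal approximation of the integrals defining $\phi_0$, write
\begin{equation*}
\phi_0(x_{j_1},y_{j_2}) \;=\; \prod_{(k_1,k_2)\in R_{j_1,j_2}} V_{k_1,k_2},
\end{equation*}
where each $V_{k_1,k_2} = \exp\bigl(-\tfrac{1}{2\kappa}\, q_{k_1,k_2}(\mathbf{u}_{0,m},\mathbf{v}_{0,m})\bigr)$ is determined by a single quadrature cell, and $q_{k_1,k_2}$ is a fixed linear combination of a bounded number of entries of $(\mathbf{u}_{0,m},\mathbf{v}_{0,m})$. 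There are $\mathcal{O}(m)$ such factors $V_{k_1,k_2}$, and since $\|\phi_0\|_{L^\infty}\le M_0$ together with $(u_0,v_0)\in \mathcal{S}$ imply that the entire integral in the exponent stays in a fixed compact interval, each partial product $\phi_0(x_{j_1},y_{j_2})$ lies in $[m_0,M_0]$ for a uniform $m_0>0$. After substituting into the bilinear/piecewise-constant interpolants in \eqref{rationaloprt2d}, both numerator and denominator of $\cG_m^u$ and of $\cG_m^v$ become polynomials in the $\mathcal{O}(m)$ variables $\{V_{k_1,k_2}\}$ of total degree at most $\mathcal{O}(m)$ with $\mathcal{O}(m)$ monomials (one monomial per grid node, weighted by a coefficient that only depends on $(\mathbf{x},t)$ through fixed heat-kernel integrals).

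Next I would invoke Theorem~\ref{thm:rational-apprx-ReLU} with $d=\mathcal{O}(m)$, $s=\mathcal{O}(m)$, $r=\mathcal{O}(m)$, target accuracy $\varepsilon=h$, and $k=\mathcal{O}(\ln(1/m_0))=\mathcal{O}(1)$ (since the denominator is bounded away from zero by a constant depending only on $\kappa,M_0,M_1$). The bound \eqref{lm11}, using the $srk\ln(sr/\varepsilon)$ branch of the minimum, yields a ReLU network of size $\mathcal{O}(m^2\ln(m/h))=\mathcal{O}(m^2\ln m)$ that approximates $\cG_m^u(\mathbf{u}_{0,m},\mathbf{v}_{0,m};\mathbf{x},t)$ to within $Ch$; the same argument delivers a ReLU network for $\cG_m^v$. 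Since the $V_{k_1,k_2}$'s are themselves bounded compositions $\exp\circ\text{linear}$, one absorbs the approximation of $\exp$ on a compact interval (again by a ReLU network of size $\mathcal{O}(\ln(1/h))$) as a preprocessing block, which does not change the leading-order size. Denoting the composite networks $g_u^{\mathcal{N}}$ and $g_v^{\mathcal{N}}$, a triangle inequality combined with Lemma~\ref{cor:2dburgersbrancherro-apprx} then gives the claimed bound.

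The main obstacle is controlling the combinatorial growth when passing to $2$D: naively, $\phi_0$ at the $m$ grid points depends on $\mathcal{O}(m)$ local factors, but each nodal value is a product of up to $\mathcal{O}(m)$ of them, which would blow up either the total degree or the number of distinct monomials in the polynomial representation. The key is to choose the quadrature for $\int\!\!\int w_0$ and for the two one-dimensional boundary integrals so that $\phi_0(x_{j_1},y_{j_2})$ is a \emph{single} monomial in the $V_{k_1,k_2}$'s (rather than a sum), thereby keeping the total number of monomials in both numerator and denominator at $\mathcal{O}(m)$. Once this is arranged, the degree and monomial counts enter Theorem~\ref{thm:rational-apprx-ReLU} additively in the log, and the final $\mathcal{O}(m^2\ln m)$ size matches the $1$D case.
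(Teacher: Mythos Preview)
Your proposal is correct and follows essentially the same route as the paper's two-line proof: recognize that $\cG_m^u,\cG_m^v$ at a fixed $(\mathbf{x},t)$ are rational functions with $s,r=\mathcal{O}(m)$ and $k=\mathcal{O}(1)$, then invoke Theorem~\ref{thm:rational-apprx-ReLU} together with Lemma~\ref{cor:2dburgersbrancherro-apprx}. Your write-up is more explicit about the multiplicative factorization $\phi_0(x_{j_1},y_{j_2})=\prod V_{k_1,k_2}$ and the exponential preprocessing, closely paralleling the paper's own 1D argument for Theorem~\ref{thm:burgersoprtest1} (there the paper linearizes each factor via $\exp(z)\approx 1+z$ rather than ReLU-approximating $\exp$, but either device works and contributes only lower-order size).
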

\begin{proof}
 Observe that $\cG_m^u$ and $\cG_m^v$ are rational functions with respect to $\phi_0(x_{j_1}, x_{j_2})$ with $r=m$ and $s = m$ and $k$ is a constant (using the notation in Theorem \ref{thm:rational-apprx-ReLU}). The conclusion then follows from Theorem 
 \ref{thm:rational-apprx-ReLU}. 
\end{proof}

 \begin{thm}
 	Let $(u_0,v_0)\in \mathcal{S}$. Let $\mathcal{G}^u$, $\mathcal{G}^v$ be the solution operators for \eqref{eqtburgers2d}. Then exist ReLU branch networks 
 	$g^{\mathcal{N}}_u(\mathbf{u}_{0,m}, \mathbf{v}_{0,m}; \Theta_u^{(i)})$ 
 	%of size $\abs{\Theta^u}=\mathcal{O}(m^2 \ln(m) )$ 
 	and
 	$g^{\mathcal{N}}_v(\mathbf{u}_{0,m}, \mathbf{v}_{0,m}; \Theta_v^{(i)})$ of sizes $|\Theta_u^{(i)}|$, $|\Theta_v^{(i)}|=\mathcal{O}(m^2\ln(m))$, $i=1,\cdots, p$, respectively, 
 	%of size $\abs{\Theta^v}=\mathcal{O}(m^2 \ln(m) )$
 	and  ReLU  trunk networks $f ^{\mathcal{N}}( x ;\theta^{(k)})$ of size $ \mathcal{O}(1)$,   $k=1,\cdots, p$, such that	
 	\begin{eqnarray*}
 		&&	\norm{\cG^u (u_0,v_0)-\cG^u_{\bN}(\mathbf{u}_{0,m},\mathbf{v}_{0,m} ) }_{L^{\infty}} + \norm{\cG^v (u_0,v_0 )-\cG^v_{\bN}(\mathbf{u}_{0,m}, \mathbf{v}_{0,m}) }_{L^{\infty}}\\
 		&& \leq C \Big( p^{-\frac{1}{2}} + m^{-\frac{1}{2}} + \abs{\Theta_{u}^{(i)}}^{-\frac{1}{2}+\epsilon}  + \abs{\Theta_{v}^{(i)}}^{-\frac{1}{2}+\epsilon}\Big),
 	\end{eqnarray*}
 	where  $\cG^v_{\bN}(\mathbf{u}_{0,m},\mathbf{v}_{0,m} )$  and 
 	$\cG^u_{\bN}(\mathbf{u}_{0,m},\mathbf{v}_{0,m} )$ are  of the form in \eqref{branchtrunkformula}, $\epsilon>0$ is arbitrarily small and $C >0$ is independent of $m$, $p$, $|\Theta_u^{(i)}|$, $|\Theta_v^{(i)}|$, $u_0$ and $v_0$.
 \end{thm}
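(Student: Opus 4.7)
The plan is to follow the three-part decomposition used in the proof of Theorem \ref{thm:main_burgers} for the 1D Burgers equation, substituting the two-dimensional ingredients already prepared. Introduce the finite-dimensional operator $\cG^u_{m,p}$ obtained by applying the spectral-truncation construction of \eqref{gmpdef} in the output variables to the rational operator $\cG^u_m$ of \eqref{rationaloprt2d}, and analogously $\cG^v_{m,p}$; the triangle inequality gives
\begin{equation*}
\norm{\cG^u-\cG^u_{\bN}}_{L^\infty}\leq \norm{\cG^u-\cG^u_m}_{L^\infty}+\norm{\cG^u_m-\cG^u_{m,p}}_{L^\infty}+\norm{\cG^u_{m,p}-\cG^u_{\bN}}_{L^\infty},
\end{equation*}
and the same for $v$, so it suffices to bound each of these six pieces.

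For the first pair, Lemma \ref{cor:2dburgersbrancherro-apprx} yields $\mathcal{O}(h)$, and since $m$ indexes a $\sqrt m\times\sqrt m$ tensor-product grid, $h\sim m^{-1/2}$, producing the $m^{-1/2}$ contribution. For the middle pair I would apply Theorem \ref{thm:bochner-riesz-fourier-error} to a Bochner–Riesz mean of the Fourier series of $\cG^u_m$ in the two-dimensional output; the truncation error is bounded by $\omega_2(\cG^u_m;p^{-1/2})_\infty$, and because $\cG^u_m$ and $\cG^v_m$ are analytic in their output variables (the heat kernel is $C^\infty$ and the denominator $\int\mathcal{K}\,\mathcal{I}_m^0\phi_0\,d\mathbf{s}$ is bounded below uniformly on $\mathcal{S}(M_0,M_1)$), this modulus is at worst $\mathcal{O}(p^{-1/2})$.

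For the third pair, at each quadrature node entering the Fourier coefficients of $\cG^u_{m,p}$, Lemma \ref{cor:2dburgersbrancherror} supplies a ReLU branch network of size $\mathcal{O}(m^2\ln m)$ matching $\cG^u_m$ (and $\cG^v_m$) uniformly in the discretized inputs up to $\mathcal{O}(h)=\mathcal{O}(m^{-1/2})$. Reading the calibration $|\Theta_u^{(i)}|\sim m^2\ln m$ and $h\sim m^{-1/2}$ back in terms of network size gives the declared rate $|\Theta_u^{(i)}|^{-1/2+\epsilon}$, with the arbitrary $\epsilon>0$ absorbing the logarithm, and likewise for $v$. The trunk networks $f^{\cN}(\cdot;\theta^{(k)})$ approximate the analytic Fourier basis functions and, by Theorem \ref{thm:conveg-rate-analytic-ReLU}, can be taken of constant width and depth with error exponentially small in the network size, making this contribution strictly subdominant.

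The main obstacle is the accumulation of branch and trunk errors across the $p$ summands $g^{\cN}_u\,f^{\cN}$: a naive bound multiplies the per-coefficient branch error by $p$, so one must verify that $\sum_{k=1}^p|c_k|$ is bounded independently of $p$ on $\mathcal{S}$, which follows from the rapid spectral decay of $\cG^u_m$ and $\cG^v_m$ guaranteed by their analyticity in $(\mathbf{x},t)$. The remaining bookkeeping — checking that all constants are uniform over $\mathcal{S}$ via the lower bound on $\phi_0$ built into $\mathcal{S}(M_0,M_1)$, and that the same fixed trunk can be reused across the two output components $u$ and $v$ — is routine.
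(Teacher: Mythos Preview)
Your decomposition and the first two steps are sound, but the approach diverges from the paper's (implicit) argument and creates a genuine mismatch with the statement you are asked to prove.

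The paper does not write out a separate proof for this 2D theorem; it is meant to follow the template of the proof of Theorem~\ref{thm:main_burgers} (Section~5.5) with the 2D Lemmas~\ref{cor:2dburgersbrancherro-apprx} and~\ref{cor:2dburgersbrancherror} in place of their 1D analogues. In that template the finite-dimensional approximation is \emph{piecewise linear nodal interpolation} in the output variable, $\cG_{m,p}=\mathcal{I}_{p}^1\big(\cG_m\big)$, and the trunk networks are ReLU realizations of the 2D hat functions $L_k$. As the paper notes in the proof of Theorem~\ref{thm:main_burgers}, these are represented \emph{exactly} by ReLU networks of width $\mathcal{O}(k_h)$ and depth $\mathcal{O}(\log_2 k_h)$, with $k_h$ the maximal number of neighbouring elements; hence the trunk size is genuinely $\mathcal{O}(1)$ and the trunk error is zero. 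The branch error does not accumulate over the $p$ summands because piecewise linear interpolation obeys a maximum principle: $\|\mathcal{I}_p^1\cG_m-\mathcal{I}_p^1 g^{\cN}\|_{L^\infty}\le\max_i|\cG_m(\mathbf{x}_i)-g^{\cN}(\cdot;\Theta^{(i)})|$.

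Your Bochner--Riesz/Fourier route fails to deliver the stated $\mathcal{O}(1)$ trunk size. Approximating a Fourier exponential by a ReLU network via Theorem~\ref{thm:conveg-rate-analytic-ReLU} to accuracy $\varepsilon$ requires size $\sim(\log(1/\varepsilon))^{d+1}$; to make the aggregate trunk contribution $\le p^{-1/2}$ you would need $|\theta^{(k)}|\gtrsim(\log p)^{3}$, which grows with $p$. ``Constant width and depth with error exponentially small in the network size'' is not the same as constant size with vanishing error. Secondly, your cure for the $p$-fold accumulation of branch error---bounding $\sum_k|c_k(\cG^u_m)|$ via analyticity in $(\mathbf{x},t)$---is delicate near $t\to 0^+$, where the heat-kernel regularization degenerates; uniform summability over $\mathcal{S}$ and over all $t>0$ is not obvious and would need a separate argument. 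The paper's interpolation/max-principle route avoids both issues entirely.
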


\subsection{1D steady advection-diffusion equation with Dirichlet boundary condition}
\label{ssec:1dadv}
 First, we transform \eqref{1dadvection} into the divergence form:
\begin{eqnarray}\label{1dadvectiondivform}
-\partial_x (A(x) \partial_x u)=A(x)f(x), \quad \hbox{where} \quad A(x):=\exp\Big(-\int_0^x a(s)ds \Big),
\end{eqnarray}
and $0< \exp(-L\| a\|_{\infty}) \leq A(x), A^{-1}(x) \leq  \exp(L \| a\|_{\infty})$. Notice that the analytic solution to \eqref{1dadvectiondivform} can be found by the following steps: a) integrate  both sides, which generates a constant $C$ to be determined, b) divide both sides by $-A(x)$ and integrate again, c) determine $C$ by using the boundary condition. If we define the two operators:
\begin{eqnarray*}
	\mathcal{A}_+ (g)(x):= \int_0^x A(y)g(y)dy, \quad \mathcal{A}_- (g)(x):=\int_0^x A^{-1}(y)g(y)dy,
\end{eqnarray*}
then for any given $f(x)$, the analytic solution $u(x)$ can be given by the following expression symbolically:
\begin{eqnarray}\label{1dellipticexcslt}
u(x)= \mathcal{G}^f(a)(x):=-  \Big( \mathcal{A}_- \circ  \mathcal{A}_+ \Big)(f)(x) + \frac{\mathcal{A}_- (\mathbbm{1})(x)}{\mathcal{A}_- (\mathbbm{1})(L)} \Big( \mathcal{A}_- \circ  \mathcal{A}_+ \Big)(f)(L),
\end{eqnarray}
where $\mathbbm{1}(x)=1$ is the constant function. Clearly, $\mathcal{A}_{\pm} (\mathbbm{1})$ is linear w.r.t. $A^{\pm 1}(x)$ and $0\leq \frac{\mathcal{A}_-^N (\mathbbm{1})(x)}{\mathcal{A}_-^N (\mathbbm{1})(L)} \leq 1$ is increasing with respect to $x$. Define
\begin{eqnarray}\label{ellipticrationaldef}
\cG_m^f(\mathbf{a}_m;x):=-\Big( \mathcal{A}_-^N \circ  \mathcal{A}_+^N \Big)(f)(x) + \frac{\mathcal{A}_-^N (\mathbbm{1})(x)}{\mathcal{A}_-^N (\mathbbm{1})(L)} \Big( \mathcal{A}_-^N \circ  \mathcal{A}_+^N \Big)(f)(L),
\end{eqnarray}
where $\mathbf{a}_m=(a_1,\cdots, a_m)^\top$ and 
\begin{eqnarray*}
	\mathcal{A}_+^N (g)(x):= \int_0^x \mathcal{I}_m^0 (Ag) (y)dy, \quad
	%= \sum_{i=1}^{j-1} A(x_{i-1})g(x_{i-1})(x_{i}-x_{i-1}), \\
	\mathcal{A}_-^N (g)(x):=\int_0^x  \mathcal{I}_m^0 (A^{-1}g)(y)dy,
	% = \sum_{i=1}^{j-1} A^{-1}(x_{i-1})g(x_{i-1})(x_{i}-x_{i-1}),
\end{eqnarray*}
and $\mathcal{I}_m^0 f$ represents the piecewise constant interpolation of $f$ on each subinterval $[x_{i-1}, x_i)$. Notice that $\Big( \mathcal{A}_-^N \circ  \mathcal{A}_+^N \Big)(f)$ is bilinear w.r.t. $\frac{A(x)}{A(y)}$ and $f(x)$. Then we can prove the following Lemmas.
\begin{lem}\label{prop:1dadvecrationalerr}
	Let $h>0$ be small enough. There is a constant $C = C(M_0)>0$  such that 
	\begin{eqnarray*}
		\|\mathcal{G}^f(a)(\cdot) - \cG_m^f(\mathbf{a}_m;\cdot) \|_{L^{\infty}(0,L)} \leq Ch.
	\end{eqnarray*}
\end{lem}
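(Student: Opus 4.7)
My plan is to prove the bound by propagating the $O(h)$ piecewise constant interpolation error through each occurrence of $\mathcal{A}_\pm^N$ in the formula \eqref{ellipticrationaldef}, keeping track of how the uniform upper and lower bounds on the integrating factor $A(x)=\exp(-\int_0^x a(s)\,ds)$ depend on $M_0$. The key preliminary observation is that $\norm{a}_{L^\infty}\leq M_0$ implies $e^{-LM_0}\leq A(x), A^{-1}(x)\leq e^{LM_0}$, and since $A'=-aA$, both $A$ and $A^{-1}$ are Lipschitz with constants bounded by $M_0 e^{LM_0}$. Consequently, whenever $g$ is Lipschitz, both $Af$ (for the fixed $f$ at hand) and $A^{-1}g$ are Lipschitz, so the standard piecewise constant estimate gives $\norm{Af-\mathcal{I}_m^0(Af)}_{L^\infty}+\norm{A^{-1}g-\mathcal{I}_m^0(A^{-1}g)}_{L^\infty}\leq C(M_0)(1+\mathrm{Lip}(g))h$.

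\textbf{Errors for the inner and outer operators.} First I would show $\norm{\mathcal{A}_+ f - \mathcal{A}_+^N f}_{L^\infty(0,L)} \leq C h$ by pulling the difference inside the integral and applying the interpolation estimate to $Af$. Next I would apply the same reasoning with $g=\mathbbm{1}$ and with $g=\mathcal{A}_+^N f$ to control $\mathcal{A}_-$ versus $\mathcal{A}_-^N$. Here I would note that $\mathcal{A}_+^N f$ is Lipschitz because its derivative (in the a.e.\ sense) equals $\mathcal{I}_m^0(Af)$, which is uniformly bounded by $\norm{A}_\infty\norm{f}_\infty$.

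\textbf{Combining through the composition and the ratio.} For the composed operator I would telescope
\[
\mathcal{A}_-\circ\mathcal{A}_+(f) - \mathcal{A}_-^N\circ\mathcal{A}_+^N(f) = \mathcal{A}_-\bigl(\mathcal{A}_+ f - \mathcal{A}_+^N f\bigr) + \bigl(\mathcal{A}_-  -\mathcal{A}_-^N\bigr)\mathcal{A}_+^N f,
\]
so that the first summand is controlled by $\norm{A^{-1}}_\infty L \cdot \norm{\mathcal{A}_+ f-\mathcal{A}_+^N f}_\infty = O(h)$, and the second is $O(h)$ by the previous paragraph applied to the Lipschitz function $\mathcal{A}_+^N f$. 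To handle the rational correction term I would use the identity
\[
\frac{\alpha_1}{\beta_1}-\frac{\alpha_2}{\beta_2} = \frac{\alpha_1-\alpha_2}{\beta_2} + \frac{\alpha_1(\beta_2-\beta_1)}{\beta_1\beta_2},
\]
with $(\alpha_1,\beta_1)=(\mathcal{A}_-(\mathbbm{1})(x),\mathcal{A}_-(\mathbbm{1})(L))$ and analogously for the $\mathcal{A}_-^N$ versions, together with the composition estimate at $x=L$ multiplied in.

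\textbf{Main obstacle.} The one delicate point is ensuring the denominators stay uniformly bounded below. The exact denominator satisfies $\mathcal{A}_-(\mathbbm{1})(L)\geq L e^{-LM_0}>0$, but one must verify the same for $\mathcal{A}_-^N(\mathbbm{1})(L)$. Since $\norm{\mathcal{A}_-(\mathbbm{1})-\mathcal{A}_-^N(\mathbbm{1})}_{L^\infty}\leq C(M_0)h$, for $h$ small enough the approximate denominator exceeds $\tfrac12 L e^{-LM_0}$, which is precisely why the hypothesis ``$h$ small enough'' appears. Assembling all the $O(h)$ bounds (inner operator, outer operator, composition, and ratio correction) with the uniform pointwise bounds on $\mathcal{A}_-(\mathbbm{1})(x)$ and $(\mathcal{A}_-\circ\mathcal{A}_+)(f)(L)$ then yields $\norm{\mathcal{G}^f(a)-\cG_m^f(\mathbf{a}_m;\cdot)}_{L^\infty(0,L)}\leq C(M_0)h$.
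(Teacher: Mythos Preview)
Your proposal is correct and follows essentially the same route as the paper's proof: both argue that the piecewise constant interpolation error of $A^{\pm 1}g$ is $O(h)$ (using $A' = -aA$ and the Lipschitz/derivative bound on $g$), propagate this through the composition $\mathcal{A}_-\circ\mathcal{A}_+$, control the ratio term, and assemble via the triangle inequality. The paper's write-up is terser---it derives a \emph{relative} interpolation bound $|A^{\pm1}g-\mathcal{I}^0(A^{\pm1}g)|\le C h\,|\mathcal{I}^0(A^{\pm1}g)|$ for piecewise constant $g$, which lets it pass directly to the ratio estimate---whereas you use absolute Lipschitz bounds plus the algebraic identity for $\alpha_1/\beta_1-\alpha_2/\beta_2$ and an explicit telescoping of the composition; these are equivalent bookkeeping choices, and your version is arguably cleaner about the lower bound on the denominator.
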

%We derive the error estimate for the branch network in the next theorem. 
\begin{thm}\label{thm:1dadvecnetworkerr}
Let $a\in \Sigma$ be piecewise constant. For any given $f$, let $\cG^f(a)$ be the solution operator to the advection-diffusion equation \eqref{eqtadv}. Then there exist  ReLU networks $g^{\cN}_f(\mathbf{a}_m;\Theta)$ of size $|\Theta| = m^4 \ln(m)$ and a uniform constant $C=C(M_0)$, such that for any $x \in [0,L]$, there exists a set of parameters $\Theta_{x}$, s.t.
\begin{eqnarray*}
\abs{\cG^f(a)(x) - g^{\cN}_f(\mathbf{a}_m;\Theta_x)} \leq   \abs{ \mathcal{G}^f(a)(\cdot) - \cG_m^f(\mathbf{a}_m;x)} + \abs{ \cG_m^f(\mathbf{a}_m;x) -   g^{\cN}_f(\mathbf{a}_m;\Theta_x)}\leq Ch.
\end{eqnarray*}
\end{thm}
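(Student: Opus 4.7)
The proof follows the same two-step pattern as Theorem \ref{thm:burgersoprtest1}. First, split
\[
\abs{\cG^f(a)(x) - g^{\cN}_f(\mathbf{a}_m;\Theta_x)} \leq \abs{\cG^f(a)(x) - \cG_m^f(\mathbf{a}_m;x)} + \abs{\cG_m^f(\mathbf{a}_m;x) - g^{\cN}_f(\mathbf{a}_m;\Theta_x)}
\]
by the triangle inequality. The first summand is $\mathcal{O}(h)$ by Lemma \ref{prop:1dadvecrationalerr}, so the task reduces to approximating $\cG_m^f$ by a ReLU network of the claimed size.

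The central structural observation is that, since $a$ is piecewise constant, the coefficients $A(x_j)=\exp(-\sum_{k\leq j}a_k h_k)$ are products of the univariate quantities $c_k:=\exp(a_k h_k)\in[e^{-M_0 h},e^{M_0 h}]$, so $\cG_m^f$ becomes a rational function of $\mathbf{c}=(c_1,\dots,c_m)$. I would unfold \eqref{ellipticrationaldef}: using the identity $A(x_{j-1})A^{-1}(x_{k-1})=\prod_{i=j}^{k-1}c_i$ for $j\leq k$, one expresses $(\mathcal{A}_-^N\circ\mathcal{A}_+^N)(f)(x)$ as a double sum $\sum_k\sum_{j\leq k}\alpha_{j,k}\prod_{i=j}^{k-1}c_i$ of $\mathcal{O}(m^2)$ consecutive-product monomials of degree $\mathcal{O}(m)$, and each of $\mathcal{A}_-^N(\mathbbm{1})(x),\mathcal{A}_-^N(\mathbbm{1})(L)$ as a sum $\sum_p h_p\prod_{i=1}^{p-1}c_i$ of $\mathcal{O}(m)$ prefix-product monomials. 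Writing $\cG_m^f=-T_1+T_2T_3/T_4$ and clearing the common denominator yields a rational function whose numerator $-T_1T_4+T_2T_3$ has $s=\mathcal{O}(m^3)$ monomials and degree $r=\mathcal{O}(m)$; the denominator $T_4=\mathcal{A}_-^N(\mathbbm{1})(L)$ is bounded below by $L\,e^{-LM_0}>0$, independently of $m$.

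After an affine rescaling of the $c_i$'s into $[0,1]$ and normalizing numerator and denominator so they map into $[-1,1]$ and $[2^{-k},1]$ respectively (with $k=\mathcal{O}(1)$ thanks to the uniform lower bound on $T_4$), Theorem \ref{thm:rational-apprx-ReLU} applies with $d=m$, $s=\mathcal{O}(m^3)$, $r=\mathcal{O}(m)$, $k=\mathcal{O}(1)$, and target accuracy $\varepsilon\sim h\sim m^{-1}$. The size bound $\mathcal{O}(srk\ln(sr/\varepsilon))$ then gives a ReLU network of size $\mathcal{O}(m^4\ln m)$ approximating the rescaled rational map within $\mathcal{O}(h)$; the dependence of $\cG_m^f$ on $x\in[x_{l-1},x_l)$ enters only affinely through the tail $(x-x_{l-1})$ of the last incomplete subinterval and is absorbed into the $x$-dependent parameter set $\Theta_x$. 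Since the target network takes $\mathbf{a}_m$ rather than $\mathbf{c}_m$ as input, I would prepend $m$ parallel ReLU sub-networks approximating the analytic univariate map $a\mapsto e^{ah_i}$ on $[-M_0,M_0]$; by standard ReLU approximation results for smooth univariate functions, each such sub-network has size $\mathcal{O}(\mathrm{polylog}\,m)$ for the required accuracy, so the preprocessing layer costs $\mathcal{O}(m\,\mathrm{polylog}\,m)$ and is subdominant to $\mathcal{O}(m^4\ln m)$.

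The main obstacle I anticipate is the monomial bookkeeping: verifying that the numerator $-T_1T_4+T_2T_3$ has only $\mathcal{O}(m^3)$ distinct monomials (rather than a naive $\mathcal{O}(m^4)$), since this exponent is exactly what yields the target size $m^4\ln m$ when fed into Theorem \ref{thm:rational-apprx-ReLU}. A secondary technical point is calibrating the accuracy of the exponential pre-processing sub-networks against a uniform Lipschitz bound of $\cG_m^f$ in $\mathbf{c}$ so that the composed error still fits within the $\mathcal{O}(h)$ budget; the explicit lower bound on $T_4$ together with the uniform pointwise bounds on the $c_i$'s furnishes such a Lipschitz estimate.
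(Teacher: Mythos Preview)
Your proposal is correct and follows essentially the same route as the paper: identify $\cG_m^f$ as a rational function in the variables $v_i=\exp(a_ih_i)$, count that the numerator $-T_1T_4+T_2T_3$ has $\mathcal{O}(m^3)$ monomials of degree $\mathcal{O}(m)$ over a denominator bounded below uniformly, and invoke Theorem~\ref{thm:rational-apprx-ReLU} with $\varepsilon\sim h$. Your ``main obstacle'' is not one: since $T_1,T_3$ each have $\mathcal{O}(m^2)$ monomials and $T_2,T_4$ each have $\mathcal{O}(m)$, the products $T_1T_4$ and $T_2T_3$ each have at most $\mathcal{O}(m^3)$ monomials, exactly as the paper records.

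The one place you diverge from the paper is the preprocessing $\mathbf{a}_m\mapsto\mathbf{c}_m$. You propose ReLU sub-networks approximating $a\mapsto e^{ah_i}$; the paper instead follows the pattern of Theorem~\ref{thm:burgersoprtest1} and replaces each $v_i=\exp(a_ih_i)$ by its \emph{linear} truncation $l_i^v=1+a_ih_i$, then bounds the propagated error through the rational function by $\mathcal{O}(h)$ via Lemma~\ref{lem:rtnfuncerr} (each factor incurs $\mathcal{O}(h^2)$, and the $m$-fold product accumulates to $\mathcal{O}(h)$). Both work, but the paper's linearization is simpler: it avoids the extra Lipschitz calibration you flag as a ``secondary technical point'' and keeps the composed network purely a ReLU rational-approximation block in variables that are already affine in $\mathbf{a}_m$.
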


Finally, Theorem \ref{thm:main_1dadvec} follows from Theorem \ref{thm:1dadvecnetworkerr} and similar arguments in the proof of Theorem \ref{thm:main_burgers}.

\begin{rem}
The analysis in this subsection also works for the variable coefficient second order differential equation $-\partial_x (a(x) \partial_x u)= f(x)$, where $f(x) \in L^{\infty}([0,L])$ and $0<m \leq a(x) \leq M$. 
\end{rem}

% -----------------------------------------------------------------------------------------------------------------
 
\subsection{2D steady reaction-diffusion equations}\label{sec:2d-reaction-diffusion}
Consider the 2D steady reaction-diffusion equation with a classical boundary condition:
\begin{eqnarray}\label{2dreactionpde}
\left\{\begin{array}{ll}
-\Delta u(x,y) + a(x,y) u(x,y)= f(x,y), \quad \hbox{in} \ \Omega,\\
\mathcal{B}u(x,y) = 0, \quad \hbox{on} \ \partial \Omega,
\end{array}\right.
\end{eqnarray}
where $\Omega$ is a rectangular domain and $\mathcal{B}$ can be the Dirichlet, Neumann or Robin boundary operator. Define
\begin{eqnarray*}  
\mathcal{S} = \mathcal{S}(C_0) = \{a(x,y) \in L^{\infty}(\Omega): 0 \leq a(x,y) \leq C_0\}, \quad C_0>0.
\end{eqnarray*}
%where $C_0>0$ and $\lambda_1$ is the smallest non-zero eigenvalue of $-\Delta$ with respect to $\Omega$. 
Suppose $f(x,y) \in L^{\infty}$ and $a(x,y)\in \mathcal{S}$. Similar to the case of Equation \eqref{1dadvection}, we only consider $a(x,y)$ as the input to the solution operator and  denote $u(x,y) = \cG^f(a(x,y))$.
Instead of using analytical solutions, we use  numerical solutions such as those from central finite  difference schemes for \eqref{2dreactionpde}. Assume that the linear system associated with the central finite difference scheme  is of the form 
\begin{eqnarray}\label{ls2dreactionpde0}
\Big( S+\Lambda \Big) U_N = F,
\end{eqnarray}
where $S$ is the stiffness matrix, $\Lambda$ is the diagonal matrix depending on the variable coefficient $a(x,y)$, $F$ is the discretized right hand side, and $U_N$ is the vector of numerical solution. 
%For example, if the finite difference method or a collocation method is applied, then $\Lambda$ is a diagonal matrix whose diagonal entries are values of $a(x,y)$ at each of the nodes $\{ (x_{j_1}, y_{j_2})\}_{j_1,j_2=1}^{\sqrt{m}}$.  Especially, if we consider $U_N$ to be the finite difference solution, then 
We may  write 
$\Lambda = \sum_{i=1}^m \Lambda_i$, where $\Lambda_i=h^2 a_i I_i$, $I_i = e_i e_i^\top$, and $e_i \in \mathbb{R}^m$ is the $i$-th unit vector, %$a_i=\Lambda_{ii}$, 
$a_i$ is the $i$-th diagonal entry of $\Lambda$,
$h \sim m^{-\frac{1}{2}}$ is the mesh size of the numerical scheme. 
Then \eqref{ls2dreactionpde0} can be written as 
\begin{eqnarray}\label{ls2dreactionpde1}
\Big( S+ \sum_{i=1}^m h^2 a_i I_i  \Big) U_N = F.
\end{eqnarray}
Let $\mathbf{a}_m = [a_1,\cdots,a_m]^\top$. Then the numerical solution operator can be defined as
\begin{eqnarray*}
U_N = \cG_N^F(\mathbf{a}_m):= \Big( S+ \sum_{i=1}^m h^2 a_i I_i  \Big)^{-1} F.
\end{eqnarray*}
Define the exact solution of $u$ at grid points associated with the finite difference schemes by 
\begin{eqnarray}\label{defU}
U=[u(x_{1}, y_{1}), \cdots, u(x_{1}, y_{\sqrt{m}}), u(x_{2}, y_{1}),\cdots, u(x_{\sqrt{m}}, y_{\sqrt{m}})]^\top \in \mathbb{R}^m.
\end{eqnarray}
We assume that for the central difference scheme used above for Equation \eqref{2dreactionpde},  there exist constants $C>0$ and $0<r \leq 2$, such that
\begin{eqnarray}
\| U - U_N \|_{l^{\infty}} \leq Ch^r \sim m^{-\frac{r}{2}}.
\end{eqnarray} 
See e.g., \cite{JovSuli-B14}  for more details of the specific value of $r$.
% otherwise, we should use discrete $H^1$ norm instead of the $L^{\infty}$ norm in the analysis. The proof can be done by setting $U_N - U^{\cN} \leq m^{-2}$.

In the following text, we \textit{apply  Sherman-Morrison's formula to represent $U_N$ and then construct a blessed representation \cite{mhaskar2016deep} for  branch networks}.  Recall the Sherman-Morrison formula 
\begin{eqnarray}\label{shermanmorrison}
(A+uv^\top)^{-1} = A^{-1} - \frac{A^{-1} uv^\top A^{-1}}{1+v^\top A^{-1} u},
\end{eqnarray}
where $A\in \mathbb{R}^{m\times m}$ is invertible, $u,v \in \mathbb{R}^m$ such that $1+v^\top A^{-1} u\neq 0$. Since each matrix $h^2 a_i I_i$ has rank one, \eqref{shermanmorrison} can be applied to find the recurrence formula of $(S+\Lambda)^{-1}$ explicitly. Define $T_0 := S^{-1}$, and $T_k := \Big( S+\sum_{i=1}^k h^2 a_i I_i\Big)^{-1}$ for $k=1,\cdots, m$. Thus, $T_m = (S+\Lambda)^{-1}$. Define the map  $G_k: \mathbb{R}\times \mathbb{R}^{m\times m} \rightarrow \mathbb{R}^{m\times m}$ by 
\begin{eqnarray*}
G_k(\alpha, M) := M - \frac{\alpha}{1+\alpha (e_k^\top M e_k)} M e_k e_k^\top M, \quad k=1,2,\cdots,m,\quad 1+\alpha (e_k^\top M e_k)\neq 0.
\end{eqnarray*}
By  the Sherman-Morrison formula, we have 
\begin{eqnarray}\label{SMdiscretize}
\quad \quad T_k =  \Big( T_{k-1}^{-1} + h^2 a_k I_k \Big)^{-1} = T_{k-1} - \frac{h^2 a_k}{1+h^2 a_k (e_k^\top T_{k-1} e_k)} T_{k-1} e_k e_k^\top T_{k-1} = G_k(h^2 a_k, T_{k-1}).
\end{eqnarray}
Then $(S+\Lambda)^{-1}=T_m $ can be written as  
\begin{eqnarray}\label{operatorstructure}
T_m = G_m(h^2 a_m, G_{m-1}(h^2 a_{m-1}, G_{m-1}(\cdots G_1(h^2 a_1, T_0)\cdots))).
\end{eqnarray}
%Notice that if one expands $T_m$ directly to obtain a rational function in terms of $a_1, \cdots, a_m$, the degree of the numerator will be $\mathcal{O}(2^m)$ (a square of $T_{k-1}$ multiplied by another $a_k$), which means the curse of dimensionality appears.  

According  to \cite{mhaskar2016deep}, we may utilize the compositional structure of this discrete  operator to overcome the curse of dimensionality (in $m$). 
The  neural network can be written as  $\cN_G:=\cN_{G_m} \circ \cN_{G_{m-1}} \circ \cdots \circ \cN_{G_1}$, where $\cN_{G_k}$ is a neural network approximating $G_k$, $k=1,2,\cdots,m$.  The structure of $\cN_G$ is presented in Figure \ref{ngstructure}.
  \begin{figure}[htbp] %  figure placement: here, top, bottom, or page
    \centering
    \includegraphics[width=4in]{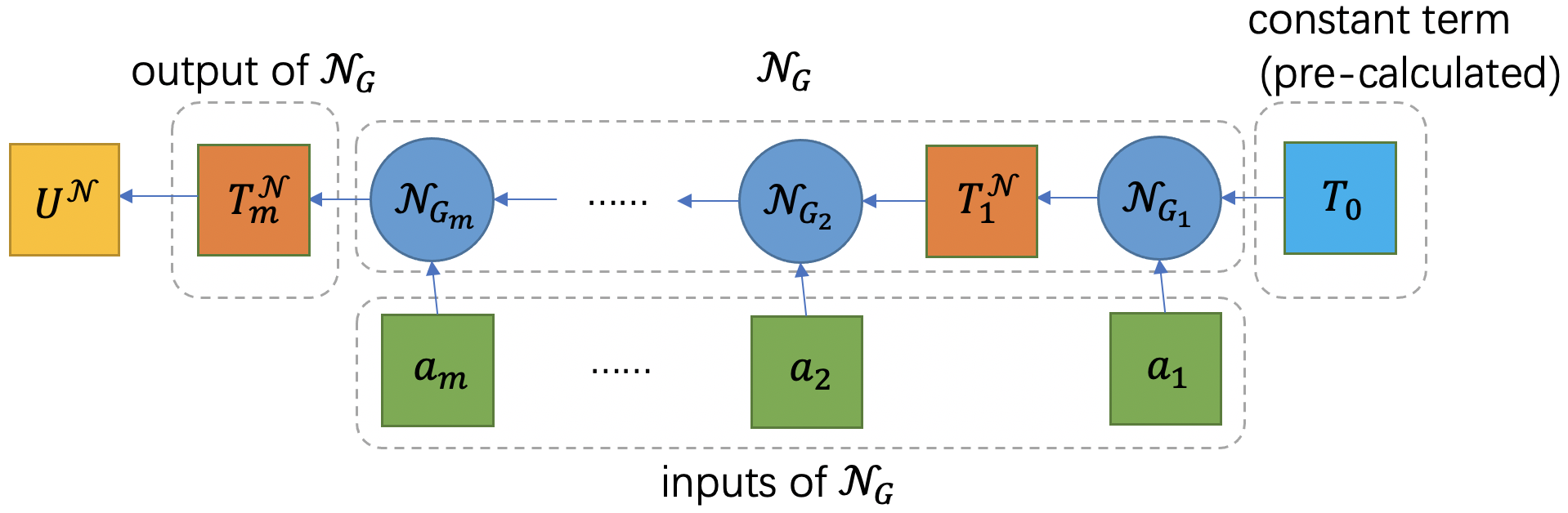} 
   \caption{The structure of $\cN_{G}$ is presented in this figure, where $\mathbf{a}_m$ serves as the input, $T_0 = S^{-1}$ is independent of $a(x,y)$ and can be pre-calculated, and the matrix $T_m^{\cN}$ is the output. Then, $U^{\cN}$ is evaluated by \eqref{defbranch2delliptic}.  }
   \label{ngstructure}
 \end{figure}
The network $\cN_G$ follows the same  compositional structure  in  \eqref{operatorstructure}. Let $T_1^{\cN} = \cN_{G_1}(h^2 a_1, T_0)$ and $T_k^{\cN} = \cN_{G_k}(h^2 a_k, T_{k-1}^{\cN})$, $k=2,\cdots,m$. The next question is how to construct each of $\mathcal{N}_{G_k}$. Even though every $G_k$ has $m^2+1$ inputs and $m^2$ outputs, it is extremely sparse as the following analysis shows that every output depends on 5 inputs only. Let $(T_k)_{ij}$ be the entry in the $i$-th row and $j$-th column of $T_k$. Then
\begin{eqnarray}\label{SMdiscretizeelementwise}
(T_k)_{ij} &=& (T_{k-1})_{ij} - \frac{h^2 a_k}{1+h^2 a_k (T_{k-1})_{kk}} (T_{k-1})_{ik} (T_{k-1})_{kj} \\
&=& \mathcal{R} \Big(h^2 a_k, (T_{k-1})_{ij}, (T_{k-1})_{kk}, (T_{k-1})_{ik}, (T_{k-1})_{kj}\Big),\nonumber
\end{eqnarray}
where the  rational function $\mathcal{R}$ is defined by:
\begin{eqnarray}\label{defrationalelliptic}
y = \mathcal{R}(x_1,x_2,x_3,x_4,x_5):= x_2 - \frac{x_1 x_4 x_5}{1+x_1 x_3}=\frac{x_2 + x_1x_2 x_3 - x_1 x_4 x_5}{1+x_1 x_3}. 
\end{eqnarray}
In the  neural network $\cN_{G_k}$, we can identify  the variables of each functional that calculates $(T_k)_{ij}$, $i,j=1,\cdots, m$ and  approximating the rational function $\mathcal{R}$. By  the facts that $\abs{x_1 x_3} \ll 1$ and that $a(x,y)$ is bounded
and Theorem \ref{thm:rational-apprx-ReLU}, there is a ReLU network $\mathcal{N}_{\mathcal{R}}(x_1,x_2,x_3,x_4,x_5;\theta)$ of size $|\theta| = \mathcal{O}(\ln(\varepsilon^{-1}))$, s.t. 
\begin{eqnarray}\label{networkerror}
\abs{ \mathcal{R}(x_1,x_2,x_3,x_4,x_5)-\mathcal{N}_{\mathcal{R}}(x_1,x_2,x_3,x_4,x_5;\theta)} \leq \varepsilon.
\end{eqnarray}
 
 Let us estimate the number of parameters in the network $\cN_{G}$. 
Suppose that $\cN_{\mathcal{R}}$ has $L_{\mathcal{R}}$ layers and $N_{\mathcal{R}}$ neurons in each layer and $N_{\mathcal{R}}L_{\mathcal{R}} =\mathcal{O}(\ln(\varepsilon^{-1}))$ by Theorem \ref{thm:rational-apprx-ReLU}. To obtain each $(T_k^{\cN})_{ij}$, it  requires $m^2 N_{\mathcal{R}}$ neurons in the first layer of $\cN_{G_k}$ as shown in Figure \ref{ngkstructure}. Then, the   network $\cN_{G}$ has $m L_{\mathcal{R}}$ layers as each $\cN_{G_k}$ has $L_{\mathcal{R}}$ layers.  
  \begin{figure}[htbp] %  figure placement: here, top, bottom, or page
    \centering
    \includegraphics[width=5.5in]{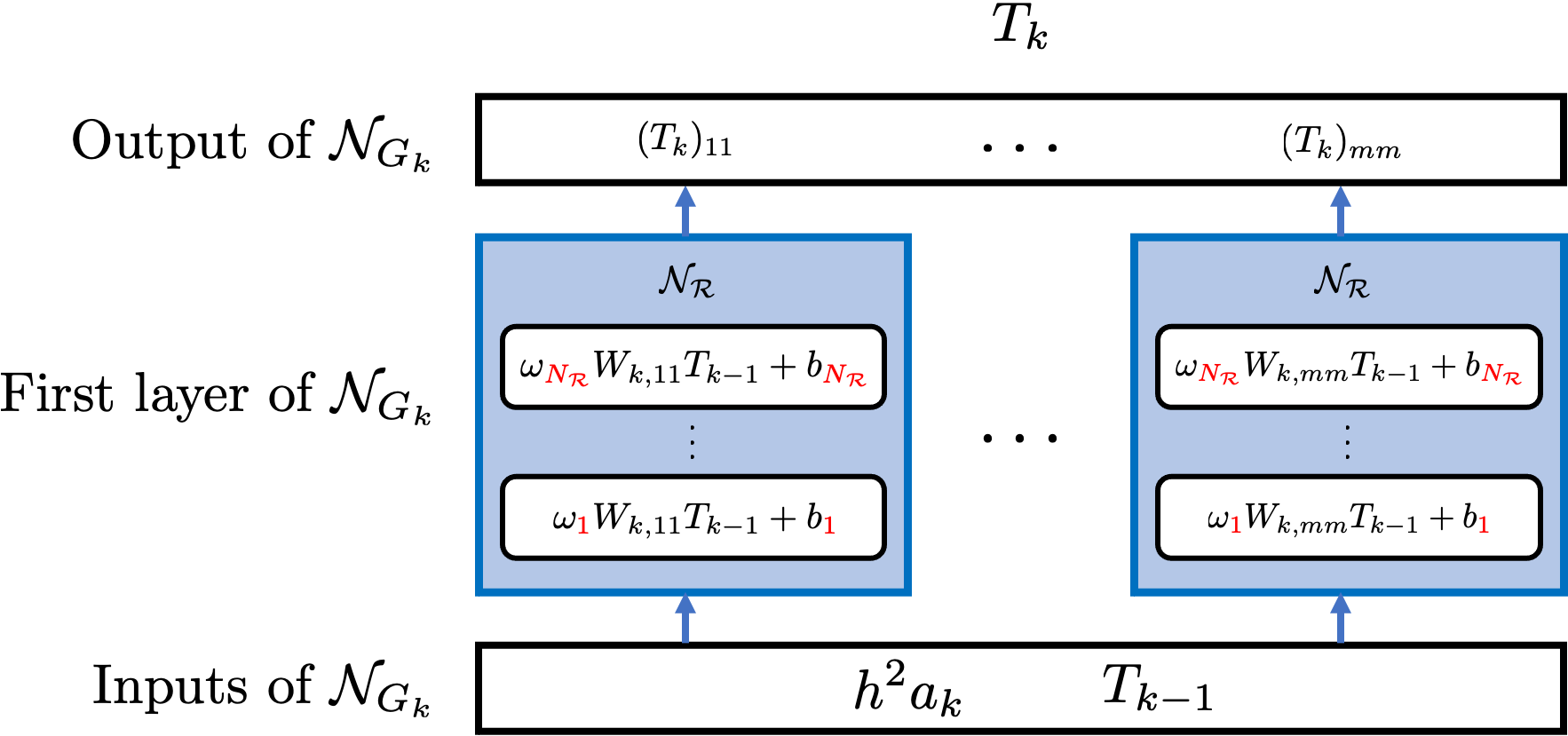}
    \caption{The structure of   $\cN_{G_k}$, $k=1,\cdots,m$,  where $\{\omega_i \}_{i=1}^{N_\mathcal{R}}$ and $\{b_j \}_{j=1}^{N_\mathcal{R}}$ are weights and biases of the first layer of $\cN_{\mathcal{R}}$, respectively, and each $W_{k,ij} \in \mathbb{R}^{5\times m^2}$ is a sparse matrix which has 5 nonzero elements.
    }
    \label{ngkstructure}
 \end{figure} 
 Denote 
\begin{eqnarray}\label{defbranch2delliptic}
  g^{\cN}(\mathbf{a}_m) := T_m^{\cN} F=U^{\cN}.
\end{eqnarray} 

 Then we have the following theorem on the error estimate of the branch network. 

\begin{thm}[Convergence rate of branch networks using  blessed representations]\label{thm:2dreactionbranch}
Let $a(x,y)\in \mathcal{S}$. Let $\cG^f(a)$, $\cG_N^F(\mathbf{a}_m)$ be the analytic solution operator, numerical solution operator to \eqref{2dreactionpde}, respectively.
Let $\{ (x_{j_1}, y_{j_2})\}_{j_1,j_2=1}^{\sqrt{m}}$ be the set of nodes associated with the numerical scheme and $U$ be given by \eqref{defU}. Then there exist a generic constant $C$ and a blessed network $g^{\cN}(\mathbf{a}_m)$ \eqref{defbranch2delliptic} with width $N_{g^{\cN}} = \mathcal{O}(m^2 \ln(m))$ and  depth $L_{g^{\cN}} = \mathcal{O}(m\ln(m))$, such that 
\begin{eqnarray*}
\| U - g^{\cN}(\mathbf{a}_m)\|_{l^{\infty}} \leq C \Big( m^{-\frac{r}{2}}+ m^{-1}\Big),
\end{eqnarray*}
where $r$ is the convergence order of the finite difference scheme used to construct the network.
\end{thm}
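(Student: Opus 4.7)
My plan is to split the error via the triangle inequality,
\begin{equation*}
\|U - g^{\cN}(\mathbf{a}_m)\|_{l^\infty} \leq \|U - U_N\|_{l^\infty} + \|U_N - g^{\cN}(\mathbf{a}_m)\|_{l^\infty},
\end{equation*}
handling the first term by the assumed $O(m^{-r/2})$ finite-difference consistency. All the work is in the second term, where I need to track how the per-entry ReLU approximation error \eqref{networkerror} propagates through the $m$-fold Sherman--Morrison iteration \eqref{SMdiscretize}.

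For the iteration error, I set $E_k := \max_{i,j}\bigl|(T_k)_{ij} - (T_k^{\cN})_{ij}\bigr|$ and use the decomposition
\begin{equation*}
(T_k)_{ij} - (T_k^{\cN})_{ij} = \bigl[\mathcal{R}(\cdot;T_{k-1}) - \mathcal{R}(\cdot;T_{k-1}^{\cN})\bigr] + \bigl[\mathcal{R}(\cdot;T_{k-1}^{\cN}) - \mathcal{N}_{\mathcal{R}}(\cdot;T_{k-1}^{\cN})\bigr].
\end{equation*}
The second bracket is bounded by $\varepsilon$ via \eqref{networkerror}. For the first, differentiating \eqref{defrationalelliptic} gives $\partial_{x_2}\mathcal{R} = 1$ and $|\partial_{x_j}\mathcal{R}| = O(h^2)$ for $j \in \{3,4,5\}$, because $x_1 = h^2 a_k = O(h^2)$ and because $x_3,x_4,x_5$ are entries of $T_{k-1}$ bounded uniformly in $k$; this entrywise boundedness follows since each intermediate matrix $S + \sum_{i<k}h^2 a_i I_i$ is SPD (using $a \geq 0$) with uniformly bounded inverse, and the same boundedness is propagated inductively to $T_{k-1}^{\cN}$ provided $\varepsilon$ is small. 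Combining these estimates yields
\begin{equation*}
E_k \leq (1 + C h^2)\,E_{k-1} + \varepsilon, \qquad E_0 = 0,
\end{equation*}
and a discrete Gronwall argument gives $E_m \leq C' m \varepsilon$ since $m h^2 = O(1)$.

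Next I convert $E_m$ into the output error. Writing $U_N - U^{\cN} = (T_m - T_m^{\cN})F$ and using that the discretized right-hand side satisfies $\|F\|_{l^\infty} = O(h^2) = O(1/m)$, a row-sum bound yields $\|U_N - U^{\cN}\|_{l^\infty} \leq m\,E_m\,\|F\|_{l^\infty} = O(E_m) = O(m\varepsilon)$, and choosing $\varepsilon = m^{-2}$ produces the desired $O(m^{-1})$ second-term bound. For the network sizes, Theorem \ref{thm:rational-apprx-ReLU} at this $\varepsilon$ supplies $\mathcal{N}_{\mathcal{R}}$ of total size $O(\mathrm{polylog}(m))$ --- note that the theorem's $k$-parameter is $O(1)$ because the denominator $1 + h^2 a_k (T_{k-1})_{kk}$ is bounded below by $1$. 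Distributing $\mathcal{N}_{\mathcal{R}}$ with both width and depth polylogarithmic in $m$, the $m^2$ parallel instances inside each block $\cN_{G_k}$ and the $m$ serial blocks composing $\cN_{G}$ then deliver total width $O(m^2 \ln m)$ and depth $O(m\ln m)$, as claimed.

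The main obstacle is establishing that the per-step amplification factor is $1 + O(h^2)$ rather than a constant strictly greater than $1$: without this smallness the Gronwall bound would blow up exponentially in $m$ and destroy the result. The smallness hinges on the scaling $x_1 = h^2 a_k$ in the Sherman--Morrison update, together with entrywise boundedness of $T_{k-1}$. Coupled to this, careful bookkeeping is required to maintain the inductive invariant that $T_{k-1}^{\cN}$ remains in the domain where $\mathcal{R}$ enjoys those small Lipschitz constants, which is the only place where the choice of $\varepsilon$ must be tied to $m$ beyond what accuracy alone demands.
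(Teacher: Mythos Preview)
Your proposal is correct and follows essentially the same route as the paper: triangle-inequality split, per-step Lipschitz analysis of $\mathcal{R}$ yielding amplification $1+O(h^2)$, discrete Gronwall to get $E_m = O(m\varepsilon)$, and then a row-sum bound on $(T_m - T_m^{\cN})F$. The only notable difference is bookkeeping: you invoke the scaling $\|F\|_{l^\infty}=O(h^2)$ (implicit in the paper's normalization $\Lambda=\sum h^2 a_i I_i$) to take $\varepsilon=m^{-2}$, whereas the paper treats $\|F\|_\infty$ as $O(1)$ and takes $\varepsilon=m^{-3}$; either choice yields $|\theta|=O(\mathrm{polylog}(m))$ for $\mathcal{N}_{\mathcal{R}}$ and hence the same width/depth bounds.
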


Next, the error estimate of DeepONets approximating the solution operator of \eqref{2dreactionpde} can be done using the idea of proof of Theorem \ref{thm:main_burgers} and Remark \ref{rmk:dimdependence}.

\begin{thm}
	For any given $f\in L^{\infty}$, let $\mathcal{G}^f(a)$ be the solution operator and $\cG_{\bN}^f(\mathbf{a}_{m})$ be the DeepONets network.  Then, there exist a branch network $g^{\mathcal{N}}  (\mathbf{a}_m; \Theta)$  of width $N_{g^{\cN}}= \mathcal{O}( m^2 \ln(m))$ and  depth $L_{g^{\cN}} = \mathcal{O}( m\ln(m))$, 
	%and a trunk network $f_1^{\mathcal{N}}( x ;\theta^{(k)})$ of size $|\theta^{(k)}| =\mathcal{ (2\ln p)^{2}}$ and depth $L_{f_1^{\mathcal{N}}} = \mathcal{O}(|\theta^{(k)}|^{\frac{1}{2}}\ln(|\theta^{(k)}|))$, $k=1,\cdots, p$, such that 
%\begin{eqnarray*}
%\norm{\cG^f(a)-\cG_{\mathbb{N}}^F (\mathbf{a}^m) }_{L^2} \leq C_1\Big( p^{-\frac{1}{2}} + m^{-\frac{r}{2}} + \abs{N_{g^{\cN}} L_{g^{\cN}}}^{-\frac{1}{3}+\epsilon}+ \exp(-\frac{1}{2}|\theta^{(k)}|^{\frac{1}{3}}) \Big),
%\end{eqnarray*}
and  ReLU trunk networks $f^{\mathcal{N}}( x ;\theta^{(k)})$ of size $|\theta^{(k)}|=\mathcal{O}(1)$, $k=1,\cdots, p$, such that 
% width $N_{f^{\cN}} = \mathcal{O}(1)$ and depth $L_{f^{\mathcal{N}}}=\mathcal{O}(1)$, 
\begin{eqnarray*}
\norm{\cG^f(a)-\cG_{\mathbb{N}}^F (\mathbf{a}^m) }_{L^{\infty}} \leq C\Big( m^{-\frac{r}{2}} + \abs{N_{g^{\cN}} L_{g^{\cN}}}^{-\frac{1}{3}+\epsilon} \Big),
\end{eqnarray*}
where $\epsilon>0$ is arbitrarily small and $C >0$ is independent of $m$, $N_{g^{\cN}}$, $L_{g^{\cN}}$ and $a(x,y)$. Here $ \cG_{\bN}^f(\mathbf{a}_{m}) = g^{\mathcal{N}} (\mathbf{a}_m; \Theta) (f^{\mathcal{N}}( x ;\theta^{(2)}),\ldots, f^{\mathcal{N}}( x ;\theta^{(p)}))^\top$. 
\end{thm}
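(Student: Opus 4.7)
The plan is to combine the branch network of Theorem \ref{thm:2dreactionbranch} with extremely simple trunk networks that reproduce the nodal basis associated with the finite-difference grid, and then collapse the resulting estimate by trading the factor $m$ for $(N_{g^{\cN}}L_{g^{\cN}})^{1/3}$. Concretely, I would choose $p=m$, take $\{\phi_k(y)\}_{k=1}^{m}$ to be the bilinear tent functions associated with the grid $\{(x_{j_1},y_{j_2})\}_{j_1,j_2=1}^{\sqrt m}$, and observe that each $\phi_k$ is a continuous piecewise-linear function that is exactly representable as a ReLU network of constant size; these will serve as the trunk networks $f^{\mathcal{N}}(y;\theta^{(k)})$. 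For the branch, I would use the blessed representation of Theorem \ref{thm:2dreactionbranch}, so that the $k$-th branch output is the $k$-th component of $g^{\cN}(\mathbf{a}_m)=T_m^{\cN}F$, giving $\cG_{\bN}^f(\mathbf{a}_m)(y)=\sum_{k=1}^{m}(g^{\cN}(\mathbf{a}_m))_k\,\phi_k(y)$.

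The next step is to split the error through the triangle inequality
\begin{equation*}
\|\cG^f(a)-\cG_{\bN}^f(\mathbf{a}_m)\|_{L^{\infty}}
\leq
\|\cG^f(a)-\Pi_m U\|_{L^{\infty}}
+\|\Pi_m U-\Pi_m U_N\|_{L^{\infty}}
+\|\Pi_m U_N-\cG_{\bN}^f(\mathbf{a}_m)\|_{L^{\infty}},
\end{equation*}
where $\Pi_m$ denotes piecewise bilinear interpolation at the grid. The first term is a standard bilinear interpolation error of order $m^{-s/2}$ in terms of the regularity index $s$ of $u$ (cf. Remark \ref{rmk:dimdependence}); the second term reduces to the nodal error $\|U-U_N\|_{l^{\infty}}\lesssim m^{-r/2}$ of the central finite-difference scheme; the third term is bounded by $\|U_N-g^{\cN}(\mathbf{a}_m)\|_{l^{\infty}}$ since the trunk basis is a partition of unity of tent functions, and this is exactly the blessed-network error estimated in Theorem \ref{thm:2dreactionbranch}, namely $\mathcal{O}(m^{-r/2}+m^{-1})$.

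Finally, I would convert the $m^{-1}$ term into a statement about the branch-network capacity. By Theorem \ref{thm:2dreactionbranch} the width and depth satisfy $N_{g^{\cN}}L_{g^{\cN}}=\mathcal{O}(m^{3}(\ln m)^{2})$, so for any $\epsilon>0$ we may absorb the logarithm and conclude that $m^{-1}=\mathcal{O}((N_{g^{\cN}}L_{g^{\cN}})^{-1/3+\epsilon})$. Combining this with the interpolation and finite-difference contributions produces the claimed bound $C(m^{-r/2}+(N_{g^{\cN}}L_{g^{\cN}})^{-1/3+\epsilon})$.

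The main obstacle I anticipate is bookkeeping at the interface between the grid-level estimate of Theorem \ref{thm:2dreactionbranch} and the continuous $L^{\infty}$ norm on $\Omega$: one has to verify that the bilinear interpolant $\Pi_m U_N$ inherits the $l^{\infty}$ bound of $U-g^{\cN}(\mathbf{a}_m)$ uniformly, which requires that the tent basis is a partition of unity. A secondary subtlety is that the $m^{-1}$ term in Theorem \ref{thm:2dreactionbranch} comes from choosing $\varepsilon\sim m^{-1}$ in the rational approximation \eqref{networkerror}; checking that this choice is compatible with the stated widths and depths (so that no extra logarithmic factors escape the $\epsilon$ slack) is the only delicate piece of the calculation.
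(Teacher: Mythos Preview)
Your proposal is correct and follows essentially the same route as the paper: set $p=m$, use exact ReLU representations of the nodal (bilinear) basis as trunk networks, split the $L^\infty$ error via the triangle inequality into an interpolation piece, a finite-difference piece, and the blessed-network piece from Theorem~\ref{thm:2dreactionbranch}, and then rewrite $m^{-1}$ as $(N_{g^{\cN}}L_{g^{\cN}})^{-1/3+\epsilon}$ using $N_{g^{\cN}}L_{g^{\cN}}=\mathcal{O}(m^3(\ln m)^2)$.

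The one point you leave vague is the interpolation term: you write it as $m^{-s/2}$ with $s$ the regularity index, but for this term to be absorbed into $(N_{g^{\cN}}L_{g^{\cN}})^{-1/3+\epsilon}$ you need $s\ge 2$. The paper supplies exactly this: since $a,f\in L^\infty$ for the reaction--diffusion equation \eqref{2dreactionpde}, elliptic regularity gives $u\in W^{2,\infty}$, so the bilinear interpolation error is $h^2\sim m^{-1}$. You should state this explicitly. A minor correction to your last paragraph: in the proof of Theorem~\ref{thm:2dreactionbranch} the rational-approximation tolerance is taken as $\varepsilon\sim m^{-3}$ (not $m^{-1}$), which after the accumulated factor $cm^2\|F\|_\infty$ yields the $m^{-1}$ bound on $\|U_N-g^{\cN}(\mathbf{a}_m)\|_{l^\infty}$; this is what makes $N_{\mathcal R}L_{\mathcal R}=\mathcal{O}(\ln m)$ and hence the stated width and depth.
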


\begin{proof}
Notice that $a$ and $f \in L^{\infty}$ implies $u \in W^{2,\infty}$. Hence, the interpolation error is $h^2 \sim m^{-1}$. The rest of proof follows the proof of Theorem \ref{thm:main_2dadvreac}.
\end{proof}

%-----------------------------------------------------------------------------------------------------------------

\subsection{2D steady advection-diffusion equations}
Consider the 2D steady advection-diffusion equation with a given boundary condition:
\begin{eqnarray}\label{2dadvectionpde}
\left\{\begin{array}{ll}
-\Delta u + \mathbf{a}\cdot  \nabla u = f, \quad \hbox{in} \ \Omega,\\
\mathcal{B}u = 0, \quad \hbox{on} \ \partial \Omega,
\end{array}\right.
\end{eqnarray}
where $ \mathbf{a} = [a_1(x,y), a_2(x,y)]$ and $\mathcal{B}$ can be the Dirichlet, Neumann or Robin boundary operator.   Following the idea in the previous section, we consider a finite difference discretization for \eqref{2dadvectionpde} in order to obtain a linear system. For example,  we apply the central difference scheme to discretize the partial derivatives and denote the corresponding 1-D differential matrix by $D\in \mathbb{R}^{\sqrt{m}\times \sqrt{m}}$, i.e.,  $(D)_{ij} =\pm 1$ when $i=j \mp 1$, and $(D)_{ij} =0$  otherwise, 
%\begin{eqnarray*}
%(D)_{ij} = \left\{\begin{array}{ll}
%1,& i=j-1,\\
%-1, &i=j+1,
%\end{array}\right.
%\end{eqnarray*}
 then we have:
\begin{eqnarray}\label{ls2dadvectionpde0}
\Big( S+ D_x + D_y \Big) U_N = F,
\end{eqnarray}
where $S$ is  the stiffness matrix, $D_x = \Lambda_1 (I_{\sqrt{m}\times \sqrt{m}}\otimes D)$ and $D_y = \Lambda_2 (D \otimes I_{\sqrt{m}\times \sqrt{m}})$, where $\otimes$ denotes the Kronecker product for matrices and $\Lambda_k$ are diagonal matrices whose diagonal entries are values of $a_k(x,y)$ at each node, $k=1,2$, respectively. Hence, we can rewrite \eqref{ls2dadvectionpde0} as
\begin{eqnarray}
\Big( S+ h \sum_{i=1}^m a_1^i e_i\hat{R}_i + h \sum_{j=1}^m a_2^j e_j \widetilde{R}_j \Big) U_N = F,
\end{eqnarray}
where $\hat{R}_i$, $\widetilde{R}_i\in \mathbb{R}^{1\times m}$ is the $i$-th row of $ (I_{\sqrt{m}\times \sqrt{m}}\otimes D)$ and $(D \otimes I_{\sqrt{m}\times \sqrt{m}})$, respectively. 
\iffalse 
%, i.e.,
\begin{eqnarray*}
\hat{R}_i = \left\{\begin{array}{ll}
(\hat{R}_i)_{i+1} = 1, \ (\hat{R}_i)_{j} = 0 \ \hbox{for other} \ j, &  i = l\sqrt{m}+1, \quad l=0,1,\cdots, \sqrt{m}-1,\\
(\hat{R}_i)_{i-1}=-1, \ (\hat{R}_i)_{j} = 0 \ \hbox{for other} \ j , & i = l\sqrt{m}, \quad l=1,2,\cdots, \sqrt{m},\\
(\hat{R}_i)_{i+1} = 1, \ (\hat{R}_i)_{i-1}=-1, \ (\hat{R}_i)_{j} = 0 \ \hbox{for other} \ j, & \hbox{otherwise} \ i,
\end{array}\right.\\
\widetilde{R}_i = \left\{\begin{array}{ll}
(\widetilde{R}_i)_{i+\sqrt{m}} = 1, \ (\widetilde{R}_i)_{j} = 0 \ \hbox{for other} \ j, &  i = 1,\cdots, \sqrt{m},\\
(\widetilde{R}_i)_{i-\sqrt{m}}=-1, \ (\widetilde{R}_i)_{j} = 0 \ \hbox{for other} \ j , & i = m-\sqrt{m}+1,m-\sqrt{m}+2,\cdots, m,\\
(\widetilde{R}_i)_{i+\sqrt{m}} = 1, \ (\widetilde{R}_i)_{i-\sqrt{m}}=-1, \ (\widetilde{R}_i)_{j} = 0 \ \hbox{for other} \ j, & \hbox{otherwise} \ i.
\end{array}\right.
\end{eqnarray*}
\fi 
Then, define $T_0 := S^{-1}$ and 
\begin{eqnarray*}
T_k:=
\left\{\begin{array}{ll}
\Big( S+\sum_{i=1}^k h a_1^i  e_i\hat{R}_i\Big)^{-1}, & k=1\cdots,m,\\
\Big( S+ \sum_{i=1}^m ha_1^i e_i\hat{R}_i  + \sum_{j=1}^{k-m} h a_2^j e_j\widetilde{R}_i\Big)^{-1}, & k=m+1,\cdots, 2m,
\end{array}\right.
\end{eqnarray*}
and maps $G_k: \mathbb{R}\times \mathbb{R}^{m\times m} \rightarrow \mathbb{R}^{m\times m}$, s.t.
\begin{eqnarray*}
G_k(\alpha, M):=
\left\{\begin{array}{ll}
M-\frac{\alpha}{1+\alpha(\hat{R}_k M e_k)} M e_k \hat{R}_k M, & k=1,\cdots, m,\\
M-\frac{\alpha}{1+\alpha(\widetilde{R}_{k-m} M e_{k-m})} M e_{k-m} \widetilde{R}_{k-m} M, & k=m+1,\cdots, 2m,
\end{array}\right.
\end{eqnarray*}
which implies that $T_k = G_k(h a_k, T_{k-1})$, where $a_k = a^k_1$ for $k=1,\cdots,m$ and $a_k = a^{k-m}_2$ for $k=m+1,\cdots,2m$. Similar to the discussion of operators for the diffusion-reaction equation,  $G_k$ is a highly sparse map. 
\iffalse 
 as the matrix 
\begin{eqnarray*}
(T_k)_{ij} = 
\left\{\begin{array}{llllll}
\mathcal{R}\Big(ha_k,  (T_{k-1})_{ij}, (T_{k-1})_{k+1,k}, (T_{k-1})_{ik}, (T_{k-1})_{k+1,j} \Big), \\ \quad  \quad k = l\sqrt{m}+1, \quad l=0,1,\cdots, \sqrt{m}-1,\\
\mathcal{R}\Big(ha_k,  (T_{k-1})_{ij}, -(T_{k-1})_{k-1,k}, (T_{k-1})_{ik}, -(T_{k-1})_{k-1,j} \Big), \\ \quad \quad k = l\sqrt{m},  \quad l=1,2,\cdots, \sqrt{m},\\
\mathcal{R}\Big(ha_k,  (T_{k-1})_{ij}, (T_{k-1})_{k+1,k}-(T_{k-1})_{k-1,k}, (T_{k-1})_{ik}, (T_{k-1})_{k+1,j}-(T_{k-1})_{k-1,j}  \Big),\\ \quad \quad \hbox{other} \ k\leq m,\\
\mathcal{R}\Big(ha_k,  (T_{k-1})_{ij}, (T_{k-1})_{k-m+\sqrt{m},k-m}, (T_{k-1})_{i,k-m}, (T_{k-1})_{k-m+\sqrt{m},j} \Big),  \\ \quad\quad  k-m = 1,\cdots, \sqrt{m},\\
\mathcal{R}\Big(ha_k,  (T_{k-1})_{ij}, -(T_{k-1})_{k-m-\sqrt{m},k-m}, (T_{k-1})_{i,k-m}, -(T_{k-1})_{k-m-\sqrt{m},j}   \Big), \\ \quad \quad  k-m = m-\sqrt{m}+1,\cdots, m,\\
\mathcal{R}\Big(ha_k,  (T_{k-1})_{ij}, (T_{k-1})_{k-m+\sqrt{m},k-m}- (T_{k-1})_{k-m-\sqrt{m},k-m}, \\
\quad \quad (T_{k-1})_{i,k-m}, (T_{k-1})_{k-m+\sqrt{m},j}-(T_{k-1})_{k-m-\sqrt{m},j}  \Big), \\ \quad \quad \hbox{other} \ k> m,
\end{array}\right.
\end{eqnarray*}
where $\mathcal{R}$ is defined in \eqref{defrationalelliptic}. 
\fi 
Then, the branch network can be constructed using the blessed representation in the previous section.

%%%%%%%%%%%%%%%%%%%%%%%%%%%%%%%%%%%%%%%%%%%%%%%%%%%%%%%%%%%
\iffalse 
 And the error estimate of the branch network is given by the following corollary. 
\begin{cor}
Let $\cG^f(a_1,a_2)$, $\cG_N^F(\mathbf{a}_1^m, \mathbf{a}_2^m)$ be the solution operator, numerical solution operator to \eqref{2dreactionpde}, respectively. 
Let $\{ (x_{j_1}, y_{j_2})\}_{j_1,j_2=1}^{\sqrt{m}}$ be the set of nodes given by the numerical scheme and $U$ be given by \eqref{defU}. Then there exist a generic constant $C$ and a ReLU network $g^{\cN}(\mathbf{a}_1^m, \mathbf{a}_2^m)$, given by \eqref{defbranch2delliptic}, having the width $N_{g^{\cN}} = \mathcal{O}(m^2 \ln(m))$ and the depth $L_{g^{\cN}} \leq \mathcal{O}(m\ln(m))$, s.t. 
\begin{eqnarray*}
\| U - g^{\cN}(\mathbf{a}_1^m, \mathbf{a}_2^m)\|_{l^{\infty}} \leq C\Big( m^{-\frac{r}{2}} + m^{-s}\Big).
\end{eqnarray*}
\end{cor}
\fi 
%%%%%%%%%%%%%%%%%%%%%%%%%%%%%%%%%%%%%%%%%%%%%%%%%%%%%%%%%%%
 
\subsection{Concluding remarks}
The analysis for linear advection-diffusion equations also works for   elliptic equations in a divergence form: $-\nabla \cdot (a \nabla u) = f$, where $0<c \leq a(x,y)\leq C$. In fact, one may convert it into a non-divergence form and   $-\Delta u -  {a}^{-1}\nabla a  \cdot \nabla u = a^{-1}f$. Let $\mathbf{b}= -{a}^{-1}\nabla a  $ and $\tilde{f}=a^{-1}f$.  A similar convergence rate can be obtained using  the linearity of the solution operator with respect to the right hand side and the fact that 
$-{a}^{-1}\nabla a$ and $a^{-1}$ can be well approximated by ReLU networks with the input $\mathbf{a}_m$. 
%Hence, there are two more steps for constructing $g_\cN$: 1) calculate $a_1 = \frac{\partial_x a}{a}$ and $a_2 = \frac{\partial_y a}{a}$ in terms of values of $a(x,y)$ and plug them into the network; 2) after getting $T_{2m}^{\cN}$ by the process given in this section, take $T_{2m}^{\cN} = T_{2m}^{\cN} \Lambda^-$ to get solution operator approximated by the network, where $\Lambda^- = \hbox{diag}([\frac{1}{a(x_{1}, y_{1})}, \cdots, \frac{1}{a(x_{\sqrt{m}}, y_{\sqrt{m}})}])$.

The analysis can be extended to the steady advection-reaction-diffusion equation  $-\Delta u + \mathbf{a} \cdot \nabla u + a_3 u = f$, where $ \mathbf{a}=[a_1,a_2]^\top$ and  $a_1,a_2,a_3,f \in L^{\infty}(\Omega)$. After discretizing the PDE with a finite difference scheme, we may obtain a linear system of  the form: $(S+D_x+D_y + \Lambda) U_N =F$.

For nonlinear problems and time-dependent problems, we expect that additional efficient iterative schemes may be used and thus may lead to extra layers in branch networks. In fact, in each iteration of  the additional iterative schemes, there is a linear system where we can blessed representations.

% -----------------------------------------------------------------------------------------------------------------

\section{Proofs}\label{chap5_proofs}
In this section, we present some proofs.

\subsection{Proof of Theorem \ref{thm:deep-o-nets-error}}
\begin{proof}
	By the triangle inequality, 
	\begin{eqnarray} \label{eq:operator-error-split}
	\norm{\cG (u )-\cG_{\bN}(\mathbf{u}_m ) }_{Y} &\leq& \norm{\cG (u)-\cG (\mathcal{I}_mu) }_{Y} +  \norm{\cG (\mathcal{I}_m u )-B_R^\gamma  \cG (\mathcal{I}_m u) }_{Y} \notag \\ 
	%+\norm{B_R^\gamma\cG (\mathcal{I}_m u )-\cG_{\bN} (\mathcal{I}_m u) }_{Y} \\
	&&+ 	 \norm{B_R^\gamma\cG (\mathcal{I}_m u )-\widetilde{\cG}_{\bN} (\mathcal{I}_m u) }_{Y}+\norm{   \widetilde{\cG}_{\bN} (\mathcal{I}_m u)-\cG_{\bN} (\mathcal{I}_m u ) }_{Y},
	\end{eqnarray}
	where $B_R^\gamma v$ is the Bochner-Riesz mean of Fourier series of $v$  
	and %and $R^d \sim  p$.  Let 
	\begin{equation*}
	\widetilde{\cG}_{\bN}(u)(y) =\sum_{k=1}^p c_k(\cG (u))f^{\mathcal{N}}(  y;\theta^{(k)}) ,
	\end{equation*}
	given that  $c_k(\cdot)$ is a linear functional defined on $\cG(V)$, $1\leq k\leq p$. By \eqref{eq:operator-holder-continuity}, we have 
	\begin{equation}
	\norm{\cG (u)-\cG (\mathcal{I}_mu) }_{Y}\leq   C\norm{u-\mathcal{I}_m u}_{X}^{\alpha} . 
	\end{equation}
	By Theorem \ref{thm:bochner-riesz-fourier-error} and the fact that $R^d\sim p$, 
	\begin{equation}
	\norm{\cG (\mathcal{I}_m u )-B_R^\gamma  \cG (\mathcal{I}_m u) }_{Y}\leq 
	C \omega_2(\cG (\mathcal{I}_m u ), p^{-1/d}). 
	\end{equation}
	By the assumption that  $u\in V$ and the fact that $c_k(\cdot)$ are linear combinations of Fourier transform of ``$\cdot$", we obtain that  
	%$\sum_{k=1}^p\abs{c_k(\cG (\mathcal{I}_m u ))}\leq   C$ 
	$\max_{1\leq k \leq p} \abs{c_k(\cG (\mathcal{I}_m u ))} \leq C$ and thus
	\begin{eqnarray*}\label{thm33part3}
	 \norm{B_R^\gamma\cG (\mathcal{I}_m u )-\widetilde{\cG}_{\bN} (\mathcal{I}_m u) }_{Y}=\norm{\sum_{k=1}^p   c_k(\cG (\mathcal{I}_m u ))(e_k-  f^{\mathcal{N}}(  y;\theta^{(k)}) )}_Y \leq C \sum_{k=1}^p  \norm{e_k-  f^{\mathcal{N}}(  \cdot;\theta^{(k)})}_Y.
	\end{eqnarray*}	
	Recall that $e_k$ is the Fourier basis so we may apply 
	%{\color{blue}
	Theorem \ref{thm:conveg-rate-analytic-ReLU} to obtain
	\begin{equation}\label{eq:error-basis-trunk}
	\norm{e_k-  f^{\mathcal{N}}(  \cdot;\theta^{(k)})}_Y \leq  C   \exp\Big(-|\theta^{(k)}|^{\frac{1}{1+d}} \Big) ,
	\end{equation}
	where $|\theta^{(k)}|$ is the size of the  trunk network and 
	$L_{f^{\mathcal{N}}}=C |\theta^{(k)}|^{\frac{1}{1+d}}\ln(|\theta^{(k)}|)$ is the number of layers in the trunk network, which implies that 
	\begin{eqnarray*}
	\norm{B_R^\gamma\cG (\mathcal{I}_m u )-\widetilde{\cG}_{\bN} (\mathcal{I}_m u) }_{Y} \leq C p \exp\Big(-|\theta^{(k)}|^{\frac{1}{1+d}} \Big).
	\end{eqnarray*}
	The last term in \eqref{eq:operator-error-split} can be bounded by 
	\begin{eqnarray*}
	&&\norm{   \widetilde{\cG}_{\bN} (\mathcal{I}_m u)-\cG_{\bN} (\mathcal{I}_m u ) }_{Y} \notag \\
%	&=& \norm{\sum_{k=1}^p [ c_k(\cG (\mathcal{I}_m u )) - \sum_{i=1}^n c_i^k g^{\mathcal{N}}(\mathbf{u}_m;\Theta^{(k,i)}) ] f^{\mathcal{N}}(  y;\theta^{(k)}) }_{Y} \notag \\
	&\leq & \norm{\sum_{k=1}^p [ c_k(\cG (\mathcal{I}_m u )) - \sum_{i=1}^n c_i^k g^{\mathcal{N}}(\mathbf{u}_m;\Theta^{(k,i)}) ] (f^{\mathcal{N}}(  y;\theta^{(k)}) -e_k) }_{Y} +\norm{\sum_{k=1}^p [ c_k(\cG (\mathcal{I}_m u )) - \sum_{i=1}^n c_i^k g^{\mathcal{N}}(\mathbf{u}_m;\Theta^{(k,i)}) ] e_k  }_{Y}\notag \\
	&\leq & C \sum_{k=1}^p \abs{ c_k(\cG (\mathcal{I}_m u )) - \sum_{i=1}^n c_i^k g^{\mathcal{N}}(\mathbf{u}_m;\Theta^{(k,i)})}.
	\end{eqnarray*}	
		Recall that
	% 	 $c_k(\cdot)$ is a  linear functional and  
	the H\"older continuity \eqref{eq:operator-holder-continuity} and the stability of the interpolation $\norm{\mathcal{I}_m u}_X\leq C \norm{u}_V$, $\forall u\in V$, implied by \eqref{interpltcdt}, we have 
	\begin{equation}\label{eq:linear-func-holder-continuity}
	\sum_{k=1}^p	\abs{c_k(\cG (\mathcal{I}_m u ) ) - c_k(\cG (\mathcal{I}_m v) )}\leq C\norm{\cG (\mathcal{I}_m u)- \cG (\mathcal{I}_m v) }_Y\leq C\norm{  \mathcal{I}_m u-\mathcal{I}_m v}_X^\alpha \leq C \abs{{\bm u}_m -{\bm v}_m}^\alpha,
	%where is p?????????????
	\end{equation}
	%	}
	which implies that 
	$c_k(\cG (\mathcal{I}_m u ))$
	is H\"older continuous   in $(u_1,u_2,\ldots,u_m)$. 
	According to Theorem \ref{thm:complexity-ReLU-generic}, 
	\begin{eqnarray}\label{eq:error-basis-branch} 
	\sum_{k=1}^p  \abs{ c_k(\cG (\mathcal{I}_m u )) - \sum_{i=1}^n c_i^k g^{\mathcal{N}}(\mathbf{u}_m;\Theta^{(k,i)})} \leq Cp \sqrt{m}N_{g^{\cN}}^{-2\alpha/m} L_{g^{\cN}}^{-2\alpha/m},
	\end{eqnarray}
	where $N_{g^{\cN}}$ is the number of neurons per layer of the  branch network and 
	$L_{g^{\cN}}$ is the number of layers of the branch network. 
\end{proof}

\subsection{Proof of Theorem \ref{thm:error-deeponet-linear}}
\begin{proof}%[Proof of Theorem \ref{thm:error-deeponet-details}
	Recall the conclusion in Theorem \ref{thm:deep-o-nets-error} that 
	\begin{eqnarray*}  
		\norm{\cG (u )-\cG_{\bN}(\mathbf{u}_m ) }_{Y} &\leq& C\norm{u-\mathcal{I}_m u}_{X}^{\alpha}+	C \omega_2(\cG (\mathcal{I}_m u ), p^{-1/d})_q\notag \\
		&&+C \sum_{k=1}^p \abs{ c_k(\cG (\mathcal{I}_m u )) - \sum_{i=1}^n c_i^k g^{\mathcal{N}}(\mathbf{u}_m;\Theta^{(k,i)})} +C \sum_{k=1}^p  \norm{e_k-  f^{\mathcal{N}}(  \cdot;\theta^{(k)})}_Y.
	\end{eqnarray*}
	It's trivial that any linear operator is Lipchitz continuous. On the other hand, if $\mathcal{G}$ is a linear operator, then, by \eqref{eq:deep-o-nets-linear-g}, every $c_k(\mathcal{G}(v)) = \sum_{l=1}^m c^k_l v(x_l)$, $k=1,\cdots, p$, is a linear function with respect to $(v(x_1),v(x_2),\ldots,v(x_m))$. By using the basic fact that any $m$-variable linear function can be exactly expressed by a shallow network that contains $m$ neurons, we can have
	\begin{eqnarray*}
		c_k(\cG (\mathcal{I}_m u )) - \sum_{i=1}^n c_i^k g^{\mathcal{N}}(\mathbf{u}_m;\Theta^{(k,i)}) = 0, \quad \forall k.
	\end{eqnarray*}
	%}
Finally, by \eqref{eq:error-basis-trunk}, 
	$C \sum_{k=1}^p  \norm{e_k-  f^{\mathcal{N}}(  \cdot;\theta^{(k)})}_Y \leq C \exp\Big(-\frac{1}{2}|\theta^{(k)}|^{\frac{1}{1+d}}\Big)$, if $|\theta^{(k)}|\geq (2\ln p)^{1+d}$.
\end{proof}

\subsection{Proof of Theorem \ref{thm:oprterror1d}}
\begin{lem}\label{lmbdd}(see \cite{holden2015front})
	Let $u_0\in W^{1,\infty}(-\pi,\pi)$. Then the solution to \eqref{eqtburgers} satisfies the maximum principle
	\begin{eqnarray*}
		\| u(\cdot, t) \|_{L^{\infty}(-\pi,\pi)} \leq \| u_0 \|_{L^{\infty}(-\pi,\pi)}, \quad \forall t>0.
	\end{eqnarray*}
\end{lem}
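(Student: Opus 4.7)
The plan is to establish both bounds by a standard parabolic maximum principle, exploiting the fact that the Burgers nonlinearity $uu_x$ vanishes at any spatial extremum of $u$. Because the boundary condition is periodic, I can regard the solution as living on the spatial torus $\mathbb{T}=\mathbb{R}/2\pi\mathbb{Z}$, which removes any spatial boundary contribution and makes every extremum in space effectively interior. Before invoking the maximum principle pointwise, I need enough regularity; this comes for free from the Cole--Hopf representation \eqref{burgers-ch-pbc}, since $v_0$ is bounded above and below by $\exp(\pm \pi\|u_0\|_{L^\infty}/\kappa)>0$, so by the heat semigroup $v$ is $C^\infty$ and strictly positive on $\mathbb{T}\times(0,T]$, and hence $u=-2\kappa v_x/v$ is smooth in $(0,T]$ and continuous up to $t=0$.

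For the upper bound, fix $T>0$ and $\varepsilon>0$ and introduce the auxiliary function
\begin{equation*}
w_\varepsilon(x,t):=u(x,t)-\|u_0\|_{L^\infty(-\pi,\pi)}-\varepsilon t.
\end{equation*}
Since $w_\varepsilon$ is continuous on the compact set $\mathbb{T}\times[0,T]$, it attains its maximum at some $(x_0,t_0)$. If $t_0=0$ then $w_\varepsilon(x_0,0)=u_0(x_0)-\|u_0\|_{L^\infty}\leq 0$ and we are done. Otherwise $t_0>0$ and, by standard calculus at an interior maximum,
\begin{equation*}
\partial_t w_\varepsilon(x_0,t_0)\geq 0,\qquad \partial_x w_\varepsilon(x_0,t_0)=0,\qquad \partial_{xx}w_\varepsilon(x_0,t_0)\leq 0,
\end{equation*}
which translate to $u_t(x_0,t_0)\geq\varepsilon$, $u_x(x_0,t_0)=0$, and $u_{xx}(x_0,t_0)\leq 0$. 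Substituting into the PDE \eqref{eqtburgers} gives
\begin{equation*}
\varepsilon\leq u_t(x_0,t_0)=-u\,u_x+\kappa u_{xx}\big|_{(x_0,t_0)}=\kappa u_{xx}(x_0,t_0)\leq 0,
\end{equation*}
a contradiction. Hence $w_\varepsilon\leq 0$ on $\mathbb{T}\times[0,T]$, and sending $\varepsilon\to 0^+$ and then taking $T$ arbitrary yields $u(x,t)\leq\|u_0\|_{L^\infty}$ for all $t>0$.

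The lower bound is obtained symmetrically by applying the same argument to the function $-u(x,t)-\|u_0\|_{L^\infty}-\varepsilon t$: at a would-be positive maximum the spatial derivative of $u$ again vanishes so $uu_x$ drops out, and the inequality $u_t=\kappa u_{xx}\geq 0$ contradicts $u_t\leq -\varepsilon$. The only real obstacle is the regularity needed to evaluate the PDE at the max point, and the Cole--Hopf smoothing handles this cleanly; alternatively one could first approximate $u_0$ by smooth initial data, apply the bound in that case, and pass to the limit using $L^\infty$-continuous dependence of solutions on the initial datum in $\mathcal{S}$.
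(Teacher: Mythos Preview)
Your argument is correct: the auxiliary-function trick with $w_\varepsilon=u-\|u_0\|_{L^\infty}-\varepsilon t$ and its mirror image is the standard parabolic maximum-principle proof, and your use of the Cole--Hopf representation to secure $C^\infty$ regularity for $t>0$ on the torus cleanly justifies evaluating the PDE at the extremum.

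The paper itself does not write out a proof at all; it simply observes that the argument of Theorem~B.1 in Holden--Risebro \cite{holden2015front} goes through for $u_0\in W^{1,\infty}$. Your proof is essentially what one expects that reference to contain (a direct maximum-principle argument for the viscous scalar conservation law), so the approaches are the same in spirit---you have just made the details explicit and tailored the regularity step to the periodic setting via Cole--Hopf, which is a tidy and self-contained choice.
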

\begin{proof}
	The proof of Theorem B.1 in \cite{holden2015front}, Appendix B, works for $u_0\in W^{1,\infty}(-\pi,\pi)$.
\end{proof}

\begin{proof}[Proof of Theorem \ref{thm:oprterror1d}]
	First, every $v_i^0$ can be computed accurately and expressed as the product of $V_j$ exactly because $u_0$ is piecewise linear:
	\begin{eqnarray*}
		v_i^0 &=& \exp\Big(-\frac{1}{2\kappa} \int_{0}^{x_i} u_0(s)ds\Big) = \exp\Big( -\frac{1}{2\kappa} \Big[ \sum_{j=1}^i \int_{x_{j-1}}^{x_j} \frac{u_{0,j}-u_{0,j-1}}{h_j}(y-x_{j-1})+u_{0,j-1}dy  \Big] \Big) \\
		&=&  \exp\Big( -\frac{1}{2\kappa} \Big[ \sum_{j=1}^i \frac{1}{2}(u_{0,j}+u_{0,j-1}) h_j\Big] \Big)= \prod_{j=1}^i  \exp(-\frac{u_{0,i}+u_{0,i-1}}{4\kappa}   h_j) = \prod_{j=1}^i V_j.
	\end{eqnarray*}
	Then, notice that 
	\begin{eqnarray*}
		\mathcal{G}(u_0)(\mathbf{x}) - \cG_m  (\mathbf{u}_{0,m}; \mathbf{x}) = u(\mathbf{x})\frac{ \int_{\mathbb{R}}  \mathcal{K}(x,y,t) (\mathcal{I}_m^0v_0 - v_0)(y) dy}{\int_{\mathbb{R}} \mathcal{K}(x,y,t)( \mathcal{I}_m^0 v_0)(y) dy} - \frac{2\kappa \int_{\mathbb{R}}  \partial_x \mathcal{K}(x,y,t) (\mathcal{I}_m^1v_0 - v_0)(y) dy}{\int_{\mathbb{R}} \mathcal{K}(x,y,t)( \mathcal{I}_m^0 v_0)(y) dy}:=I_1+ I_2.
	\end{eqnarray*}
	We observe that 
	\begin{eqnarray*}
		\Big| \partial_x v_0(s)\Big| &=& \Big| v_0(s)  \Big|\cdot \Big| -\frac{1}{2\kappa} u_0(s)\Big|\leq \frac{M_0}{2\kappa} \Big| v_0(s)  \Big| ,\\
		\Big| \partial_{xx} v_0(s)\Big| &=& \Big|\Big(\frac{u_0^2(s)}{4\kappa^2}-\frac{\partial_x u_0(s)}{2\kappa}\Big) \Big| \cdot \Big| v_0(s)\Big| \leq \Big(\frac{M_0^2}{4\kappa^2} + \frac{M_1}{2\kappa}\Big) \Big| v_0(s)\Big|.
	\end{eqnarray*}
	Then, we estimate the two numerators. In every $[x_{i-1}, x_i)$, 
	\begin{eqnarray*}
		\frac{1}{h} \Big| \mathcal{I}_m^0v_0(x) - v_0(x) \Big| \leq \| \partial_x v_0\|_{L^{\infty}[x_{i-1},x_i)} \leq \exp(\frac{M_0h}{2\kappa}) \frac{M_0}{2\kappa} v_0^{i-1} = \Big(\exp(\frac{M_0h}{2\kappa}) \frac{M_0}{2\kappa}\Big) \mathcal{I}_m^0 v_0 .
	\end{eqnarray*}
	So by the comparison principle, 
	\begin{eqnarray*}
		\Big| \int_{\mathbb{R}}  \mathcal{K}(x,y,t) (\mathcal{I}_m^0v_0 - v_0)(y) dy \Big| \leq \Big(\exp(\frac{M_0h}{2\kappa}) \frac{M_0}{2\kappa}h \Big) \int_{\mathbb{R}} \mathcal{K}(x,y,t)( \mathcal{I}_m^0 v_0)(y) dy.
	\end{eqnarray*}
   By Lemma \ref{lmbdd}, $\| u \|_{L^{\infty}}\leq M_0$ and thus  
	\begin{eqnarray*}
		\Big| I_1 \Big| \leq \exp(\frac{M_0h}{2\kappa}) \frac{M_0^2}{2\kappa} h.
	\end{eqnarray*}
Now, we consider the numerator  $	\int_{\mathbb{R}}  \partial_x \mathcal{K}(x,y,t) (\mathcal{I}_m^1v_0 - v_0)(y) dy $. 
	\begin{eqnarray*}
		\int_{\mathbb{R}}  \partial_x \mathcal{K}(x,y,t) (\mathcal{I}_m^1v_0 - v_0)(y) dy = -\int_{\mathbb{R}}  \mathcal{K}(x,y,t) \partial_x(\mathcal{I}_m^1v_0 - v_0)(y) dy.
	\end{eqnarray*}
	In each sub-interval $[x_{i-1}, x_i)$, 
	\begin{eqnarray*}
		\frac{1}{h} \Big| \partial_x \Big(\mathcal{I}_m^1v_0(x) - v_0(x) \Big) \Big| \leq \| \partial_{xx} v_0\|_{L^{\infty}[x_{i-1},x_i)} \leq \exp(\frac{M_0h}{2\kappa}) \Big(\frac{M_0^2}{4\kappa^2} + \frac{M_1}{2\kappa} \Big)v_0^{i-1} = \exp(\frac{M_0h}{2\kappa})\Big(\frac{M_0^2}{4\kappa^2} + \frac{M_1}{2\kappa} \Big) \mathcal{I}_m^0 v_0,
	\end{eqnarray*}
	which implies, by the comparison principle,
	\begin{eqnarray*}
		\Big| I_2 \Big| \leq \exp(\frac{M_0h}{2\kappa})\Big( \frac{M_0^2}{2\kappa} + M_1 \Big) h. 
	\end{eqnarray*}
	For any fixed $\kappa$,   we take $h$ small enough such that $\frac{M_0 h}{2\kappa} \leq \ln 2$, and then 
	\begin{eqnarray*}
		\| \mathcal{G}(u_0) - \cG_m  (\mathbf{u}_{0,m}) \|_{L^{\infty}(-\pi, \pi)\times (0,+\infty)} \leq 2\Big(\frac{M_0^2}{\kappa } + M_1 \Big) h.
	\end{eqnarray*}
\vskip -10pt %	
\end{proof}

\subsection{Proof of Theorem \ref{thm:burgersoprtest1}}
\begin{lem}\label{lem:rtnfuncerr}
	Let $M>0$, and let $A$, $B\in \mathbb{R}$, s.t. $|\frac{A}{B}| \leq M$. Let $c_1$, $c_2$ be some given constants. When $2\abs{c_2h }<1$, we have 
	\begin{eqnarray*}
		\Big| \frac{A}{B}-\frac{A+c_1 A h}{B+c_2 B h} \Big|=\abs{\frac{(c_2-c_1)Ah}{B+c_2Bh}} \leq 2M(|c_1|+|c_2|)h.
	\end{eqnarray*}
\end{lem}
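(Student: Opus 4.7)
The plan is to prove the equality by direct algebraic manipulation, and then bound the resulting expression using the hypotheses $|A/B| \le M$ and $2|c_2 h|<1$.

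First I would verify the equality by putting $\frac{A}{B}-\frac{A+c_1 A h}{B+c_2 B h}$ over the common denominator $B(B+c_2 B h)$. The numerator becomes $A(B+c_2 Bh) - B(A+c_1 Ah) = (c_2-c_1)ABh$, and after cancelling one factor of $B$ from numerator and denominator this yields exactly $\dfrac{(c_2-c_1)Ah}{B+c_2 Bh}$, establishing the stated identity. This step is completely mechanical.

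Next I would bound the magnitude. Factor the denominator as $B(1+c_2 h)$ and rewrite
\[
\left|\frac{(c_2-c_1)Ah}{B+c_2 Bh}\right| = \left|\frac{A}{B}\right|\cdot\frac{|c_2-c_1|\,|h|}{|1+c_2 h|}.
\]
The hypothesis $|A/B|\le M$ handles the first factor. For the denominator, the condition $2|c_2 h|<1$ gives $|c_2 h|<1/2$, so by the reverse triangle inequality $|1+c_2 h| \ge 1-|c_2 h| > 1/2$. Combining these with $|c_2-c_1|\le |c_1|+|c_2|$ yields the bound $2M(|c_1|+|c_2|)h$.

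There is no real obstacle here; the only place where one must be careful is ensuring that $|1+c_2 h|$ is bounded below away from zero, which is precisely why the hypothesis $2|c_2 h|<1$ is imposed. Everything else is arithmetic.
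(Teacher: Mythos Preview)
Your proof is correct and is exactly the natural argument; the paper in fact states this lemma without proof, treating the equality and bound as self-evident. Your verification of the identity by common denominator and the subsequent bound via $|1+c_2 h|>1/2$ is precisely what is needed.
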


\begin{proof} ({Proof of Theorem \ref{thm:burgersoprtest1}})
	For any $\mathbf{x}\in (-\pi,\pi)\times(0,\infty)$, by the triangular inequality and \eqref{defrational}, we have 
	\begin{eqnarray*}
		&& \Big| \mathcal{G}(u_0)(\mathbf{x}) - g^{\mathcal{N}}(\mathbf{u}_{0,m}; \Theta_{\mathbf{x}})\Big|  \\
		&\leq& \Big| \mathcal{G}(u_0)(\mathbf{x}) -  \cG_m  (\mathbf{u}_{0,m}; \mathbf{x})\Big| + \Big| \tilde{\cG}_m  (\mathbf{V}_{0,m}; \mathbf{x})- \tilde{\cG}_m(l^{V}(\mathbf{u}_{0,m});\mathbf{x})\Big| + \Big| \tilde{\cG}_m(l^{V}(\mathbf{u}_{0,m});\mathbf{x}) - g^{\mathcal{N}}(\mathbf{u}_{0,m}; \Theta_{\mathbf{x}}) \Big|\\
		&:=& E_1+ E_2+E_3,
	\end{eqnarray*}
	where $l^V = (l^V_0, l^V_1,\cdots, l^V_{m-1})^\top$, given by
	\begin{eqnarray*}
		l^V_0 = 1, \quad l^V_i = 1 -\frac{u_{0,i}+u_{0,i-1}}{4\kappa}   h_i, \quad i=1,2,\cdots, m-1,
	\end{eqnarray*}
	and $g^{\mathcal{N}}(\mathbf{u}_{0,m}; \Theta_{\mathbf{x}}) = %( \tilde{\mathcal{G}}_{\mathbb{N};\Theta;\mathbf{x}} \circ l^V)(\mathbf{u}_{0,m})$ 
	(\tilde{g}^{\mathcal{N}} \circ l^V) (\mathbf{u}_{0,m}) $
	is defined by the composition of a ReLU network and the linear function. Clearly, $l^V_i$ is the linear approximation of $V_i$, $i=0,\cdots, m-1$. So, for any fixed $\kappa$, when $h$ is small enough,
	\begin{eqnarray*}
		|V_i - l^V_i|  \leq \mathcal{O}(h^2),
	\end{eqnarray*}
	which implies, by using $\frac{1}{2}\leq V_i \leq 2$ for small $h$ additionally,
	\begin{eqnarray*}
		\Big| \Big( \prod_{j=1}^i V_j \Big) - \Big( \prod_{j=1}^i l^V_j \Big) \Big| &=& \Big| (V_1-l^V_1)V_2\cdots V_i +\Big[\sum_{k=2}^{i-1} l^V_1 \cdots l^V_{k-1} (V_k - l^V_k) V_{k+1}\cdots V_{i}\Big] +  l^V_1\cdots l^V_{i-1} (V_i-l^V_i) \Big|\\
		&\leq& Ch^2 \Big( V_2\cdots V_i + \Big[\sum_{k=2}^{i} (V_1+Ch^2) \cdots (V_{k-1}+Ch^2) V_{k+1}\cdots V_{i}\Big] \Big) 
		\\
		& \leq& Ch^2 \Big(\sum_{j=0}^i  \frac{v_0^i}{V_j} \Big)+ Ch^2 \leq 2C(2\pi) v_0^i h +  \mathcal{O}(h^2) \leq 5C\pi v_0^i h.
	\end{eqnarray*}
	Therefore, by using Theorem \ref{thm:oprterror1d} and Lemma \ref{lem:rtnfuncerr}, we have $E_1\leq \mathcal{O}(h)$ and $E_2\leq \mathcal{O}(h) $, respectively. On the other hand, since $\tilde{\cG}_m$ is a rational function, so we may apply Theorem \ref{thm:rational-apprx-ReLU} to derive the error estimate for $E_3$, where $r=m$ and $s=m$. Since $\|u_0\|_{L^{\infty}} \leq M_0$, by using Lemma \ref{lmbdd}, the rational function $ |\tilde{\cG}_m  (l^v)| \leq \|u\|_{L^{\infty}} + \mathcal{O}(h) \leq M_0+\mathcal{O}(h) \leq M_0+1$, if $h$ is small enough. Therefore, $ \tilde{\cG}_m  (l^v) = \frac{p}{q} \leq M_0+1 = 2^k$ implies $k = \log_2(M_0+1)$ is a constant.
	Take $\varepsilon = h$ in \eqref{lm12}, and consider $m \sim \frac{2\pi}{h}$, we have that there exists a neural network $\tilde{g}^{\mathcal{N}}(l^v; \Theta_{\mathbf{x}}) $ of size 
	\begin{eqnarray}
	|\Theta_{\mathbf{x}}| = \mathcal{O}\Big(k^7 \ln(\frac{1}{h})^3 +  k m^2\ln(m^2/h) \Big) = \mathcal{O}\Big(\ln(m)^3 +   m^2\ln(m) \Big),
	\end{eqnarray}
	such that 
	\begin{eqnarray}
	\Big| \mathcal{G}(u_0)(\mathbf{x}) -  g^{\mathcal{N}}(\mathbf{u}_{0,m}; \Theta_{\mathbf{x}}) \Big| \leq \mathcal{O}(h)+ h  = \mathcal{O}(h).
	\end{eqnarray}
\end{proof}

\subsection{Proof of Theorem \ref{thm:main_burgers}}
\begin{proof}
To obtain the $L^{\infty}$ error estimate, for any fixed $t$, we take 
\begin{eqnarray}
\cG_{m,p}(u_0) = \sum_{k=1}^p\cG(\mathcal{I}_{m,x}^0u)(y_k) L_i(y):= \mathcal{I}_{p,y}^1 \Big( \cG(\mathcal{I}_{m,x}^0u)\Big),
\end{eqnarray}
where $L_i(x)$ is the piecewise linear nodal basis at $y_i$. By \cite{XuJ2018}, one only needs a network having width $N=3$ and depth $L=1$ to represent a 1-D piecewise linear nodal basis function exactly, i.e.,
\begin{eqnarray*}
L_i(x) = \frac{1}{h_{i-1}} \hbox{ReLU}(x-x_{i-1}) - \Big(\frac{1}{h_{i-1}}+\frac{1}{h_i} \Big) \hbox{ReLU}(x-x_i) +  \frac{1}{h_{i}} \hbox{ReLU}(x-x_{i+1}).
\end{eqnarray*}
And one only needs a network having width $N = \mathcal{O}(k_h)$ and depth $L =\mathcal{O} (\log_2(k_h)+1)$ to approximate a piecewise linear nodal basis function in high dimension exactly, where $k_h$ is the maximum number of neighboring elements in the grid (e.g. $k_h=6$ for 2-D regular triangular mesh). On the other hand, the regularity of the analytical solution is $\cG (u_0 ) \in W^{1,\infty}$. If one uses linear interpolation, then, by Bramble-Hilbert Lemma, there is a generic constant $C$, s.t.
\begin{eqnarray*}
\| \cG(u_0) - \mathcal{I}_p^1 \cG(u_0)\|_{L^{\infty}} \leq C p^{-1} | \cG(u_0) |_{W^{1,\infty}}.
\end{eqnarray*}
Next, by using the maximal principle of linearity of interpolation and the result in Theorem \ref{thm:burgersoprtest1}, we have:
\begin{eqnarray*}
\| \mathcal{I}_p^1 \cG(u_0) - \mathcal{I}_p^1 g^{\mathcal{N}}(\mathbf{u}_{0,m}, \Theta ) \|_{L^{\infty}}  = \| \mathcal{I}_p^1 \Big(\cG(u_0)-g^{\mathcal{N}}(\mathbf{u}_{0,m}; \Theta ) \Big) \|_{L^\infty} \leq \max_{1\leq i\leq p} \abs{ \cG(u_0)(y_i) - g^{\mathcal{N}}(\mathbf{u}_{0,m}; \Theta^{(i)} )} \leq Cm^{-1}.
\end{eqnarray*}
Finally, if the trunk networks $f^{\cN}(y;\theta^{(i)})$ represent $L_i(y)$, $i=1,\cdots, p$ exactly, then we have
\begin{eqnarray*}
\norm{\cG (u_0 )-\cG_{\bN}(\mathbf{u}_m ) }_{L^\infty} &\leq& \| \cG(u_0) - \mathcal{I}_p^1 \cG(u_0)\|_{L^{\infty}} + \| \mathcal{I}_{p}^1 \cG(u_0) - \mathcal{I}_{p}^1 g^{\mathcal{N}}(\mathbf{u}_{0,m}; \Theta ) \|_{L^\infty} + \| \mathcal{I}_{p}^1 g^{\mathcal{N}}(\mathbf{u}_{0,m}; \Theta ) - \cG_{\bN}(\mathbf{u}_m )\|_{L^\infty}\\
&\leq& C\Big( p^{-1} + m^{-1}  \Big),
\end{eqnarray*}
where $m^{-1} \sim |\Theta|^{-\frac{1}{2}+\epsilon}$ for any arbitrary small $\epsilon>0$.
\end{proof}

\iffalse 
\begin{rem}
In the cases of other 1D PDEs discussed in this work, the theorems on convergence rates of DeepONets follow from the analysis of Theorem \ref{thm:main_burgers}. In the cases of 2D or higher dimensional PDEs, the error analysis can be derived by combining the analysis of Theorem \ref{thm:main_burgers} and Remark \ref{rmk:dimdependence}.
\end{rem}
\fi

\subsection{Proofs in the Section 4.5}
\begin{proof}[Proof of Lemma \ref{prop:1dadvecrationalerr}]
	First, we consider the following error. For $x\in (x_{k-1}, x_k]$, 
	\begin{eqnarray*} 
		\Big| \Big( A^{\pm1}g - \mathcal{I}^0(A^{\pm1}g)\Big)(x) \Big|&=&\Big| \int_{x_{i-1}}^x \Big( A_x(s)g(s) + A(s)g_x(s) \Big)ds \Big|= \Big| \int_{x_{i-1}}^x A(s)\Big( \mp g(s)a(s) + g_x(s) \Big)ds\Big|\\
		&\leq & \|A\|_{L^{\infty}(x_{i-1},x_i)} \Big(\|g\|_{L^{\infty}(x_{i-1},x_i)} \|a\|_{L^{\infty}(x_{i-1},x_i)}+\|g_x\|_{L^{\infty}(x_{i-1},x_i)}\Big) (x - x_{i-1})\\
		&\leq & \mathcal{I}^0(A^{\pm1}) \exp(M_1h_i) \Big(\|g\|_{L^{\infty}(x_{i-1},x_i)} \|a\|_{L^{\infty}(x_{i-1},x_i)}+\|g_x\|_{L^{\infty}(x_{i-1},x_i)}\Big) h_i,
	\end{eqnarray*}
	where $h_i = x_i - x_{i-1}$, $h=\max_{i} h_i$. In particular, if $g$ is constant or piecewise constant, then
	\begin{eqnarray*}
		\Big| \Big( A^{\pm1}g - \mathcal{I}^0(A^{\pm1}g)\Big)(x) \Big| \leq \Big| \mathcal{I}^0(A^{\pm1}g) \Big|  M_1 \exp(M_1h_i) h_i,
	\end{eqnarray*}
	which implies that 
	\begin{eqnarray*}
		\frac{\mathcal{A}_- (\mathbbm{1})(x)}{\mathcal{A}_- (\mathbbm{1})(L)} - \frac{\mathcal{A}_-^N (\mathbbm{1})(x)}{\mathcal{A}_-^N (\mathbbm{1})(L)} \leq 2M_1 \exp(M_1 h) h,
	\end{eqnarray*}
	and there exists a uniform constant C, such that for any $x \in [0,L]$,
	\begin{eqnarray*}
	\abs{\Big( \mathcal{A}_-^N \circ  \mathcal{A}_+^N \Big)(f)(x) - \Big( \mathcal{A}_- \circ  \mathcal{A}_+ \Big)(f)(x)} \leq Ch.
	\end{eqnarray*}
	Then the proof can be completed by the triangular inequality and the two error estimates above.
\end{proof}

\begin{proof}[Proof of Theorem \ref{thm:1dadvecnetworkerr}]
Basically, we follow the idea of proof of Theorem \ref{thm:burgersoprtest1}. The key step is to figure out the degree ($r$), number of variables ($d$) and number of terms ($s$) of $\cG^f_m$'s numerator and denominator, respectively, in order to apply Theorem \ref{thm:rational-apprx-ReLU} to determine the size of network $g^{\cN}_f$. Define $V_m:=[v_1,\cdots, v_m]$, where 
\begin{eqnarray*}
v_0 =1, \quad v_i = \exp(a_i h_i), \quad \hbox{given that} \ h_i = x_i - x_{i-1}, \quad i=1,\cdots,m,
\end{eqnarray*}
whence $A(x_j) = \prod_{k=0}^j \frac{1}{v_k}$ and $A^{-1}(x_j) = \prod_{k=0}^j v_k$. Notice that, for any $J=1,\cdots,m$,
\begin{eqnarray*}
\mathcal{A}^N_- (\mathbbm{1})(x_J) = \sum_{j=1}^J A^{-1}(x_j) f_j h_j = \sum_{j=1}^J h_j f_j \Big( \prod_{k=0}^j v_k\Big),
\end{eqnarray*}
which is a $J$-th degree $J$-variable ($\{v_1,\cdots, v_m\}$) polynomial with $J$ terms, and 
\begin{eqnarray*}
\Big( \mathcal{A}_-^N \circ  \mathcal{A}_+^N \Big)(f)(x_J) = \sum_{i=1}^J \frac{h_i}{A(x_i)} \cA^N_+(f)(x_i) = \sum_{i=1}^J \frac{h_i}{A(x_i)} \Big(\sum_{j=1}^i A(x_j) f_j h_j \Big) = \sum_{i=1}^J \sum_{j=1}^i h_i h_j f_j \frac{A(x_j)}{A(x_i)},
\end{eqnarray*}
where $\frac{A(x_j)}{A(x_i)}=\prod_{k=j+1}^i v_k$ for $0\leq j< i \leq m$, and $\frac{A(x_j)}{A(x_i)} = 1$ for $0\leq i=j\leq m$, which implies that $\Big( \mathcal{A}_-^N \circ  \mathcal{A}_+^N \Big)(f)(x_J)$ is a $J$-th degree $J$-variable ($\{v_1,\cdots, v_m\}$) polynomial with $J(J+1)/2$ terms. If, for any $J=1,\cdots,m$, we rewrite \eqref{ellipticrationaldef} as
\begin{eqnarray*}
\cG^f_m(\mathbf{a}_m;x) = \tilde{\cG}^f_m(V_m;x):=\frac{\Big( \mathcal{A}^N_- (\mathbbm{1})(x)\Big) \Big( ( \mathcal{A}_-^N \circ  \mathcal{A}_+^N )(f)(x_m)\Big) - \Big( \mathcal{A}^N_- (\mathbbm{1})(x_m )\Big) \Big( ( \mathcal{A}_-^N \circ  \mathcal{A}_+^N \Big)(f)(x)\Big)}{\mathcal{A}^N_- (\mathbbm{1})(x_m)}.
\end{eqnarray*}
The numerator of $\tilde{\cG}^f_m(V_m;x)$ is a $2m$-th degree $m$-variable polynomial with at most $\mathcal{O}(m^3)$ terms. The rest of this proof is similar to the proof of Theorem \ref{thm:burgersoprtest1}.
\end{proof}

% -----------------------------------------------------------------------------------------------------------------
 
 \subsection{Proof of Theorem \ref{thm:2dreactionbranch}}
 \begin{proof}
Let $A \in \mathbb{R}^{m\times m}$, $b = [b_1,\cdots, b_m]^\top\in \mathbb{R}^m$. Denote $\| A\| =\max_{ij} |(A)_{ij}|$, $\| b\|_{\infty} = \max_{i} |b_i|$. 
Herein, we consider the error estimate $\| U_N - U^{\cN}\|_{\infty}$. First, we consider the error in each layer of the network:
\begin{eqnarray*}
\|T_k - T_k^{\cN}\| \leq \| G_k (h^2 a_k, T_{k-1}) - G_k(h^2 a_k, T_{k-1}^{\cN})\| + \|G_k(h^2 a_k, T_{k-1}^{\cN}) - \cN_{G_k}(h^2 a_k, T_{k-1}^{\cN})\|:= E_1^k+E_2^k,
\end{eqnarray*}
for any $k=1,2\cdots,n$. First, $E_1^k$ is estimated by
\begin{eqnarray*}
|E_1^k| &=&\abs{ \mathcal{R} \Big(h^2 a_k, (T_{k-1})_{ij}, (T_{k-1})_{kk}, (T_{k-1})_{ik}, (T_{k-1})_{kj}\Big) - \mathcal{R} \Big(h^2 a_k, (T_{k-1}^{\cN})_{ij}, (T_{k-1}^{\cN})_{kk}, (T_{k-1}^{\cN})_{ik}, (T_{k-1}^{\cN})_{kj}\Big)}\\
&\leq& \abs{ (T_{k-1})_{ij}-(T_{k-1}^{\cN})_{ij} } + \abs{\frac{h^2 a_k}{1+h^2 a_k (T_{k-1})_{kk}} (T_{k-1})_{ik} (T_{k-1})_{kj} - \frac{h^2 a_k}{1+h^2 a_k (T_{k-1}^{\cN})_{kk}} (T_{k-1}^{\cN})_{ik} (T_{k-1})_{kj}^{\cN}} \\
&\leq&  \Big(1+ \frac{\partial \mathcal{R}}{\partial x_3}\Big|_{(T_{k-1})_{kk}} + \frac{\partial \mathcal{R}}{\partial x_4}\Big|_{(T_{k-1})_{ik}} + \frac{\partial \mathcal{R}}{\partial x_5}\Big|_{(T_{k-1})_{kj}}
\Big) \| T_{k-1} - T_{k-1}^{\cN}\| \\
&\leq& \Big( 1+ \frac{h^4 a_k^2 (T_{k-1})_{ik} (T_{k-1})_{kj}}{(1+h^2 a_k (T_{k-1})_{kk})^2} + \frac{h^2 a_k [(T_{k-1})_{ik}+ (T_{k-1})_{kj}]}{(1+h^2 a_k (T_{k-1})_{kk})} 
\Big)\| T_{k-1} - T_{k-1}^{\cN}\|.
\end{eqnarray*}
Notice that there exists a uniform constant $C_1>0$ s.t. $\forall k$, $\| T_{k}\| \leq C_1$, because every $T_k$ is the inverse of $(S+\Lambda_k)$ for $a_k(x,y)$ that vanishes at all of $k$-th to $m$-th nodes. Hence, suppose $h<1$, then there exists a uniform constant $c =1+  h^4 C_0 C_1^2 + 2h^2 C_0 C_1 \leq 1+  C_0 C_1^2 + 2 C_0 C_1$, s.t.
$E_1^k \leq c \| T_{k-1} - T_{k-1}^{\cN} \|$. And $E_2^k$ is the network error given by \eqref{networkerror}. Notice that $E_1^1 = 0$. If we set $E_2^k = \varepsilon$ for all $k$, we have 
\begin{eqnarray*}
\| T_m - T_m^{\cN}\| \leq cm \varepsilon,
\end{eqnarray*}
which implies that
\begin{eqnarray*}
\| U_N - U^{\cN}\|_{l^\infty} = \| (T_m - T_m^{\cN})F \|_{l^\infty} \leq \Big( cm^2 \| F\|_{\infty}\Big)  \varepsilon = c\| F\|_{\infty}\ m^{-1},
\end{eqnarray*}
where we take $\varepsilon \sim m^{-3}$. The proof may be completed by the triangular inequality.
 
 \end{proof}
 
 %-----------------------------------------------------------------------------------------------------------------
 
 \subsection{Proof of Theorem \ref{thm:main_2dadvreac}}
 \begin{proof}
 Notice that the branch network only evaluates at the nodes given by the finite difference scheme, so $p=m$. On the other hand, $a_1$, $a_2$, $a_3$, $f\in L^{\infty}$ implies $u \in W^{2,2}$. Then, by Sobolev embedding theorem that $W^{1,2}(\Omega) \hookrightarrow L^{\alpha}(\Omega)$ for an arbitrary large number $\alpha$ when $d=2$, we can have
 \begin{eqnarray*}
 |u |_{W^{2,\alpha}} \leq \| \Delta u \|_{L^{\alpha}} \leq (\| a_1 \|_{L^{\infty}} + \| a_2 \|_{L^{\infty}}) \| \nabla u\|_{L^{\alpha} } + \|a_3 \|_{L^{\infty}} \| u \|_{L^\infty} + \| f\|_{L^{\infty}},
 \end{eqnarray*}
 which implies, for some small $\epsilon$ depending on $\alpha$,
 \begin{eqnarray*}
 \| u - \mathcal{I}^1_m u\|_{L^{\infty}} \leq C h^{2-\epsilon} | u |_{W^{2,\alpha}}.
 \end{eqnarray*}
Hence, due to the error of the trunk networks is 0, we have
\begin{eqnarray*}
\norm{\cG^f(a_1,a_2,a_3)-\cG_{\mathbb{N}}^F (\mathbf{a}_1^m, \mathbf{a}_2^m, \mathbf{a}_3^m) }_{L^{\infty}} &\leq& \| u  - \mathcal{I}_m^1 u \|_{L^{\infty}} + \| \mathcal{I}_{p}^1 u - \mathcal{I}_{p}^1 g^{\mathcal{N}} \|_{L^\infty} + \| \mathcal{I}_{m}^1 g^{\mathcal{N}} - \cG_{\bN}(\mathbf{u}_m )\|_{L^\infty}\\
&\leq& C\Big(m^{-1+\epsilon} + m^{-\frac{r}{2}} + \abs{N_{g^{\cN}} L_{g^{\cN}}}^{-\frac{1}{3}+\epsilon} \Big),
\end{eqnarray*}
 where $h^2 \sim m^{-1}$ and $\epsilon$ is arbitrary small.
 
 \end{proof}

%-----------------------------------------------------------------------------------------------------------------

%-----------------------------------------------------------------------------------------------------------------

\section*{Acknowledgment}
This material is based upon work supported by the Air Force Office of Scientific Research under award number FA9550-20-1-0056.
This work is also supported by the DOE PhILMs project (No. DE-SC0019453) and the DARPA-CompMods grant HR00112090062.

%------------------------------------------------------------
\iffalse
 \bibliography{main,nn,nnpde}
\bibliographystyle{siam}

\fi 
%----------------------------

% -----------------------------------------------------------
%%%%%%%%%%%%%%%%%%%%%%%%%%%%%%%%%%%

% -----------------------------------------------------------------------------------------------------------------
\appendix
%%%%%%%%%%%%%%%%%%%%%%%%%%%%%%%%%%%
\section{Approximation using Fourier expansion}

\begin{thm}\label{thm:bochner-riesz-fourier-error}
	Let $f\in L^q([-\pi, \pi)^d) $,  $1\leq q \leq \infty$. 
	For $d\geq 1$ and $\gamma>\gamma* =(d-1)\abs{\frac{1}{q}-\frac{1}{2}}$, 
	\begin{equation}
	\norm{ B_R^\gamma f -f}_{L^q} \leq C_{d,\gamma,q} \,\omega_2(f;R^{-1})_q.
	\end{equation}
	Here $C_{d,\gamma,q}$ is a positive  constant independent of $R$ and $f$.
\end{thm}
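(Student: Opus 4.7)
The plan is to reduce the question to a convolution estimate exploiting the radial (hence even) symmetry of the Bochner-Riesz kernel. Writing $B_R^\gamma f = K_R^\gamma * f$ where $\widehat{K_R^\gamma}(k) = (1-|k|^2/R^2)_+^\gamma$, we have $\int K_R^\gamma = \widehat{K_R^\gamma}(0) = 1$, so
\[
B_R^\gamma f(x) - f(x) = \int K_R^\gamma(y)\bigl[f(x-y) - f(x)\bigr]\,dy = \tfrac{1}{2}\int K_R^\gamma(y)\bigl[f(x+y)+f(x-y)-2f(x)\bigr]\,dy,
\]
the second equality following from the even symmetry $K_R^\gamma(-y) = K_R^\gamma(y)$ by averaging $y$ and $-y$. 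Minkowski's integral inequality then gives
\[
\|B_R^\gamma f - f\|_{L^q} \;\leq\; \tfrac{1}{2}\int |K_R^\gamma(y)|\,\omega_2(f,|y|)_q\,dy,
\]
and the elementary subadditivity bound $\omega_2(f,\lambda t)_q \leq (1+\lambda)^2\omega_2(f,t)_q$ applied with $t=R^{-1}$ reduces everything, after rescaling $y \mapsto y/R$, to the boundedness of $\int (1+|z|)^2 |K_1^\gamma(z)|\,dz$.

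The main obstacle is that this weighted $L^1$ integral only converges when $\gamma$ greatly exceeds the critical index $\gamma^*$ (essentially $\gamma > (d+3)/2$, using the decay $K_1^\gamma(z) \sim |z|^{-(d+1)/2-\gamma}$), which is far too restrictive. To descend to the sharp threshold $\gamma > \gamma^* = (d-1)|1/q - 1/2|$, I would invoke the deep classical fact (see \cite{LuYan-B13}) that $B_R^\gamma$ is uniformly bounded on $L^q$ in $R$ precisely when $\gamma > \gamma^*$, and then use a Jackson-type approximation scheme rather than a direct kernel estimate.

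Concretely, by Jackson's theorem on $\mathbb{T}^d$, choose a trigonometric polynomial $P$ of degree $\leq R$ with $\|f - P\|_{L^q} \leq C\,\omega_2(f, R^{-1})_q$, and split
\[
B_R^\gamma f - f \;=\; B_R^\gamma(f-P) \;-\; (f-P) \;+\; (B_R^\gamma P - P).
\]
The first two summands are each controlled by $C\,\omega_2(f,R^{-1})_q$ via the uniform operator bound on $B_R^\gamma$ and Jackson. The remaining piece $B_R^\gamma P - P$ is the Fourier multiplier with symbol $m(k/R) := (1-|k|^2/R^2)^\gamma - 1$ (supported on $|k|\le R$), which vanishes quadratically at $0$; writing $m(\xi) = |\xi|^2\,\psi(|\xi|^2)$ for a smooth function $\psi$ on $[0,1]$, or using the representation $1-(1-t)^\gamma = \gamma t\int_0^1 (1-st)^{\gamma-1}\,ds$, converts the polynomial remainder into a weighted average of $L^q$-bounded operators applied to $R^{-2}\Delta P$.

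The hardest step is precisely this polynomial-remainder estimate: one needs $\|B_R^\gamma P - P\|_{L^q} \leq C\,\omega_2(f,R^{-1})_q$ with a constant independent of $R$, which is where the sharp index $\gamma>\gamma^*$ really enters (through the uniform multiplier bound on the modified symbol $\psi(|k|^2/R^2)$, itself above the Bochner--Riesz critical index). Once this is in hand, the three pieces combine to give the stated Jackson-type estimate. The remaining bookkeeping, namely the dependence of the constant $C_{d,\gamma,q}$ only on $d,\gamma,q$ and not on $R$ or $f$, is automatic from the fact that every constant introduced above depends only on these parameters.
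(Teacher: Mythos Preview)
The paper does not actually prove this theorem: its ``proof'' is a one-line citation of Theorem~3.4.5 in \cite{LuYan-B13} for $d>1$, together with the remark that the same argument covers $d=1$ when $\gamma>0$. So you are doing strictly more than the paper by sketching an argument; your outline is the standard Jackson/K-functional route that underlies the cited result, and its overall shape is correct.

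Two points deserve care in your sketch. First, after factoring the remainder multiplier as $(1-|k|^2/R^2)^\gamma-1 = -(|k|^2/R^2)\,\psi(|k|^2/R^2)$ and reducing to an estimate on $R^{-2}\Delta P$, you still need $\|R^{-2}\Delta P\|_{L^q}\lesssim \omega_2(f,R^{-1})_q$. This does not follow from the Jackson inequality $\|f-P\|_{L^q}\lesssim\omega_2(f,R^{-1})_q$ alone; one either invokes the equivalence $\omega_2(f,t)_q\sim K_2(f,t^2):=\inf_g\bigl(\|f-g\|_{L^q}+t^2\|\Delta g\|_{L^q}\bigr)$ and works with a near-minimizer $g$ rather than a polynomial, or uses a simultaneous-approximation version of Jackson that controls $\|f-P\|_{L^q}$ and $R^{-2}\|\Delta P\|_{L^q}$ together. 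Second, the auxiliary multiplier $\psi(|\xi|^2)=\bigl[1-(1-|\xi|^2)_+^\gamma\bigr]/|\xi|^2$ still carries the Bochner--Riesz singularity of order $\gamma$ at $|\xi|=1$, so its $L^q$-boundedness is not a soft consequence of smoothness but again requires $\gamma>\gamma^*$; this is consistent with your remark, but worth stating explicitly rather than as a parenthetical.
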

\begin{proof}
	When $d>1$, this is exactly Theorem 3.4.5 of \cite{LuYan-B13}. 
	According to the proof of  Theorem 3.4.5 in  \cite{LuYan-B13},  the conclusion is  also valid 
	when $d=1$ as long as $\gamma>0$. %, $B_R^\alpha$ may be written as a convolution operators the convolution operators  then $p<\infty$
\end{proof}

\section{Convergence rates of deep ReLU networks for continuous functions}
\begin{thm}[\cite{ShenYZ2019}]
	\label{thm:complexity-ReLU-generic}
	Given $f\in C([0,1]^d)$, for any integers $L,N>0$, and $p\in [1,\infty]$, 
	there exists   a ReLU feedforward network  $f^{\mathcal{N}}$ with width $C_1\max\big\{d\lfloor N^{1/d}\rfloor,\, N+1\big\}$
	and depth $12L+C_2$
	such that 
	\begin{equation*}
	\|f-f^{\mathcal{N}}\|_{L^p([0,1]^d)}\le 19\sqrt{d}\,\omega(f,N^{-2/d}L^{-2/d})_\infty,
	\end{equation*}
	where $\omega(f,\cdot)$ is the modulus of continuity of $f$ defined via
	\begin{equation*}
	\omega(f,\delta)_\infty= \sup_{\abs{ z}\leq r}\norm{{f({\cdot+z})-f({\cdot})}}_{L^\infty}, {x},\,{y}\in [0,1]^d , 
	\end{equation*}  and $C_1=12$ and $C_2=14$ if $p\in [1,\infty)$;  $C_1=3^{d+3}$ and $C_2=14+2d$ if $p=\infty$. 
	%        where $C_1=\left\{\begin{aligned}
	%        &14,&p&\in [1,\infty), \\
	%        &3^{d+3},&p&=\infty,
	%        \end{aligned}\right.$ and $C_2=\left\{\begin{aligned}
	%        &13,&p&\in [1,\infty), \\
	%        &13+2d,&p&=\infty.
	%        \end{aligned}\right.$
\end{thm}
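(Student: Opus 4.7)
The plan is to reduce the theorem to realizing a piecewise constant interpolant on a sufficiently fine grid by a ReLU network, and then exploit the sawtooth/bit-extraction construction of Telgarsky to fit the required resolution inside the width-depth budget $(N,L)$. First I would choose an integer $K$ of order $(NL)^{2/d}$, partition $[0,1]^d$ into $K^d$ axis-aligned subcubes $\{Q_\alpha\}$ of side $1/K$, and define the piecewise constant interpolant $\tilde f(x)=f(\xi_\alpha)$ for $x\in Q_\alpha$, with $\xi_\alpha$ the center of $Q_\alpha$. By the definition of the modulus of continuity,
\[
\|f-\tilde f\|_{L^\infty([0,1]^d)}\le \omega(f,\sqrt d/K)_\infty\le C_d\,\omega(f,N^{-2/d}L^{-2/d})_\infty,
\]
so after absorbing constants it will be enough to realize $\tilde f$ exactly --- or, in the $L^p$ case with $p<\infty$, a function that agrees with $\tilde f$ off a set of arbitrarily small Lebesgue measure --- by a ReLU network of the prescribed size.

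I would build this network as the composition of an indexing module $\Phi_{\mathrm{idx}}\colon[0,1]^d\to\{0,\ldots,K-1\}^d$ and a value module $\Phi_{\mathrm{val}}\colon\{0,\ldots,K-1\}^d\to\Real$. For $\Phi_{\mathrm{idx}}$ I would work coordinate by coordinate: a Telgarsky-style sawtooth with $2^L$ teeth is realized by a ReLU subnetwork of constant width and depth $O(L)$, and composing such a sawtooth with a shallow ``round-to-integer'' block extracts roughly $L$ significant bits of each coordinate. Running $O(d\lfloor N^{1/d}\rfloor)$ such coordinate-wise bit-extractors in parallel then produces the full integer multi-index with total width $O(d\lfloor N^{1/d}\rfloor)$ and depth $O(L)$. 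The value module $\Phi_{\mathrm{val}}$ stores the table $\{f(\xi_\alpha)\}_\alpha$: at most $K^d=(NL)^2$ distinct values are needed, and a depth-$L$ width-$N$ ReLU block can realize $\Omega(N\cdot 2^L)$ distinct outputs through a second bit-extraction, so the lookup fits inside width $O(N+1)$ and depth $O(L)$. A constant-depth assembly layer glues the two modules together and brings the totals to $12L+C_2$ for depth and $C_1\max\{d\lfloor N^{1/d}\rfloor,N+1\}$ for width.

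The final estimate combines the interpolation bound with the fact that the composed network agrees with $\tilde f$ exactly except on the union $B$ of subcube boundaries, whose $d$-dimensional Lebesgue measure is $O(d/K)$. On $B$ the network output is controlled in terms of $\|f\|_{L^\infty}$, so the $L^p$ contribution from $B$ is $O((d/K)^{1/p}\|f\|_{L^\infty})$, which can be absorbed into $C_d\,\omega(f,N^{-2/d}L^{-2/d})_\infty$ by a slight enlargement of the sawtooth resolution. The hard part will be the bit-extraction design: one must guarantee that the composed sawtooths are genuinely integer-valued on a set whose complement has arbitrarily small Lebesgue measure, while simultaneously meeting the \emph{exact} width budget $C_1\max\{d\lfloor N^{1/d}\rfloor,N+1\}$ and depth budget $12L+C_2$. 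Once that construction is in place, the remainder is routine bookkeeping with the triangle inequality and the modulus of continuity, producing the stated factor $19\sqrt d$.
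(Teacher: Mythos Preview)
The paper does not prove this theorem: it is quoted verbatim from \cite{ShenYZ2019} in the appendix and used as a black box in the proof of Theorem~\ref{thm:deep-o-nets-error}, so there is no in-paper argument to compare your proposal against.

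For what it is worth, your sketch is in the spirit of the original construction in \cite{ShenYZ2019}: piecewise constant approximation on a grid of side $K^{-1}$ with $K\asymp (NL)^{2/d}$, an indexing module built from iterated sawtooths (bit extraction), and a value/lookup module, with the $L^p$ vs.\ $L^\infty$ distinction handled by ignoring a thin boundary set. One place where your outline is loose is the capacity count for the value module: you write that a depth-$L$, width-$N$ ReLU block realizes $\Omega(N\cdot 2^{L})$ distinct outputs and that this suffices to store $K^d\asymp (NL)^2$ values, but for small $L$ (say $L=1$) one has $N\cdot 2^{L}\ll (NL)^2$, so a naive lookup does not fit. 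The actual argument in \cite{ShenYZ2019} uses a more delicate ``pointer'' encoding that packs the $K^d$ values into a single real parameter and then reads them out bit by bit; getting the exact constants $12$, $14$, $3^{d+3}$, $14+2d$ and the factor $19\sqrt{d}$ requires tracking that construction carefully rather than the generic bit-extraction you describe.
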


\begin{thm}[\cite{yarotsky2018optimal}]  For any continuous function
	Let $f:[0.1]^n\to \Real$ be with modulus of continuity $\omega_f$, there is a deep ReLU
	network $\tilde{f}$ with depth $L\leq c_0(n)$ and size $W$ such that 
	\begin{equation*}
	\norm{f-\tilde{f}}_{L^\infty}\leq c_1(n)\omega_f(c_2(n)W^{-2/n}), \quad \text{ where } c_0(n),\, c_1(n),\, c_2(n)>0. 
	\end{equation*}
\end{thm}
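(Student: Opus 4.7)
My plan is to mimic Yarotsky's approach to approximating continuous multivariate functions by very deep ReLU networks. The core idea is to first reduce to a discrete problem by fixing a uniform grid, then construct a ReLU realization of the lookup-table-like map that assigns values to grid cells, and finally smooth the result so that the approximation error tracks $\omega_f$.

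Specifically, I would: first, partition $[0,1]^n$ into $K^n$ sub-cubes of side $1/K$ and replace $f$ by the step function $f_K$ equal to the value of $f$ at the centre of each sub-cube, paying an error of at most $\omega_f(\sqrt{n}/K)$. Second, implement $f_K$ as a ReLU network by combining, coordinate by coordinate, \emph{bit-extraction} gadgets that identify the index of the sub-cube containing a given input $x$, followed by a compact look-up that emits the stored value. Third, absorb the remaining piecewise-constant discontinuities either by restricting attention to the interior of each cell or by blending through a ReLU partition of unity built from the standard ReLU approximation of $x \mapsto x^2$ (hence of multiplication). If the three pieces are assembled with shared sub-circuits and the bit-extraction keeps the depth logarithmic in the grid size, the final network should have size of order $K^{n/2}$ and depth of order $\log K$, so that choosing $K$ with $K^{n/2} \asymp W$ yields the announced rate $\omega_f(c_2(n) W^{-2/n})$.

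The main obstacle is the second step: reducing the lookup-table cost from the naive $O(K^n)$ to $O(W) = O(K^{n/2})$ while preserving the pointwise approximation. This requires heavy reuse of shared sub-networks along a binary-tree architecture using bit-extraction, together with a careful budgeting argument that shows the total parameter count and the depth respect, simultaneously, the stated bounds --- with logarithmic factors absorbed into the dimension-dependent constants $c_0(n), c_1(n), c_2(n)$. A secondary hurdle is making sure the blending step does not amplify the approximation error by an $n$-dependent factor; this is handled by choosing ReLU hat functions of slope $K$ whose supports overlap in a controlled way, and by verifying that the ReLU approximation of multiplication inherits uniform error bounds on bounded inputs. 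Once these two ingredients are in place, the triangle inequality combining the piecewise-constant error $\omega_f(\sqrt{n}/K)$ with the network-implementation error gives the claimed $L^\infty$ bound.
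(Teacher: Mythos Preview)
The paper does not prove this statement; it is simply quoted in the appendix as a result from \cite{yarotsky2018optimal}, with no accompanying argument. So there is nothing to compare against on the paper's side.

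Your sketch is broadly faithful to Yarotsky's actual construction: the bit-extraction mechanism is indeed the device that compresses the $K^n$-entry lookup table to size $O(K^{n/2})$, and the error $\omega_f(\sqrt{n}/K)$ from replacing $f$ by its grid-centre values is exactly the right first step. One substantive correction: the depth in Yarotsky's construction is \emph{not} logarithmic in $K$ but grows linearly with the network size $W$, because each bit extraction is inherently sequential rather than parallelizable across a binary tree. The very-deep regime is essential --- Yarotsky also proves that networks of bounded depth can achieve at best the rate $W^{-1/n}$, so a logarithmic-depth construction cannot deliver $W^{-2/n}$. (The statement as quoted in the paper, with depth bounded by a dimension-only constant $c_0(n)$, appears to be a misstatement of Yarotsky's theorem on this same point.) A secondary remark: the partition-of-unity smoothing via the ReLU approximation of $x\mapsto x^2$ that you describe is closer in spirit to Yarotsky's earlier 2017 construction for Sobolev functions; for merely continuous $f$ the 2018 argument handles the discontinuities of the step function more directly, but this is a detail that does not affect the overall rate.
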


%  Approximation of Continuous Functions using the Kolmogorov-Arnold Superposition Theorem

\begin{thm}[Theorem 3 in \cite{Sch20}]Let $p \in[1, \infty) $ and assume that there exists  a constant $C>0$ such that $\abs{f(\mathbf{x})- f(\mathbf{y})}\leq C\norm{ \mathbf{x}- \mathbf{y}}_{l^\infty} ^{\beta}$ with  $\beta \in[0,1]$,  for all $\mathbf{x}, \mathbf{y} \in[0,1]^{d}$.
	Then, there exists a deep  ReLU network $ \tilde{f}$ with
	$ K+3$ hidden layers, network architecture $\left(2 K+3,\left(d, 4 d, \ldots, 4 d, d, 1,2^{K d}+1,1\right)\right)$ and all network weights bounded in absolute value by $2\left(K \vee\|f\|_{\infty}\right) 2^{K(d \vee(p \beta))},$ such that
	$$
	\|f-\tilde{f}\|_{p} \leq\left(16 C+2\|f\|_{L^\infty}\right) 2^{-\beta K}.
	$$
	where $\tilde{f}(x_1,\ldots, xd)  = \tilde{g}(\sum_{k=1}^d3^{-k}\widetilde{\phi}_K(x_k))$ and 
	$\tilde{g} $ is a ReLU network with network architecture $\left(2,\left(  1,2^{K d}+1\right)\right)$
	and $\tilde{\phi}_K$ is a ReLU network with network architecture   $\left(2 K,\left(d, 4 d, \ldots, 4 d, d\right)\right)$.
	
%	Here a deep network has the following architecture  	$(L; (p_0,\cdots, p_{L+1}))$ if the number of hidden layers is $L$; and $p_0$,
%	$ p_j$ and $ p_{L+1}$ are the number of vertices (neurons) in the input layer, $j$-th hidden layer and output layer, respectively.
\end{thm}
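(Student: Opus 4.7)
The approach is to imitate the Kolmogorov--Arnold representation theorem at the level of ReLU networks: write $\tilde f = \tilde g \circ \Phi$, where $\Phi(x) = \sum_{k=1}^d 3^{-k}\tilde\phi_K(x_k)$ is an ``addressing map'' that encodes a dyadic cell of $[0,1]^d$ into a single real number, and $\tilde g:\mathbb{R}\to\mathbb{R}$ is a shallow network that stores a tabulated value of $f$ for each cell. The target network architecture in the statement exactly matches this decomposition: the first $2K$ layers of width $4d$ realise $\tilde\phi_K$, the single-neuron linear layer realises the weighted sum $\Phi$, and the two-layer block of width $2^{Kd}+1$ realises $\tilde g$.

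First I would construct $\tilde\phi_K:[0,1]^d\to[0,1]^d$ as a bit-extraction / quantisation network applied coordinatewise: $\tilde\phi_K(x_k)$ should equal the dyadic rational $j/2^K$ whenever $x_k \in [j/2^K,(j+1)/2^K)$. A standard sawtooth/hat construction uses $2$ ReLU layers per binary digit and $\mathcal{O}(d)$ width to process the $d$ coordinates in parallel while storing a running partial sum; iterating $K$ times yields the stated depth $2K$ and width $4d$. Next I would verify that $\Phi$ separates the $2^{Kd}$ dyadic cells: because $\tilde\phi_K(x_k)\in\{0,2^{-K},\dots,(2^K-1)2^{-K}\}$ and the weights $3^{-k}$ decay fast enough relative to $2^{-K}$ (equivalently, after rescaling one reads off each coordinate from base-$(2\cdot 3^d)$ digits of $\Phi$), distinct cell labels $(j_1,\dots,j_d)$ give distinct values of $\Phi$.

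Then I would build $\tilde g$ as the continuous piecewise linear interpolant
\[
\tilde g(z) \;=\; y_1 \;+\; \sum_{i=1}^{2^{Kd}} c_i\,\mathrm{ReLU}(z-z_i),
\]
where $z_1<\cdots<z_{2^{Kd}}$ are the values of $\Phi$ on cell representatives and $y_i$ is the value of $f$ at the corresponding representative; this uses exactly $2^{Kd}+1$ hidden ReLUs and has depth $2$. For $x\in[0,1]^d$, the composite $\tilde g(\Phi(x))$ returns (essentially) $f(\xi(x))$ where $\xi(x)$ is the cell representative, and $\|x-\xi(x)\|_{\ell^\infty}\le 2^{-K}$, so the Hölder assumption gives a pointwise error $\le C\,2^{-\beta K}$ on the ``interior'' of cells. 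The remaining contribution comes from the narrow transition strips between cells where $\tilde g$ is forced to linearly interpolate between two tabulated values; these strips have total measure $O(2^{-K})$ and contribute at most $2\|f\|_\infty$ there, yielding the overall $L^p$ bound $(16C+2\|f\|_\infty)2^{-\beta K}$ after integration.

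The main obstacle will be the quantitative weight bound $2(K\vee\|f\|_\infty)2^{K(d\vee p\beta)}$: the breakpoints $z_i$ of $\tilde g$ can be as close as $\sim 2^{-Kd}$, so the coefficients $c_i$ needed to produce jumps of size $\sim \|f\|_\infty$ are of order $\|f\|_\infty\cdot 2^{Kd}$, and this must be checked layer by layer for $\tilde\phi_K$ as well; one has to choose the transition strips and the cell-representative assignment carefully so that the piecewise linear $\tilde g$ is both Lipschitz bounded on each piece and consistent with the Hölder error. A secondary technicality is the separation proof for $\Phi$: one must either use base-$3$ with rescaling, or replace $3^{-k}$ by a coarser geometric factor and absorb the change into the constants. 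Once the addressing network $\tilde\phi_K$ and the separation of $\Phi$ are in place, the error estimate and the $L^p$ integration are direct from the Hölder hypothesis.
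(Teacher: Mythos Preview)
The paper does not prove this theorem: it is quoted in the appendix as an external result from \cite{Sch20}, with no argument given. There is therefore no ``paper's own proof'' to compare against; your sketch is essentially the strategy of the original Schmidt--Hieber proof.

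One genuine correction to your outline, however. The inner network $\tilde\phi_K$ is \emph{not} the plain dyadic quantiser $x_k\mapsto j/2^K$. With that choice the addressing map $\Phi(x)=\sum_k 3^{-k}\tilde\phi_K(x_k)$ fails to separate cells: consecutive quantised values in the $k$th coordinate differ by $3^{-k}2^{-K}$, while the tail $\sum_{k'>k}3^{-k'}\tilde\phi_K(x_{k'})$ can be as large as $3^{-k}/2$, so distinct $(j_1,\dots,j_d)$ collide. In Schmidt--Hieber's construction $\tilde\phi_K$ reads the first $K$ binary digits of $x_k$ and re-encodes them as a base-$3$ (Cantor-type) string, so that $\tilde\phi_K(x_k)\in\{\sum_{i=1}^K 2a_i 3^{-i}:a_i\in\{0,1\}\}$; then the weights $3^{-k}$ on the outer sum simply shift these digit blocks past one another and injectivity is immediate. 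This is not the ``secondary technicality'' you label it as---it is the reason the base $3$ appears at all, and it determines the width $4d$ (the $2K$ layers iterate the map $t\mapsto 3t\bmod 1$ while storing a partial Cantor sum, coordinatewise). Once $\tilde\phi_K$ is built this way, your description of $\tilde g$ as a one-hidden-layer piecewise-linear lookup on the $2^{Kd}$ addresses, the $L^p$ error splitting into a H\"older part on cell interiors and an $O(\|f\|_\infty)$ part on thin transition strips, and the weight-bound bookkeeping, are all on target.
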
 Here $C$ can depend on $d$, e.g. $\abs{f(x)-f(y)}\leq 
C_1\norm{x-y}_{l^2}^\beta \leq C_1d^\beta \norm{x-y}_{l^\infty}^\beta$. 
The theorem leads to a 
rate  of $2^{-K\beta}$ using of the order of $2^{Kd}$ network parameters-one obtains the order $(2^{Kd})^{-\frac{\beta}{d}}$. 
%
%{\color{red}(NEED to check how the rate and the size of the network are represented!!! Double check!!!)}

The following is from  \cite{OpsSZ2019} on convergence rate of ReLU networks for analytic functions.
\begin{thm}[Theorem 3.7 in \cite{OpsSZ2019}]\label{thm:conveg-rate-analytic-ReLU}
	Fix $d\in \mathbb{N}$. Assume that $u:[-1,1]^d \rightarrow \mathbb{R}$ can be analytically extended to Bernstein Ellipse $\mathcal{E}_{\mathbf{\rho}}$ with radius $\mathbf{\rho}=(\rho_j)_{j=1}^d$. Then, there exist constants $\beta' = \beta'(\mathbb{\rho},d)>0$ and $C = C(u,\mathbf{\rho},d)>0$, and for every $\mathcal{N}\in \mathbb{N}$ there exists a ReLU neural network $\tilde{u}_{\mathcal{N}}:[-1,1]^d\rightarrow \mathbb{R}$ satisfying
	\begin{eqnarray*}
		\hbox{size}(\tilde{u}_{\mathcal{N}}) \leq \mathcal{N}, \quad \hbox{depth}(\tilde{u}_{\mathcal{N}}) \leq C\mathcal{N}^{\frac{1}{d+1}} \log_2(\mathcal{N}),
	\end{eqnarray*}
	and the error bound
	\begin{eqnarray*}
		\| u(\cdot) - \tilde{u}_{\mathcal{N}}(\cdot)\|_{W^{1,\infty}([-1,1]^d)} \leq C \exp(-\beta' \mathcal{N}^{\frac{1}{1+d}}).
	\end{eqnarray*}
\end{thm}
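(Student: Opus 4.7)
The natural strategy is a two-stage approximation: first approximate the analytic target $u$ by a truncated multivariate Chebyshev polynomial, exploiting analyticity on $\mathcal{E}_\rho$ to obtain exponential convergence in the polynomial degree; then emulate that polynomial by a ReLU network, exploiting the fact that $x\mapsto x^2$ (and hence multiplication) admits a ReLU network approximation whose error decays exponentially in its depth. Writing $u(x)=\sum_{\nu\in\mathbb{N}_0^d} c_\nu T_\nu(x)$ with $T_\nu(x)=\prod_{j=1}^d T_{\nu_j}(x_j)$, the Bernstein ellipse hypothesis yields the anisotropic decay estimate $|c_\nu|\le C\prod_{j=1}^d \rho_j^{-\nu_j}$ via a standard contour shift to $\partial\mathcal{E}_\rho$. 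Truncating to a total-degree (or hyperbolic-cross) index set $\Lambda_N=\{\nu:\sum_j \nu_j\le N\}$ then produces a polynomial $u_N$ with $\|u-u_N\|_{W^{1,\infty}}\le C_1 e^{-\beta N}$ for some $\beta=\beta(\rho)>0$, and the cardinality satisfies $|\Lambda_N|=O(N^d)$.

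Next I would build the ReLU emulation $\tilde u_\mathcal{N}$ of $u_N$. The standard Yarotsky gadget gives a ReLU network $\mathrm{sq}_L$ of depth and size $O(L)$ with $\|\mathrm{sq}_L - (\cdot)^2\|_{L^\infty[0,1]}\le 2^{-2L}$; via $xy=\tfrac14((x+y)^2-(x-y)^2)$ this upgrades to a multiplication network $\mathrm{mult}_L$ with the same asymptotic cost and exponentially small error. I would then assemble each univariate Chebyshev polynomial $T_n$ via the three-term recurrence $T_{n+1}(x)=2x T_n(x)-T_{n-1}(x)$, using $O(n)$ instances of $\mathrm{mult}_L$; tensorising gives $T_\nu$. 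Summing against the coefficients $c_\nu$ for $\nu\in\Lambda_N$ with an outer linear combination produces $\tilde u_\mathcal{N}$. Propagating the $\mathrm{mult}_L$ error through the recurrence and the sum, and using $|c_\nu|\le C$, yields $\|u_N-\tilde u_\mathcal{N}\|_{W^{1,\infty}}\le C_2 |\Lambda_N|\cdot \mathrm{poly}(N)\cdot 2^{-L}$.

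The final step is a tuning argument. Setting $L\sim N$ (with a constant absorbing $\log\rho_j$) makes the emulation error $\lesssim e^{-\beta' N}$ and leaves the total error at $e^{-\beta'' N}$. The size bookkeeping gives roughly $\mathrm{size}(\tilde u_\mathcal{N})\lesssim |\Lambda_N|\cdot N\cdot L\sim N^{d+2}$ and $\mathrm{depth}(\tilde u_\mathcal{N})\lesssim N\cdot L\sim N^2$, which after rebalancing (e.g.\ parallelising multiplications within each $T_\nu$ via a binary tree to cut depth to $O(L\log N)$ per term, and sharing subcomputations across $\Lambda_N$) collapses to $\mathrm{size}\lesssim \mathcal{N}$ and $\mathrm{depth}\lesssim \mathcal{N}^{1/(d+1)}\log_2\mathcal{N}$ when one sets $N\sim \mathcal{N}^{1/(d+1)}$. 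Reading off the error then gives $\|u-\tilde u_\mathcal{N}\|_{W^{1,\infty}}\le C e^{-\beta'\mathcal{N}^{1/(1+d)}}$.

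The main obstacle is the third step: to match the precise exponents $\tfrac{1}{1+d}$ in both the depth bound and the error bound, the accounting of depth versus size must exploit sharing of subnetworks across the $O(N^d)$ Chebyshev modes and use the fact that the $W^{1,\infty}$ norm (not just $L^\infty$) is controlled; this requires care because differentiating $\tilde u_\mathcal{N}$ effectively amplifies the per-layer Lipschitz constants, forcing $L$ to be chosen slightly larger than the naïve balance to absorb $N$-polynomial factors inside the exponential. Once this balance is done correctly, the claimed rate follows.
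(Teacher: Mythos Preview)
The paper does not prove this statement at all: it is quoted verbatim as Theorem~3.7 of \cite{OpsSZ2019} and placed in the appendix purely for reference, so there is no ``paper's own proof'' to compare against. Your two-stage strategy (truncated tensor Chebyshev expansion with Bernstein-ellipse coefficient decay, followed by ReLU emulation of the polynomial via Yarotsky-type squaring/multiplication subnetworks and a final parameter balance $N\sim\mathcal{N}^{1/(d+1)}$) is indeed the approach taken in the cited source, and your identification of the delicate point---sharing subnetworks across modes and controlling the $W^{1,\infty}$ (not merely $L^\infty$) error so that the depth and size exponents line up---is accurate.
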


%%%%%%%%%%%%%%%%%%%%%%%%%%%%%%%%%%%%%%%%%%%%%%
  
 % -----------------------------------------------------------------------------------------------------------------
  % -----------------------------------------------------------------------------------------------------------------

\end{document}